\documentclass[reqno, a4paper, 11pt]{amsart}
\usepackage{graphicx} 

\usepackage{amsfonts}
\usepackage[dvipsnames]{xcolor}
\usepackage{amssymb}
\usepackage{amsthm}
\usepackage{subfig}
\usepackage{graphicx}
\usepackage{latexsym,amsmath}
\usepackage{afterpage}
\usepackage[T1]{fontenc}
\usepackage{epigraph}
\usepackage[shortlabels]{enumitem}
\usepackage{geometry}
\usepackage{orcidlink}
\usepackage{hyperref}
\hypersetup{colorlinks=true, citecolor=green, linkcolor=red}

\newcommand{\R}{\mathbb{R}}

\newcommand{\N}{\mathbb{N}}

\newcommand{\V}{\mathbb{V}}
\newcommand{\W}{\mathbb{W}}

\newcommand{\HB}{\mathbb{H}}
\newcommand{\LB}{\mathbb{L}}

\theoremstyle{plain}
\newtheorem{theorem}{Theorem}[section]
\newtheorem{lemma}[theorem]{Lemma}

\theoremstyle{definition}
\newtheorem{definition}[theorem]{Definition}

\newtheorem{algorithm}[theorem]{Algorithm}

\theoremstyle{remark}
\newtheorem{remark}[theorem]{Remark}

\numberwithin{equation}{section}

\newcommand{\lnorm}[2]{\left\|#1\right\|_{\LB^{#2}(\Omega)}}
\newcommand{\hnorm}[2]{\left\|#1\right\|_{\HB^{#2}(\Omega)}}
\newcommand{\lnorms}[2]{\left\|#1\right\|_{L^{#2}(\Omega)}}
\newcommand{\hnorms}[2]{\left\|#1\right\|_{H^{#2}(\Omega)}}
\newcommand{\wnorm}[3]{\left\|#1\right\|_{\W^{#2,#3}(\Omega)}}

\newcommand{\liprod}[2]{\left\langle#1,#2\right\rangle_{\LB^2(\Omega)}}
\newcommand{\liprods}[2]{\left\langle#1,#2\right\rangle_{L^2(\Omega)}}

\renewcommand{\vec}[1]{\boldsymbol{#1}}
\newcommand{\femvec}[3]{\vec{#1}_{#2}^{#3}}

\let\div\undefined
\DeclareMathOperator{\div}{div}
\DeclareMathOperator{\curl}{curl}

\newcommand{\pmat}[1]{\begin{pmatrix}#1\end{pmatrix}}

\newcommand{\lsim}{\lesssim}

\newcounter{Ax}

\allowdisplaybreaks

\newcommand{\comm}[1]{\textcolor{blue}{#1}}

\usepackage[normalem]{ulem}
\newcommand\tsout{\bgroup\markoverwith{\textcolor{red}{\rule[0.5ex]{2pt}{1.4pt}}}\ULon}
\newcommand{\stkout}[1]{\ifmmode\text{\tsout{\ensuremath{#1}}}\else\tsout{#1}\fi}

\newcommand{\stokesVelocityProj}{\Pi_{\mathrm{S}}}

\newcommand{\maxwellProj}{\Pi_{\mathrm{M}}}
\newcommand{\ellipticProj}{\Pi_{\mathrm{R}}}
\newcommand{\Lproj}{\Pi_\mathrm{L}}

\newcommand{\discreteDtau}{D_{\tau}}

\title[Error Estimates for a Linear Fully-Discrete FEM for the FMHD Model]{Error Estimates for a Linear Fully-Discrete Finite Element Method for the Ferromagnetic Magnetohydrodynamical Model}
\author{{Noah Vinod\orcidlink{0009-0008-4388-2329} and Thanh Tran\orcidlink{0000-0001-6117-4811}}}
\date{\today}
\address{School of Mathematics and Statistics, The University of New South Wales, Sydney 2052, Australia}
\email{\comm{n.vinod@unsw.edu.au}}
\address{School of Mathematics and Statistics, The University of New South Wales, Sydney 2052, Australia}
\email{\comm{thanh.tran@unsw.edu.au}}

\begin{document}

\begin{abstract}
The ferromagnetic magnetohydrodynamic equations are a non-linear PDE system that models the flow of an electrically conducting fluid that also has intrinsic magnetisation. In this paper, we propose a fully-discrete linear finite element scheme based on Euler's method to approximate the solutions to these equations and perform an error analysis for the scheme. Numerical experiments have been included to corroborate our theoretical results.
\end{abstract}

\maketitle

\tableofcontents

\section{Introduction}

In this paper, we estimate the error of a numerical scheme for the ferromagnetic magnetohydrodynamical model represented by the following system of PDEs
\begin{subequations} \label{eqn:fmhd}
    \begin{alignat}{2}
    &\partial_t \vec{v} + (\vec{v} \cdot \nabla) \vec{v} - \mu\Delta \vec{v} + \nabla p = \curl \vec{B} \times \vec{B} - (\vec{B} \cdot \nabla) \vec{m} + \nabla (\vec{m} \cdot \vec{B}) - \div \big(\nabla \vec{m} \odot \nabla \vec{m}\big), \label{eqn:fmhd-equations a} \\
    &\div \vec{v} = 0, \label{eqn:fmhd-equations b} \\
    &\partial_t \vec{B} + \eta \curl^2 \vec{B} - \curl(\vec{v} \times \vec{B}) = \vec{0}, \label{eqn:fmhd-equations c} \\
    &\partial_t \vec{m} + (\vec{v} \cdot \nabla) \vec{m} = \gamma \vec{m} \times (\Delta \vec{m} + \vec{B}) - \chi \vec{m} \times (\vec{m} \times (\Delta \vec{m} + \vec{B})), \label{eqn:fmhd-equations e} \\
    &|\vec{m}| = 1, \label{eqn:fmhd-equations f}
\end{alignat}
\end{subequations}
over the problem domain $(0,T) \times \Omega$, where $T > 0$, $\Omega$ is a convex polyhedral domain in $\R^3$ with a Lipschitz boundary $\partial\Omega$ and exterior unit normal vector $\vec{n}$. Here $P \odot Q = P^{\top} Q$ when $P$ and $Q$ are two matrices of appropriate sizes such that the matrix multiplication is well-defined. The vector fields $\vec{v}, \vec{B}, \vec{m} : [0,T] \times \Omega \to \R^3$ represent the velocity of the fluid, the magnetic field, and the magnetisation, respectively, while the scalar field $p : [0,T] \times \Omega \to \R$ represents the pressure of the fluid. The constants $\mu, \eta$ and $\chi$ are positive parameters related to the viscosity the fluid, the diffusivity of the magnetic field, and the damping of the magnetisation. The parameter $\gamma$ is a non-zero constant representing the electron's gyromagnetic ratio. This system is subject to the boundary conditions
\begin{subequations} \label{eqn:boundary-conditions}
    \begin{alignat}{3}
    \vec{v} &= \vec{0}, \label{eqn:boundary-conditions a}\\
    \vec{B} \cdot \vec{n} &= 0, \label{eqn:boundary-conditions b}\\
    \curl \vec{B} \times \vec{n} &= \vec{0}, \label{eqn:boundary-conditions c}\\
    \frac{\partial \vec{m}}{\partial \vec{n}} &= \vec{0}, \label{eqn:boundary-conditions d}
    \end{alignat}
\end{subequations}
on $[0,T] \times \partial\Omega$, along with the initial data
\begin{equation*}
    \vec{v}(0,\cdot) = \vec{v}_0, \qquad \vec{B}(0,\cdot) = \vec{B}_0, \qquad \vec{m}(0,\cdot) = \vec{m}_0.
\end{equation*}

The ferromagnetic magnetohydrodynamical model, first proposed by Lingam \cite{lingam2015-article}, is an MHD model that captures the dynamics of a magnetohydrodynamic fluid that is also magnetised. These fluids are electrically conducting, like the usual MHD fluid, but are also internally magnetised in the way permanent magnets are. Lingam accounted for the usual MHD dynamics that come from its electrical conductivity but also for the dynamics that come from its internal magnetisation. This model has potential applications in astrophysical and fusion plasmas, and most importantly in magnetic liquid metals \cite{kim2024-article, xiang2024-article}, which bears the exact properties of the theoretical ferrofluid that Lingam considers. The PDE model that this paper is based on considers the incompressible variant of Lingam's compressible model. The model also takes into account the exchange energy of the magnetisation. This modified model is introduced in \cite{vinod2026-article}, where the well-posedness of local-in-time strong solutions for that system is proved. Details of the derivation of \eqref{eqn:fmhd} can be found there as well.

To the best of our knowledge, no numerical method has been developed for the model \eqref{eqn:fmhd}, and we seek to undertake that enterprise in this paper. We recognise that \eqref{eqn:fmhd} is a combination of the MHD equations in conjunction with the Landau-Lifshitz equation. These two systems have been thoroughly studied separately in the past few decades from both a theoretical and numerical perspective. The literature is vast for the numerical study of the MHD equations and the Landau-Lifshitz equation, and a non-exhaustive list of results for both systems includes \cite{banas2010-article, gao2018-article, gao2023-article,goldys2026-arxiv, ravindran2016-article, wang2022-article, zhang2015-article} and \cite{alouges2008-article, an2021-article, bartels2008-article, bartels2006vol44-article, banas2008-article, le2013-article}, respectively.

We develop error estimates for our numerical scheme which is based on the finite element method and consists of solving a linear system at each time-step. In this numerical scheme, the finite element velocity $\vec{v}_h$ satisfies $\div \vec{v}_h = 0$ in the weak sense and the sphere constraint $|\vec{m}| = 1$ for the Landau-Lifshitz equation is achieved asymptotically.

The paper is organised as follows. In Section \ref{sec:preliminaries} we formulate an equivalent FMHD model \eqref{eqn:fmhd-equiv}, introduce our numerical method, state the main result, and present some useful results relevant to our analysis. The error equations are derived in Section \ref{sec:error-equations}, with further details of related estimates given in the appendix. These results are finally used in Section \ref{sec:theorem-proof} to prove Theorem \ref{thm:main-result}. Section \ref{sec:numerical-experiments} is dedicated to the discussion of a few numerical experiments that were conducted to confirm our theoretical results.


\section{Preliminaries and main results} \label{sec:preliminaries}

We first define the notation used in this paper. The set $\Omega \subset \R^3$ is a bounded convex polyhedral domain with a Lipschitz boundary. The space $\LB^p(\Omega)$ denotes the space of $p$-integrable functions on $\Omega$ taking values in $\R^d$, where the value of $n$ is determined from context. Throughout this paper, we use $\langle \cdot, \cdot\rangle$ to denote the $\LB^2(\Omega)$ inner product. Moreover, $\W^{m,p}(\Omega)$ denotes the usual Sobolev space of vector-valued functions, and $\HB^m(\Omega) := \W^{m,2}(\Omega)$. Furthermore, we define
\begin{align*}
    \HB_n^1(\Omega) &:= \{\vec{u} \in \HB^1(\Omega) : \vec{u} \cdot \vec{n} = 0 \text{ a.e. on } \partial\Omega\}, \\
    \HB_{n,\curl}^1(\Omega) &:= \{\vec{u} \in \HB^1(\Omega) : \vec{u} \cdot \vec{n} = 0 \text{ and } \curl \vec{u} \times \vec{n} = \vec{0} \text{ a.e. on } \partial\Omega\}
\end{align*}
Lastly, the spaces $L^p(0,T; X)$ and $W^{m,p}(0,T; X)$ denote the usual Bochner spaces of functions on $(0,T)$ taking values in a normed vector space $X$. Here we use the notation $\partial_t$ to refer to the time-derivative and $\partial_i$ as shorthand for $\partial_{x_i}$ to refer to the spatial derivatives; we also use $\partial_{\vec{n}}$ as shorthand for the directional derivative (see below). Moreover, for any vector-valued function $\vec{u} : \Omega \to \R^3$ we define
\begin{alignat*}{3}
    &\nabla \vec{u} : \Omega \to \R^{3 \times 3} & & \qquad \text{by} \qquad  \nabla \vec{u} := \pmat{\partial_1 \vec{u} & \partial_2 \vec{u} & \partial_3 \vec{u}} = \pmat{\partial_1 u_1 & \partial_2 u_1 & \partial_3 u_1 \\ \partial_1 u_2 & \partial_2 u_2 & \partial_3 u_2 \\ \partial_1 u_3 & \partial_2 u_3 & \partial_3 u_3}, \\[2ex]
    &\Delta \vec{u} : \Omega \to \R^3 & &\qquad \text{by} \qquad \Delta \vec{u} := \pmat{\Delta u_1 & \Delta u_2 & \Delta u_3}^{\top}, \\[2ex]
    &\displaystyle \frac{\partial \vec{u}}{\partial \vec{n}} : \partial\Omega \to \R^3 & &\qquad \text{by} \qquad \displaystyle\frac{\partial \vec{u}}{\partial \vec{n}} := (\nabla \vec{u}|_{\partial\Omega}) \vec{n},
\end{alignat*}
Throughout this paper we use the constant $C$ to denote an arbitrary positive constant.

\subsection{Weak formulation}
Without loss of generality, we set $\mu = \eta = \gamma = \chi = 1$ in \eqref{eqn:fmhd}. Using the elementary identities
\begin{align}
    \vec{u} \times (\vec{v} \times \vec{w}) &= (\vec{u} \cdot \vec{w}) \vec{v} - (\vec{u} \cdot \vec{v}) \vec{w}, \label{eqn:triple-cross-product} \\
    \Delta (|\vec{u}|^2) &= 2|\nabla \vec{u}|^2 + 2(\vec{u} \cdot \Delta \vec{u}), \\ \label{eqn:laplace-magnitude}
    \div \big( \nabla \vec{u} \odot \nabla \vec{u} \big) &= \nabla \vec{u} \odot \Delta \vec{u} + \frac{1}{2} \nabla |\nabla \vec{u}|^2,
\end{align}
we obtain the following problem equivalent to~\eqref{eqn:fmhd}
\begin{subequations} \label{eqn:fmhd-equiv}
    \begin{alignat}{2}
    &\partial_t \vec{v} + (\vec{v} \cdot \nabla) \vec{v} -  \Delta \vec{v} + \nabla \tilde{p} = \curl \vec{B} \times \vec{B} - (\vec{B} \cdot \nabla) \vec{m} - \nabla \vec{m} \odot \Delta \vec{m}, \label{eq:fmhd-equiv a} \\
    &\div \vec{v} = 0, \label{eq:fmhd-equiv b} \\
    &\partial_t \vec{B} +  \curl^2 \vec{B}  - \curl(\vec{v} \times \vec{B}) = \vec{0}, \label{eq:fmhd-equiv c} \\
    &\partial_t \vec{m} + (\vec{v} \cdot \nabla) \vec{m} -  \Delta \vec{m} =  \vec{m} \times (\Delta \vec{m} + \vec{B}) +  |\nabla \vec{m}|^2 \vec{m} -  \vec{m} \times (\vec{m} \times \vec{B}), \label{eq:fmhd-equiv e} \\
    &|\vec{m}| = 1, \label{eq:fmhd-equiv f}
    \end{alignat}
\end{subequations}
where $\tilde{p}$ is the modified pressure defined by
\begin{equation} \label{eqn:p-tilde}
    \tilde{p} := p - \vec{m} \cdot \vec{B} + \frac{1}{2} |\nabla \vec{m}|^2.
\end{equation}

Formally, the weak formulation in Definition \ref{def:weak-solution} is straightforward to obtain. We multiply equations \eqref{eq:fmhd-equiv a}, \eqref{eq:fmhd-equiv b}, \eqref{eq:fmhd-equiv c} and \eqref{eq:fmhd-equiv e} by the test functions $\vec{\varphi}, q, \vec{\omega}$ and $\vec{\xi}$, respectively, and then integrate by parts. The regularity given in the definition renders all terms well-defined. Additionally, note that we also make use of the integration by parts formula \cite[Theorem 3.29]{monk2003-book}
\begin{equation*}
    \liprod{\curl \vec{u}}{\vec{v}} = \liprod{\vec{u}}{\curl \vec{v}} + \langle \vec{u} \times \vec{n}, \vec{v} \rangle_{\LB^2(\partial\Omega)},
\end{equation*}
and Lemma~\ref{lem:convective} in Section \ref{sec:useful-results}.



\begin{definition}[Weak Solution] \label{def:weak-solution}
Given $T > 0$ and $(\vec{v}_0, \vec{B}_0, \vec{m}_0) \in \HB_0^1(\Omega) \times \HB_n^1(\Omega) \times \HB^2(\Omega)$, the functions $(\vec{v}, \tilde{p}, \vec{B}, \vec{m})$ are said to be a weak solution to \eqref{eqn:fmhd-equiv} if
\begin{enumerate}[i)]
    \item The velocity $\vec{v}$, pressure $\tilde{p}$, magnetic field $\vec{B}$ and magnetisation $\vec{m}$ have the regularity
    \begin{align*}
        \vec{v} &\in H^1(0,T; \HB_0^1(\Omega)), \\
        \tilde{p} &\in L^{\infty}(0,T; \LB^2(\Omega)), \\
        \vec{B} &\in H^1(0,T; \HB_n^1(\Omega)), \\
        \vec{m} &\in H^1(0,T; \HB^2(\Omega)).
    \end{align*}

    \item The magnetisation $\vec{m}$ has magnitude
    \begin{equation*}
        |\vec{m}| = 1 \qquad \text{a.e. on } [0,T] \times \Omega.
    \end{equation*}

    \item The quadruplet $(\vec{v}, \tilde{p}, \vec{B}, \vec{m})$ satisfies 
    \begin{align*}
        &\liprod{\partial_t \vec{v}}{\boldsymbol{\varphi}}  +  \liprod{\nabla \vec{v}}{\nabla \boldsymbol{\varphi}} + \frac{1}{2}\liprod{(\vec{v} \cdot \nabla) \vec{v}}{\boldsymbol{\varphi}} - \frac{1}{2} \liprod{(\vec{v} \cdot \nabla) \boldsymbol{\varphi}}{\vec{v}} - \liprod{\curl \vec{B} \times \vec{B}}{\boldsymbol{\varphi}}  \\
        &\qquad + \liprod{(\vec{B} \cdot \nabla) \vec{m}}{\boldsymbol{\varphi}} + \liprod{\nabla \vec{m} \odot \Delta \vec{m}}{\vec{\varphi}} - \liprod{\tilde{p}}{\div \boldsymbol{\varphi}} = 0, \\[2ex]
        &\liprods{\div \vec{v}}{q} = 0, \\[2ex]
        &\liprod{\partial_t \vec{B}}{\boldsymbol{\omega}} +  \liprod{\curl \vec{B}}{\curl \boldsymbol{\omega}}  - \liprod{\vec{v} \times \vec{B}}{\curl \boldsymbol{\omega}} = 0, \\[2ex]
        &\liprod{\partial_t \vec{m}}{\boldsymbol{\xi}} +  \liprod{\nabla \vec{m}}{\nabla \boldsymbol{\xi}} + \liprod{(\vec{v} \cdot \nabla) \vec{m}}{\boldsymbol{\varphi}} -  \liprod{|\nabla \vec{m}|^2 \vec{m}}{\boldsymbol{\xi}} \\
        &\qquad -  \liprod{\vec{m} \times \Delta \vec{m}}{\boldsymbol{\xi}} -  \liprod{\vec{m} \times \vec{B}}{\boldsymbol{\xi}} -  \liprod{\vec{m} \times (\vec{m} \times \vec{B})}{\boldsymbol{\xi}} = 0,
    \end{align*}
    for all $(\vec{\varphi}, q, \vec{\omega}, \vec{\xi}) \in \HB^1_0(\Omega) \times \LB^2(\Omega) \times \HB_n^1(\Omega) \times \HB^1(\Omega)$ and $t \in [0,T]$.
\end{enumerate}
\end{definition}

\subsection{Finite element spaces and discrete operators} \label{sec:fem-spaces}

Let $\mathcal{T}_h = \{K_j\}_{j=1}^M$ be a globally quasi-uniform tetrahedral partition of $\Omega$ with mesh size $h = \max \{\mathrm{diam}(K_j)\}_{j=1}^M$, where $K_j$ represents a tetrahedral element. For any $m\in\N$ and $h>0$, we define the finite element space $\V_h^m \subset \HB^1(\Omega)$ to be the space
\begin{equation} \label{eqn:finite-element-space}
    \V_h^m(\Omega) = \{\vec{u}_h \in C(\overline{\Omega}) : \vec{u}_h|_{K_j} \in P_m(K_j), \text{ for all } K_j \in \mathcal{T}_h\},
\end{equation}
where $P_m(K_j)$ represents the set of polynomials of degree less than or equal to $m$ restricted to the element $K_j \in \mathcal{T}_h$. Let $\ell, k, r \in \N$ denote the polynomial degrees of the velocity, magnetic field, and magnetisation finite element spaces, respectively, and define
\begin{align*}
    L_0^2(\Omega) = \left\{u \in L^2(\Omega) : \int_{\Omega} u \; d\vec{x} = 0\right\}.
\end{align*}
Hence, we define the following finite element spaces for the velocity, pressure, magnetic field and magnetisation, respectively:
\begin{align*}
    \V_{\vec{v}} := \V_h^{\ell}(\Omega) \cap \HB_0^1(\Omega), \qquad V_{p} := V_h^{\ell-1}(\Omega) \cap L_0^2(\Omega), \qquad \V_{\vec{B}} := \V_h^k(\Omega) \cap \HB_n^1(\Omega), \qquad \V_{\vec{m}} := \V_h^r(\Omega).
\end{align*}
We note that the pair $(\V_{\vec{v}}, V_p)$ is the usual spaces of Taylor-Hood elements if $\ell \geq 2$.

Our numerical method makes use of the following discrete operators. Firstly, the discrete Laplacian operator $\Delta_h : \V_{\vec{m}} \to \V_{\vec{m}}$ is defined by
\begin{equation} \label{eqn:discrete-laplacian}
    \liprod{\Delta_h \vec{u}_h}{\vec{\chi}_h} = - \liprod{\nabla \vec{u}_h}{\nabla \vec{\chi}_h}, \qquad \forall \vec{u}_h, \vec{\chi}_h \in \V_{\vec{m}}.
\end{equation}
The usual $\LB^2$ projection $\Lproj : \LB^2(\Omega) \to \V_{\vec{m}}$ and Ritz projection $\ellipticProj : \HB^1(\Omega) \to \V_{\vec{m}}$ are defined respectively by
\begin{equation*}
    \liprod{\Lproj \vec{u} - \vec{u}}{\vec{\xi}_h} = 0, \qquad \forall \vec{u} \in \LB^2(\Omega), \quad \forall \vec{\xi}_h \in \V_{\vec{m}},
\end{equation*}
and
\begin{equation*}
    \liprod{\nabla(\ellipticProj \vec{u} - \vec{u})}{\nabla \vec{\xi}_h} = 0 \quad \text{subject to} \quad \liprod{\ellipticProj \vec{u} - \vec{u}}{\vec{1}} = 0, \qquad \forall \vec{u} \in \HB^1(\Omega), \quad \forall \vec{\xi}_h \in \V_{\vec{m}}.
\end{equation*}
Next, we define the Stokes projection operator
\begin{equation*}
    \begin{aligned}
        \stokesVelocityProj : \HB_0^1(\Omega) \times L_0^2(\Omega) &\to \V_{\vec{v}} \times V_p \\
        (\vec{u},q) &\mapsto (\hat{\vec{u}}_h,\hat{q}_h)
    \end{aligned}
\end{equation*}
by
\begin{alignat}{2}
    &\liprod{\nabla(\hat{\vec{u}}_h - \vec{u})}{\nabla \vec{\psi}_h} - \liprod{\hat{q}_h - q}{\div \vec{\psi}_h} = 0,  && \qquad
    \forall\vec{\psi}_h \in \V_{\vec{v}}, \label{eqn:stokes-projection-velocity}\\
    &\liprod{\div (\hat{\vec{u}}_h - \vec{u})}{r_h} = 0,  && \qquad
    \forall r_h \in V_p. \label{eqn:stokes-projection-div-velocity}
\end{alignat}
Finally, the Maxwell projection $\maxwellProj : \HB_{n,\curl}^1(\Omega) \to \V_{\vec{B}}$ is defined for any $\vec{u}$ by
\begin{equation} \label{eqn:maxwell-projection}
    \liprod{\curl(\maxwellProj \vec{u} - \vec{u})}{\curl \vec{\omega}_h} + \liprod{\div(\maxwellProj \vec{u} - \vec{u})}{\div \vec{\omega}_h} = 0, \qquad \forall \vec{\omega}_h \in \V_{\vec{B}}.
\end{equation}

\subsection{Numerical method}
We now describe the numerical scheme for the problem \eqref{eqn:fmhd-equiv}. We partition the time interval $[0, T]$ into $N$ equally spaced intervals with time step size $\tau = T/N$ and define
\begin{equation*}
    t_n := n \tau, \quad \vec{u}^n := \vec{u}(t_n,\cdot), \quad \discreteDtau \vec{u}^n := \frac{\vec{u}^n - \vec{u}^{n-1}}{\tau}, \quad n = 1,\ldots,N,
\end{equation*}
for any function $\vec{u}$ continuous with respect to the time variable.

With the finite element spaces and projections defined in the previous subsection, our numerical scheme for problem \eqref{eqn:fmhd-equiv} reads as follows

\begin{algorithm}
	\label{alg:vBm}
	Let $h>0$ and $N\in\N$ be given. Define $\tau:=T/N$.
	\\
	\textbf{Input}: $h>0$, $N\in\N$,
	$\left(\vec{v}_0,\vec{B}_0,\vec{m}_0\right) \in
	\HB_0^1(\Omega)\times\HB_{n,\curl}^1(\Omega)\times\HB^1(\Omega)$.
	\\
	\textbf{Compute}: $\left(\vec{v}_h^0, \vec{B}_h^0, \vec{m}_h^0\right)
	\in \V_{\vec{v}} \times \V_{\vec{B}} \times \V_{\vec{m}}$ by
	\[
    		\left(\vec{v}_h^0, \vec{B}_h^0, \vec{m}_h^0\right) 
		:= 
		\left(\widehat{(\vec{v}_0)}_h, \maxwellProj \vec{B}_0, \ellipticProj \vec{m}_0\right).
	\]
	\\
	\textbf{For} $n=1$ to $N$ \textbf{do}: Find $\left(\femvec{v}{h}{n}, \femvec{B}{h}{n}, 
	\femvec{m}{h}{n}\right) \in \V_{\vec{v}} \times \V_{\vec{B}} \times \V_{\vec{m}}$ such that
	\begin{align}
    		&\liprod{\discreteDtau \femvec{v}{h}{n}}{\vec{\varphi}_h} 
		+
		 \liprod{\nabla \femvec{v}{h}{n}}{\nabla \vec{\varphi}_h} 
		+
		\frac{1}{2} \liprod{\left(\femvec{v}{h}{n-1} \cdot \nabla\right) 
		\femvec{v}{h}{n}}{\vec{\varphi}_h} 
		-
		\frac{1}{2} \liprod{\left(\femvec{v}{h}{n-1} \cdot \nabla\right) 
		\vec{\varphi}_h}{\femvec{v}{h}{n}} 
		\nonumber \\
    		&\qquad 
		- 
		\liprod{\curl \femvec{B}{h}{n} \times \femvec{B}{h}{n-1}}{\vec{\varphi}_h} 
		+
		\liprod{\left(\femvec{B}{h}{n-1} \cdot \nabla\right) \femvec{m}{h}{n}}{\vec{\varphi}_h} 
		+
		\liprod{\nabla \femvec{m}{h}{n-1}\odot \Delta_h \femvec{m}{h}{n}}{\vec{\varphi}_h} 
		\nonumber \\
    		&\qquad 
		-
		\liprod{\Tilde{p}_h^n}{\div \vec{\varphi}_h}
		= 0, 
		\qquad \forall\vec{\varphi}_h\in \V_{\vec{v}},
		\label{eqn:discrete-velocity-formulation} 
		\\[1ex]
    		&\liprod{\div \femvec{v}{h}{n}}{q_h} = 0, 
		\qquad \forall q_h\in V_p,
		\label{eqn:discrete-div-velocity-formulation} 
		\\[1ex]
    		&\liprod{\discreteDtau \femvec{B}{h}{n}}{\vec{\omega}_h} 
		+
		 \liprod{\curl \femvec{B}{h}{n}}{\curl \vec{\omega}_h} 
		+
		 \liprods{\div \femvec{B}{h}{n}}{\div \vec{\omega}_h} 
		\nonumber \\
    		&\qquad 
		-
		\liprod{\femvec{v}{h}{n} \times \femvec{B}{h}{n-1}}{\curl \vec{\omega}_h} 
		= 0, 
		\qquad \forall\vec{\omega}_h\in \V_{\vec{B}},
		\label{eqn:discrete-magnetic-field-formulation} 
		\\[1ex]
    		&\liprod{\discreteDtau \femvec{m}{h}{n}}{\vec{\xi}_h} 
		+
		 \liprod{\nabla \femvec{m}{h}{n}}{\nabla \vec{\xi}_h} 
		+
		\liprod{\left(\femvec{v}{h}{n} \cdot \nabla\right) \femvec{m}{h}{n-1}}{\vec{\xi}_h} 
		-
		 \liprod{\femvec{m}{h}{n-1} \times \Delta_h \femvec{m}{h}{n}}{\vec{\xi}_h} 
		\nonumber \\
    		&\qquad 
		- 
		 \liprod{\left(\nabla \femvec{m}{h}{n} \cdot \nabla \femvec{m}{h}{n-1}\right) 
		\femvec{m}{h}{n-1}}{\vec{\xi}_h} 
		-
		 \liprod{\femvec{m}{h}{n-1} \times \femvec{B}{h}{n}}{\vec{\xi}_h} 
		\nonumber \\
    		&\qquad 
		-
		 \liprod{\femvec{m}{h}{n-1} \times 
		\left(\femvec{m}{h}{n-1} \times \femvec{B}{h}{n}\right)}{\vec{\xi}_h} 
		= 0,
		\qquad \forall\vec{\xi}_h\in \V_{\vec{m}},
		\label{eqn:discrete-magnetisation-formulation} 
		\\[1ex]
    		&\liprod{\Delta_h \femvec{m}{h}{n}}{\vec{\chi}_h} 
		= 
		-
		\liprod{\nabla \femvec{m}{h}{n}}{\nabla \vec{\chi}_h},
		\qquad \forall\vec{\chi}_h\in \V_{\vec{m}},
	\end{align}
	\\
	\textbf{Output}: The sequence
	$\left\{\big(\vec{v}_h^n,\vec{B}_h^n,\vec{m}_h^n\big)\right\}_{n=0}^N$.
\end{algorithm}

Observe that, following \cite{wang2022-article}, we have added the stabilisation term $\liprods{\div \femvec{B}{h}{n}}{\div \vec{\omega}_h}$ to \eqref{eqn:discrete-magnetic-field-formulation}. This is consistent with \eqref{eqn:fmhd-equiv} and Definition \ref{def:weak-solution} provided that $\div \vec{B}_0 = 0$. Indeed, by taking the divergence of both sides of \eqref{eqn:fmhd-equiv} we see that $\div \vec{B} = 0$.

\subsection{Main results}
In the following sections we assume that the initial data has the regularity
\begin{equation} \label{eqn:initial-data-regularity}
    \begin{alignedat}{3}
        \vec{v}_0 &\in \HB^{\ell+1}(\Omega), &&\quad \ell \geq 2, \\
        \vec{B}_0 &\in \HB^{k+1}(\Omega), &&\quad k \geq 2, \\
        \vec{m}_0 &\in \W^{2,\infty}(\Omega) \cap \HB^{r+1}(\Omega), &&\quad r \geq 2,
    \end{alignedat}
\end{equation}
the exact solution to the system \eqref{eqn:fmhd} has regularity
\begin{align} \label{eqn:exact-solution-regularity}
    \begin{split}
        \vec{v} &\in C([0,T]; \HB^{\ell+1}(\Omega)), \\
        p &\in C([0,T]; \HB^{\ell}(\Omega)), \\
        \vec{B} &\in C([0,T]; \HB^{k+1}(\Omega)), \\
        \vec{m} &\in C([0,T]; \W^{2,\infty}(\Omega) \cap \HB^{r+1}(\Omega)), \\
        \partial_t \vec{v}, \partial_t \vec{B}, \partial_t \vec{m} &\in C([0,T]; \HB^1(\Omega)), \\
        \partial_t^2 \vec{v}, \partial_t^2 \vec{B}, \partial_t^2 \vec{m} &\in C([0,T]; \LB^2(\Omega)),
    \end{split}
\end{align}
and that it also satisfies the sphere constraint $|\vec{m}| = 1$ in $[0,T] \times \Omega$. 

The discrete solutions defined by Algorithm \ref{alg:vBm} satisfy the following error estimate.

\begin{theorem} \label{thm:main-result}
Let $\tau$ and $h$ be the temporal and spatial step sizes, respectively. There exist positive constants $\tau_0$ and $h_0$, such that if $\tau \leq \tau_0$, $h \leq h_0$, and 
\begin{equation} \label{eqn:tau-growth}
    \tau = O(h^{1+\beta})
\end{equation}
for an arbitrary $\beta \in (0,1/2)$ then the linear system \eqref{eqn:discrete-velocity-formulation}--\eqref{eqn:discrete-magnetisation-formulation} is well-posed assuming \eqref{eqn:initial-data-regularity} and \eqref{eqn:exact-solution-regularity} hold with $\ell, k, r \geq 2$. Furthermore, the finite element solutions $\femvec{v}{h}{n}, \femvec{B}{h}{n}$ and $\femvec{m}{h}{n}$ satisfy
\begin{equation} \label{eqn:error-estimate}
    \lnorm{\vec{v}^n - \femvec{v}{h}{n}}{2} + \lnorm{\vec{B}^n - \femvec{B}{h}{n}}{2} + \hnorm{\vec{m}^n - \femvec{m}{h}{n}}{1} \leq C \left(\tau + h^{\ell - \beta} + h^{k - \beta} + h^{r - \beta}\right),
\end{equation}
for all $n \in \{0,\ldots,N\}$, where $\tau = T/N$. Consequently, $\femvec{m}{h}{n}$ satisfies
\begin{equation*}
    \lnorm{1 - |\femvec{m}{h}{n}|^2}{2} \leq C \left( \tau + h^{\ell-\beta} + h^{k-\beta} + h^{r-\beta}\right),
\end{equation*}
implying the magnetisation vector $\femvec{m}{h}{n}$ satisfies the sphere constraint asymptotically.
\end{theorem}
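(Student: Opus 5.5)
We split the errors through the projections already introduced. Set $\hat{\vec{v}}^n := \widehat{(\vec{v}^n)}_h$ (the Stokes projection of $(\vec{v}^n,\tilde p^n)$), $\hat{\vec{B}}^n := \maxwellProj\vec{B}^n$ and $\hat{\vec{m}}^n := \ellipticProj\vec{m}^n$. Write
\begin{equation*}
\vec{v}^n-\femvec{v}{h}{n} = (\vec{v}^n-\hat{\vec{v}}^n) + \vec{e}_{\vec{v}}^n,
\end{equation*}
and similarly with $\vec{e}_{\vec{B}}^n$ and $\vec{e}_{\vec{m}}^n$. The projection parts are controlled by standard approximation estimates. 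They are of order $h^{\ell}$, $h^{k}$ and $h^{r}$ in $\LB^2$/$\HB^1$, and we also need $\W^{1,\infty}$-type bounds on $\hat{\vec{m}}^n$ and $\hat{\vec{B}}^n$, which use convexity of $\Omega$ and quasi-uniformity. Everything therefore reduces to bounding the discrete errors $\vec{e}^n$.

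\textbf{Error equations.} We subtract the weak formulation at $t_n$, tested with discrete functions, from \eqref{eqn:discrete-velocity-formulation}--\eqref{eqn:discrete-magnetisation-formulation}. The projection definitions cancel the principal linear terms, and in the $\vec{B}$ equation we use $\div\vec{B}=0$. The remaining consistency terms are of three kinds:
\begin{itemize}
\item $D_\tau\vec{u}^n-\partial_t\vec{u}^n = O(\tau)$, using $\partial_t^2\vec{u}\in C(\LB^2)$;
\item lagging errors $\vec{u}^n-\vec{u}^{n-1}=O(\tau)$;
\item $(D_\tau-\partial_t)$ applied to projection errors, which is $O(h^{s})$.
\end{itemize}
Because of the $\Delta_h$ terms we also need $\Delta_h\hat{\vec{m}}^n - \Lproj\Delta\vec{m}^n$, which is $O(h^{r-1})$ in $\LB^2$ by the Ritz definition.

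\textbf{Energy argument.} We test the error equations with
\begin{equation*}
\vec{\varphi}_h=\vec{e}_{\vec{v}}^n,\qquad \vec{\omega}_h=\vec{e}_{\vec{B}}^n,\qquad \vec{\xi}_h=\vec{e}_{\vec{m}}^n-\Delta_h\vec{e}_{\vec{m}}^n.
\end{equation*}
The last choice gives control of $\|\nabla\vec{e}_{\vec{m}}^n\|^2$ and $\tau\|\Delta_h\vec{e}_{\vec{m}}^n\|^2$, which is needed to absorb the term $\nabla\vec{m}\odot\Delta_h\vec{m}$ in the velocity equation. The scheme is built so that the leading coupling terms cancel after linearisation, mirroring the continuous energy identity. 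The Lorentz term $\curl\vec{B}\times\vec{B}$ cancels against $\vec{v}\times\vec{B}$ tested with $\curl$. Likewise $(\vec{B}\cdot\nabla)\vec{m}$ and $\nabla\vec{m}\odot\Delta_h\vec{m}$ pair with the convection term $(\vec{v}\cdot\nabla)\vec{m}$ tested with $\Delta_h\vec{e}_{\vec{m}}$. The skew form of the convection term also vanishes. All remaining nonlinear terms are split as (discrete error times bounded exact or projected quantity) plus (product of discrete errors). The first kind is handled by Cauchy--Schwarz and Young. The second kind needs $\LB^\infty$ or $\W^{1,\infty}$ bounds on the discrete solutions at level $n-1$.

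\textbf{Main obstacle and closing.} The main obstacle is the product-of-errors terms, such as
\begin{equation*}
(\nabla\vec{e}_{\vec{m}}^{n}\cdot\nabla\vec{e}_{\vec{m}}^{n-1})\,\femvec{m}{h}{n-1}
\qquad\text{and}\qquad
\femvec{m}{h}{n-1}\times\Delta_h\vec{e}_{\vec{m}}^n.
\end{equation*}
These require a uniform bound on $\|\nabla\femvec{m}{h}{n-1}\|_{\LB^\infty}$ and $\|\femvec{B}{h}{n-1}\|_{\LB^\infty}$. We handle them by strong induction on $n$. Assume the error bound \eqref{eqn:error-estimate} holds up to step $n-1$. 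The inverse estimate gives
\begin{equation*}
\|\nabla\vec{e}_{\vec{m}}^{n-1}\|_{\LB^\infty}\le Ch^{-3/2}\|\nabla\vec{e}_{\vec{m}}^{n-1}\|_{\LB^2}\le Ch^{-3/2}\big(\tau+h^{2-\beta}\big).
\end{equation*}
This is small precisely when $\tau=O(h^{1+\beta})$ with $\beta<1/2$ and $h\le h_0$; this explains the restriction \eqref{eqn:tau-growth} and the loss of $h^{\beta}$. We expect this balancing to be the delicate step, and would first try the inverse estimate $\LB^\infty\lesssim h^{-3/2}\LB^2$ combined with the $\tau\|\Delta_h\vec{e}\|^2$ dissipation. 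The induction hypothesis then yields the needed $\W^{1,\infty}$ bounds, so the induction closes.

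Summing over $n$ gives an inequality of the form
\begin{equation*}
(1-C\tau)\,E^n\le E^{n-1}+C\tau E^{n-1}+C\tau\,(\tau+h^{\ell-\beta}+\cdots)^2,
\end{equation*}
where $E^n$ denotes the discrete error energy at step $n$. The discrete Gronwall lemma, valid for $\tau\le\tau_0$, then gives \eqref{eqn:error-estimate}. Well-posedness of each linear step follows by the same energy argument applied to the homogeneous system, which gives coercivity for small $\tau$ via Lax--Milgram in finite dimensions.

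Finally, since $|\vec{m}^n|=1$ we have
\begin{equation*}
1-|\femvec{m}{h}{n}|^2=(\vec{m}^n-\femvec{m}{h}{n})\cdot(\vec{m}^n+\femvec{m}{h}{n}).
\end{equation*}
Hölder's inequality with the Sobolev embedding $\HB^1\subset\LB^4$, together with the uniform $\HB^1$ bound on $\femvec{m}{h}{n}$, gives the stated sphere-constraint estimate.
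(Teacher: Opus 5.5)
The induction does not close as you set it up. Your displayed bound
\[
\lnorm{\nabla \vec{e}_{\vec{m}}^{n-1}}{\infty} \le C h^{-3/2}\big(\tau + h^{2-\beta}\big)
\]
is not small under \eqref{eqn:tau-growth}. Its $\tau$-part is of size $h^{-3/2}h^{1+\beta} = h^{\beta-1/2}$, which blows up as $h\to 0$ precisely because $\beta<1/2$; smallness would require $\tau = o(h^{3/2})$. The same $h^{-3/2}$ route fails for $\lnorm{\femvec{B}{h}{n-1}}{\infty}$. Moreover, your induction hypothesis (only \eqref{eqn:error-estimate} at earlier steps) does not record the dissipation you propose to combine it with.

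The missing idea, which is what the paper does, is to avoid $\W^{1,\infty}$ altogether. It suffices to know $\lnorm{\vec{\delta}_{\vec{m}}^{n-1}}{\infty} + \wnorm{\vec{\delta}_{\vec{m}}^{n-1}}{1}{3} \le C h^{1/2}$. This follows from the $h^{-1/2}$ inverse estimate \eqref{eqn:inverse-estimates} applied to $\hnorm{\vec{e}_{\vec{m}}^{n-1}}{1} \lesssim \tau + h^{2-\beta}$, which gives $h^{1/2+\beta} + h^{3/2-\beta}$; this is exactly where $\tau = O(h^{1+\beta})$ is used. The product-of-error terms are then treated as follows: write $\femvec{m}{h}{n-1} = \vec{m}^{n-1} - \vec{\delta}_{\vec{m}}^{n-1}$, apply H\"older in $\LB^6\times\LB^3\times\LB^2$, and use the discrete estimate $\lnorm{\nabla \vec{u}_h}{6} \lesssim \lnorm{\Delta_h \vec{u}_h}{2}$. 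They become $C h^{1/2}\tau\lnorm{\Delta_h \vec{e}_{\vec{m}}^{n}}{2}^2$ and are absorbed into the dissipation for $h\le h_0$.

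For $\femvec{v}{h}{n-1}$ and $\femvec{B}{h}{n-1}$, the paper gets $\LB^\infty$ bounds by carrying $\tau\sum_j Q^j$ in the induction hypothesis, which yields $\HB^1$ control of $\vec{e}_{\vec{v}}^{n-1}$ and $\vec{e}_{\vec{B}}^{n-1}$. With the coupling cancellations you exploit (which the paper does not use), $\LB^3$ bounds may suffice instead. These are obtainable from your hypothesis via $\lnorm{\vec{u}_h}{3}\lesssim h^{-1/2}\lnorm{\vec{u}_h}{2}$.

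Second, your consistency bound for the time-differenced projection errors is not available under the stated hypotheses, and this is where the $h^{-\beta}$ actually comes from. Bounding $\discreteDtau\vec{E}^n$ by $O(h^{s})$ with $s\ge 2$ needs $\partial_t\vec{u}\in\HB^{s}(\Omega)$, and for the Stokes projection also time regularity of $\tilde p$. Only $\partial_t\vec{u}\in C([0,T];\HB^1(\Omega))$ is assumed, so along your route these terms are at best $O(h)$ and the claimed rates are lost.

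The paper never differences the projection error. It expands $\tau\discreteDtau\vec{\delta}^i = \vec{E}^i + \vec{e}^i - \vec{\delta}^{i-1}$ and bounds $|\langle \vec{E}^i, \vec{e}^i\rangle| \le C\tau^{-1}h^{2(s+1)} + C\tau\lnorm{\vec{e}^i}{2}^2$ for $s\in\{\ell,k,r\}$. After summation this produces the term $\tau^{-2}h^{2(s+1)}$ in $R$, which for $\tau\sim h^{1+\beta}$ equals $h^{2(s-\beta)}$. That, not the inverse-estimate balancing, is the source of the loss $h^{-\beta}$ in \eqref{eqn:error-estimate}.
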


\begin{remark}
We note that our assumption \eqref{eqn:tau-growth} is stronger than that of \cite{an2021-article}. The authors of \cite{an2021-article} assume $\tau = O(\varepsilon_0 h)$ and base their arguments on having $\varepsilon_0$ sufficiently small. There is a flaw in their arguments because the big-O constant $C$ goes to infinity when $\varepsilon_0 \to 0$. Indeed, in the derivation of equations (3.27) and (3.28) in \cite{an2021-article}, where it is required that $C h_0^{r-1/2} + C \hat{C}_0 \varepsilon_0^{1/2} \leq 1/2$, it is necessary that the constant $C$ stays constant while $\varepsilon_0 \to 0$. This is exactly our assumption \eqref{eqn:tau-growth}, i.e., $\varepsilon_0 = h^{\beta}$. In other words, the error estimate given in \cite[Theorem 2.3]{an2021-article} is suboptimal. Furthermore, we relax the smooth boundary assumption of \cite{an2021-article} but get a slightly worse estimate in the process.
\end{remark}

\section{Error equations} \label{sec:error-equations}
For the sake of notational simplicity in the analysis, define
\begin{equation}
\label{equ:error split}
\left\{
\begin{aligned}
    \vec{\delta}^n_{\vec{v}} &:= \vec{v}^{n} - \vec{v}^{n}_h = \left(\vec{v}^{n} - \hat{\vec{v}}_h^{n}\right) + \left(\hat{\vec{v}}_h^{n} - \vec{v}^{n}_h\right) =: \vec{E}^n_{\vec{v}} + \vec{e}^n_{\vec{v}} \\
    \delta_p^n &:= \tilde{p}^n - \tilde{p}_h^n = \left(\tilde{p}^n - \hat{\tilde{p}}_h^n\right) + \left(\hat{\tilde{p}}_h^n - \tilde{p}_h^n\right) =: E_p^n + e_p^n \\
    \vec{\delta}^n_{\vec{B}} &:= \vec{B}^{n} - \vec{B}^{n}_h = \left(\vec{B}^{n} - \maxwellProj \vec{B}^{n}\right) + \left(\maxwellProj \vec{B}^{n}_h - \vec{B}^{n}_h\right) =: \vec{E}^n_{\vec{B}} + \vec{e}^n_{\vec{B}} \\
    \vec{\delta}^n_{\vec{m}} &:= \vec{m}^{n} - \vec{m}^{n}_h = \left(\vec{m}^{n} - \ellipticProj \vec{m}^{n}\right) + \left(\ellipticProj \vec{m}^{n} - \vec{m}^{n}_h\right) =: \vec{E}^n_{\vec{m}} + \vec{e}^n_{\vec{m}}.
\end{aligned}
\right.
\end{equation}
Additionally, we abuse notation to also define

\begin{equation*}
     \Delta_h \femvec{\delta}{\vec{m}}{n} := \Delta \vec{m}^n - \Delta_h \femvec{m}{h}{n} \quad \text{and} \quad \Delta_h \femvec{E}{\vec{m}}{n} := \Delta \vec{m}^n - \Delta_h \ellipticProj \vec{m}^n 
\end{equation*}
so that
\begin{equation} \label{eqn:discrete-laplacian-error-definition}
    \Delta_h \femvec{\delta}{\vec{m}}{n} = \Delta_h \femvec{E}{\vec{m}}{n} + \Delta_h \femvec{e}{\vec{m}}{n}.
\end{equation}
Now we derive the weak formulation satisfied by the various $\vec{\delta}$-terms.

The exact velocity $\vec{v}^n$ satisfies
\begin{align} \label{eqn:exact-velocity-exact-formulation}
    &\liprod{ \vec{v}_t^n}{\vec{\varphi}_h} + \liprod{\nabla \vec{v}^n}{\nabla \vec{\varphi}_h} -\liprod{\tilde{p}^n}{\div \vec{\varphi}_h} \nonumber \\
    &\qquad + \frac{1}{2} \liprod{(\vec{v}^n \cdot \nabla)\vec{v}^n}{\vec{\varphi}_h} - \frac{1}{2} \liprod{(\vec{v}^n \cdot \nabla)\vec{\varphi}_h}{\vec{v}^n} \nonumber \\
    &\qquad - \liprod{\curl \vec{B}^n \times \vec{B}^n}{\vec{\varphi}_h} + \liprod{(\vec{B}^n \cdot \nabla) \vec{m}^n}{\vec{\varphi}_h} + \liprod{\nabla \vec{m}^n \odot \Delta \vec{m}^n}{\vec{\varphi}_h} \nonumber \\
    &\qquad \qquad  = 0, \qquad \forall \vec{\varphi}_h \in \V_{\vec{v}}.
\end{align}
Therefore, since
\begin{equation*}
    \liprod{\nabla \hat{\vec{v}}_h^{n}}{\nabla \vec{\varphi}_h} - \liprod{\hat{\tilde{p}}_h^n}{\div \vec{\varphi}_h} = \liprod{\nabla \vec{v}^{n}}{\nabla \vec{\varphi}_h} - \liprod{\tilde{p}^n}{\div \vec{\varphi}_h},
\end{equation*}
the velocity $\vec{v}^n$ also satisfies
\begin{align} \label{eqn:exact-velocity-discrete-formulation}
    &\liprod{\discreteDtau \vec{v}^n}{\vec{\varphi}_h} + \liprod{\nabla \hat{\vec{v}}_h^n}{\nabla \vec{\varphi}_h} - \liprod{\hat{\tilde{p}}_h^n}{\div \vec{\varphi}_h} \nonumber \\
    &\qquad +\frac{1}{2} \liprod{(\vec{v}^{n-1} \cdot \nabla)\vec{v}^{n}}{\vec{\varphi}_h} - \frac{1}{2} \liprod{(\vec{v}^{n-1} \cdot \nabla) \vec{\varphi}_h}{\vec{v}^{n}} \nonumber \\
    &\qquad -\liprod{\curl \vec{B}^{n} \times \vec{B}^{n-1}}{\vec{\varphi}_h} + \liprod{(\vec{B}^{n-1} \cdot \nabla) \vec{m}^{n}}{\vec{\varphi}_h} + \liprod{\nabla \vec{m}^{n-1} \odot \Delta \vec{m}^n}{\vec{\varphi}_h} \nonumber \\
    &\qquad \qquad = \left(\liprod{\discreteDtau \vec{v}^{n}}{\vec{\varphi}_h} - \liprod{\vec{v}_t^{n}}{\vec{\varphi}_h}\right) \nonumber \\
    &\qquad \qquad \qquad + \left(\frac{1}{2} \liprod{(\vec{v}^{n-1} \cdot \nabla)\vec{v}^{n}}{\vec{\varphi}_h} - \frac{1}{2} \liprod{(\vec{v}^{n} \cdot \nabla)\vec{v}^{n}}{\vec{\varphi}_h}\right) \nonumber \\
    &\qquad \qquad \qquad - \left(\frac{1}{2} \liprod{(\vec{v}^{n-1} \cdot \nabla)\vec{\varphi}_h}{\vec{v}^{n}} - \frac{1}{2} \liprod{(\vec{v}^{n} \cdot \nabla) \vec{\varphi}_h}{\vec{v}^{n}}\right) \nonumber \\
    &\qquad \qquad \qquad - \left(\liprod{\curl \vec{B}^{n} \times \vec{B}^{n-1}}{\vec{\varphi}_h} - \liprod{\curl \vec{B}^{n} \times \vec{B}^{n}}{\vec{\varphi}_h}\right) \nonumber \\
    &\qquad \qquad \qquad + \left(\liprod{(\vec{B}^{n-1} \cdot \nabla)\vec{m}^{n}}{\vec{\varphi}_h} - \liprod{(\vec{B}^{n} \cdot \nabla) \vec{m}^{n}}{\vec{\varphi}_h}\right) \nonumber \\
    &\qquad \qquad \qquad + \left(\liprod{\nabla \vec{m}^{n-1} \odot \Delta \vec{m}^n}{\vec{\varphi}_h} - \liprod{\nabla \vec{m}^{n} \odot \Delta \vec{m}^n}{\vec{\varphi}_h}\right) \nonumber \\
    &\qquad \qquad := R_1(\vec{\varphi}_h) \qquad \forall \vec{\varphi}_h \in \V_{\vec{v}}.
\end{align}

Additionally, from the definition of the Stokes projection in Subsection \ref{sec:fem-spaces} and the fact that
\begin{equation*}
    \liprod{\div \vec{v}^{n}}{q_h} = 0, \qquad \forall q_h \in V_p,
\end{equation*}
we have
\begin{equation} \label{eqn:exact-velcity-divergence-discrete-formulation}
    \liprod{\div \hat{\vec{v}}_h^{n}}{q_h} = 0, \qquad \forall q_h \in V_p.
\end{equation}

Similarly, the exact magnetic field $\vec{B}^n$ and the exact magnetisation satisfy, respectively,
\begin{align} \label{eqn:exact-magnetic-field-discrete-formulation}
    &\liprod{\discreteDtau \vec{B}^{n}}{\vec{\omega}_h} + \liprod{\curl(\maxwellProj \vec{B}^{n})}{\curl \vec{\omega}_h} + \liprods{\div\left(\maxwellProj \vec{B}^n\right)}{\div \vec{\omega}_h} \nonumber \\
    &\qquad - \liprod{\vec{v}^{n} \times \vec{B}^{n-1}}{\curl \vec{\omega}_h} \nonumber \\
    &\qquad \qquad = \left(\liprod{\discreteDtau \vec{B}^{n}}{\vec{\omega}_h} - \liprod{\vec{B}_t^{n}}{\vec{\omega}_h}\right) \nonumber \\
    &\qquad \qquad \qquad  - \left(\liprod{\vec{v}^{n} \times \vec{B}^{n-1}}{\curl \vec{\omega}_h} - \liprod{\vec{v}^{n} \times \vec{B}^{n}}{\curl \vec{\omega}_h}\right) \nonumber \\
    &\qquad \qquad := R_2(\vec{\omega}_h) \qquad \forall \vec{\omega}_h \in \V_{\vec{B}},
\end{align}
and
\begin{align} \label{eqn:exact-magnetisation-discrete-formulation}
    &\liprod{\discreteDtau \vec{m}^{n}}{\vec{\xi}_h} +  \liprod{\nabla \ellipticProj \vec{m}^{n}}{\nabla \vec{\xi}_h} + \liprod{(\vec{v}^{n} \cdot \nabla) \vec{m}^{n-1}}{\vec{\xi}_h} -  \liprod{\vec{m}^{n-1} \times \Delta \vec{m}^{n}}{\vec{\xi}_h} \nonumber \\
    &\qquad -  \liprod{(\nabla \vec{m}^{n} \cdot \nabla \vec{m}^{n-1})\vec{m}^{n-1}}{\vec{\xi}_h} - \liprod{\vec{m}^{n-1} \times \vec{B}^{n}}{\vec{\xi}_h} - \liprod{\vec{m}^{n-1} \times (\vec{m}^{n-1} \times \vec{B}^{n})}{\vec{\xi}_h} \nonumber \\
    &\qquad \qquad = \left(\liprod{\discreteDtau \vec{m}^{n}}{\vec{\xi}_h} - \liprod{\vec{m}_t^{n}}{\vec{\xi}_h}\right) \nonumber \\
    &\qquad \qquad \qquad + \left(\liprod{(\vec{v}^{n} \cdot \nabla)\vec{m}^{n-1}}{\vec{\xi}_h} - \liprod{(\vec{v}^{n} \cdot \nabla)\vec{m}^{n}}{\vec{\xi}_h}\right) \nonumber \\
    &\qquad \qquad \qquad - \left(\liprod{\vec{m}^{n-1} \times \Delta \vec{m}^{n}}{\vec{\xi}_h} - \liprod{\vec{m}^{n} \times \Delta \vec{m}^{n}}{\vec{\xi}_h}\right) \nonumber \\
    &\qquad \qquad \qquad - \left(\liprod{\nabla \vec{m}^{n} \cdot \nabla \vec{m}^{n-1})\vec{m}^{n-1}}{\vec{\xi}_h} - \liprod{(\nabla \vec{m}^{n} \cdot \nabla \vec{m}^{n})\vec{m}^{n}}{\vec{\xi}_h}\right) \nonumber \\
    &\qquad \qquad \qquad - \left(\liprod{\vec{m}^{n-1} \times \vec{B}^{n}}{\vec{\xi}_h} - \liprod{\vec{m}^{n} \times \vec{B}^{n}}{\vec{\xi}_h}\right) \nonumber \\
    &\qquad \qquad \qquad - \left(\liprod{\vec{m}^{n-1} \times (\vec{m}^{n-1} \times \vec{B}^{n})}{\vec{\xi}_h} - \liprod{\vec{m}^{n} \times (\vec{m}^{n} \times \vec{B}^{n})}{\vec{\xi}_h}\right) \nonumber \\
    &\qquad \qquad := R_3(\vec{\xi}_h) \qquad \forall \vec{\xi}_h \in \V_{\vec{m}}.
\end{align}

By subtracting \eqref{eqn:discrete-velocity-formulation} from \eqref{eqn:exact-velocity-discrete-formulation}, we obtain the velocity error equation
\begin{align} \label{eqn:error-velocity}
    &\liprod{\discreteDtau \vec{\delta}^{n}_{\vec{v}}}{\vec{\varphi}_h} + \liprod{\nabla \vec{e}^{n}_{\vec{v}}}{\nabla \vec{\varphi}_h} \nonumber \\
    &\qquad + \left(\frac{1}{2} \liprod{(\vec{\delta}^{n-1}_{\vec{v}} \cdot \nabla) \vec{v}^{n}}{\vec{\varphi}_h} + \frac{1}{2} \liprod{(\vec{v}^{n-1}_{h} \cdot \nabla) \vec{\delta}_{\vec{v}}^n}{\vec{\varphi}_h}\right) \nonumber \\
    &\qquad - \left(\frac{1}{2} \liprod{(\vec{\delta}^{n-1}_{\vec{v}} \cdot \nabla) \vec{\varphi}_h}{\vec{v}^{n}} + \frac{1}{2} \liprod{(\vec{v}^{n-1}_{h} \cdot \nabla)\vec{\varphi}_h}{\vec{\delta}^{n}_{\vec{v}}}\right) \nonumber \\
    &\qquad - \left(\liprod{\curl \vec{\delta}^{n}_{\vec{B}} \times \vec{B}^{n-1}_h}{\vec{\varphi}_h} + \liprod{\curl \vec{B}^{n} \times \vec{\delta}^{n-1}_{\vec{B}}}{\vec{\varphi}_h}\right) \nonumber \\
    &\qquad + \left(\liprod{(\vec{\delta}^{n-1}_{\vec{B}} \cdot \nabla) \vec{m}^{n}}{\vec{\varphi}_h} + \liprod{(\vec{B}^{n-1}_{h} \cdot \nabla)\vec{\delta}^{n}_{\vec{m}}}{\vec{\varphi}_h}\right) \nonumber \\
    &\qquad + \left(\liprod{\nabla \femvec{\delta}{\vec{m}}{n-1} \odot \Delta \vec{m}^n}{\vec{\varphi}_h} + \liprod{\nabla \vec{m}^{n-1} \odot \Delta_h \femvec{\delta}{\vec{m}}{n}}{\vec{\varphi}_h} - \liprod{\nabla \femvec{\delta}{\vec{m}}{n-1} \odot \Delta_h \femvec{\delta}{\vec{m}}{n}}{\vec{\varphi}_h}\right) \nonumber \\
    &\qquad - \liprod{e^{n}_{p}}{\div \vec{\varphi}_h} \nonumber \\
    &\qquad \qquad = R_1(\vec{\varphi}_h) \qquad \forall \vec{\varphi}_h \in \V_{\vec{v}}.
\end{align}
Note that
\begin{equation} \label{eqn:error-velocity-divergence}
    \liprod{\div \vec{e}^{n}_{\vec{v}}}{q_h} = 0 \qquad \forall q_h \in V_p.
\end{equation}

Similarly, by subtracting \eqref{eqn:discrete-magnetic-field-formulation} from \eqref{eqn:exact-magnetic-field-discrete-formulation} we obtain the magnetic field error equation
\begin{align} \label{eqn:error-magnetic-field}
    &\liprod{\discreteDtau \vec{\delta}^{n}_{\vec{B}}}{\vec{\omega}_h} + \liprod{\curl \vec{e}^{n}_{\vec{B}}}{\curl \vec{\omega}_h} + \liprods{\div \vec{e}_{\vec{B}}^n}{\div \vec{\omega}_h} \nonumber \\
    &\qquad - \left(\liprod{\vec{\delta}^{n}_{\vec{v}} \times \vec{B}^{n-1}_h}{\curl \vec{\omega}_h} + \liprod{\vec{v}^{n} \times \vec{\delta}^{n-1}_{\vec{B}}}{\curl \vec{\omega}_h}\right) \nonumber \\
    &\qquad \qquad = R_2(\vec{\omega}_h) \qquad \forall \vec{\omega}_h \in \V_{\vec{B}}.
\end{align}

Finally, the magnetisation error equation is obtained by subtracting \eqref{eqn:discrete-magnetisation-formulation} from \eqref{eqn:exact-magnetisation-discrete-formulation}:
\begin{align} \label{eqn:error-magnetisation}
    &\liprod{\discreteDtau \vec{\delta}^{n}_{\vec{m}}}{\vec{\xi}_h} +  \liprod{\nabla \vec{e}^{n}_{\vec{m}}}{\nabla \vec{\xi}_h} \nonumber \\
    &\qquad + \left(\liprod{(\vec{\delta}^{n}_{\vec{v}} \cdot \nabla) \vec{m}^{n-1}_h}{\vec{\xi}_h} + \liprod{(\vec{v}^{n} \cdot \nabla) \vec{\delta}^{n-1}_{\vec{m}}}{\vec{\xi}_h}\right) \nonumber \\
    &\qquad - \left( \liprod{\femvec{\delta}{\vec{m}}{n-1} \times \Delta \vec{m}^n}{\vec{\xi}_h} +  \liprod{\femvec{m}{h}{n-1} \times \Delta_h \femvec{\delta}{\vec{m}}{n}}{\vec{\xi}_h}\right) \nonumber \\
    &\qquad -\bigg( \liprod{(\nabla \vec{m}^n \cdot \nabla \vec{m}^{n-1}) \femvec{\delta}{\vec{m}}{n-1}}{\vec{\xi}_h} +  \liprod{(\nabla \femvec{\delta}{\vec{m}}{n} \cdot \nabla \vec{m}^{n-1}) \femvec{m}{h}{n-1}}{\vec{\xi}_h} \nonumber \\
    &\qquad \qquad +  \liprod{(\nabla \femvec{\delta}{\vec{m}}{n-1} \cdot \nabla \vec{m}^n) \femvec{m}{h}{n-1}}{\vec{\xi}_h} -  \liprod{(\nabla \femvec{\delta}{\vec{m}}{n} \cdot \nabla \femvec{\delta}{\vec{m}}{n-1}) \femvec{m}{h}{n-1}}{\vec{\xi}_h}\bigg) \nonumber \\
    &\qquad - \left(\liprod{\vec{\delta}^{n-1}_{\vec{m}} \times \vec{B}^{n}}{\vec{\xi}_h} + \liprod{\vec{m}^{n-1}_{h} \times \vec{\delta}^{n}_{\vec{B}}}{\vec{\xi}_h}\right) \nonumber \\
    &\qquad - \bigg(\liprod{\vec{\delta}^{n-1}_{\vec{m}} \times (\vec{m}^{n-1} \times \vec{B}^{n})}{\vec{\xi}_h} + \liprod{\vec{m}^{n-1}_{h} \times (\vec{\delta}^{n-1}_{\vec{m}} \times \vec{B}^{n})}{\vec{\xi}_h} \nonumber \\
    &\qquad \qquad + \liprod{\vec{m}^{n-1}_{h} \times (\vec{m}^{n-1}_{h} \times \vec{\delta}^{n}_{\vec{B}})}{\vec{\xi}_h}\bigg) \nonumber \\
    &\qquad \qquad \qquad = R_3(\vec{\xi}_h) \qquad \forall \vec{\xi}_h \in \V_{\vec{m}}.
\end{align}

The error estimates in Theorem \ref{thm:main-result} are essentially estimates for the $\vec{\delta}^n$-terms, which are composed of $\vec{E}^n$ and $\vec{e}^n$. As error estimates for $\vec{E}^n$-terms are just the error estimates of the projection, we focus our efforts on proving error estimates for the $\vec{e}^n$-terms, namely \eqref{eqn:induction-error-estimate} to be proved in the next section.

\section{Proof of Theorem \ref{thm:main-result}} \label{sec:theorem-proof}

We will prove the well-posedness of the linear system \eqref{eqn:discrete-velocity-formulation}--\eqref{eqn:discrete-magnetisation-formulation} concurrently with estimates for the error terms $\femvec{e}{\vec{v}}{n}$, $\femvec{e}{\vec{B}}{n}$, and $\femvec{e}{\vec{m}}{n}$, which, when combined with the projection error terms $\femvec{E}{\vec{v}}{n}$, $\femvec{E}{\vec{B}}{n}$, and $\femvec{E}{\vec{m}}{n}$, will yield \eqref{eqn:error-estimate}, see \eqref{equ:error split}. First, we re-write the system \eqref{eqn:discrete-velocity-formulation}--\eqref{eqn:discrete-magnetisation-formulation} in the following form.

For $1 \leq n \leq N$, given $(\femvec{v}{h}{n-1}, \tilde{p}_h^{n-1}, \femvec{B}{h}{n-1}, \femvec{m}{h}{n-1}) \in \V_{\vec{v}} \times V_p \times \V_{\vec{B}} \times \V_{\vec{m}}$ we seek $(\femvec{v}{h}{n}, \tilde{p}_h^{n}, \femvec{B}{h}{n}, \femvec{m}{h}{n}) \in \V_{\vec{v}} \times V_p \times \V_{\vec{B}} \times \V_{\vec{m}}$ satisfying
\begin{align*}
    &\liprod{\femvec{v}{h}{n}}{\vec{\varphi}_h} + \tau \liprod{\nabla \femvec{v}{h}{n}}{\nabla \vec{\varphi}_h} +
    \frac{\tau}{2} \liprod{\left(\femvec{v}{h}{n-1} \cdot \nabla\right) \femvec{v}{h}{n}}{\vec{\varphi}_h} - \frac{\tau}{2} \liprod{\left(\femvec{v}{h}{n-1} \cdot \nabla\right) \vec{\varphi}_h}{\femvec{v}{h}{n}} \nonumber \\
    &\qquad - \tau \liprod{\curl \femvec{B}{h}{n} \times \femvec{B}{h}{n-1}}{\vec{\varphi}_h} + \tau \liprod{\left(\femvec{B}{h}{n-1} \cdot \nabla\right) \femvec{m}{h}{n}}{\vec{\varphi}_h} + \tau \liprod{\nabla \femvec{m}{h}{n-1}\odot \Delta_h \femvec{m}{h}{n}}{\vec{\varphi}_h} \nonumber \\
    &\qquad - \tau \liprod{\Tilde{p}_h^n}{\div \vec{\varphi}_h} = \liprod{\femvec{v}{h}{n-1}}{\vec{\varphi}_h}, \qquad \forall\vec{\varphi}_h\in \V_{\vec{v}},  \\[1ex]
    &\liprod{\div \femvec{v}{h}{n}}{q_h} = 0, \qquad \forall q_h\in V_p, \\[1ex]
    &\liprod{\femvec{B}{h}{n}}{\vec{\omega}_h} + \tau \liprod{\curl \femvec{B}{h}{n}}{\curl \vec{\omega}_h} + \tau \liprods{\div \femvec{B}{h}{n}}{\div \vec{\omega}_h} \nonumber \\
    &\qquad - \tau \liprod{\femvec{v}{h}{n} \times \femvec{B}{h}{n-1}}{\curl \vec{\omega}_h} = \liprod{\femvec{B}{h}{n-1}}{\vec{\omega}_h}, 
    \qquad \forall\vec{\omega}_h\in \V_{\vec{B}},  \\[1ex]
    &\liprod{\femvec{m}{h}{n}}{\vec{\xi}_h} +\tau \liprod{\nabla \femvec{m}{h}{n}}{\nabla \vec{\xi}_h} + \tau \liprod{\left(\femvec{v}{h}{n} \cdot \nabla\right) \femvec{m}{h}{n-1}}{\vec{\xi}_h} - \tau \liprod{\femvec{m}{h}{n-1} \times \Delta_h \femvec{m}{h}{n}}{\vec{\xi}_h} \nonumber \\
    &\qquad - \tau \liprod{\left(\nabla \femvec{m}{h}{n} \cdot \nabla \femvec{m}{h}{n-1}\right) \femvec{m}{h}{n-1}}{\vec{\xi}_h} - \tau \liprod{\femvec{m}{h}{n-1} \times \femvec{B}{h}{n}}{\vec{\xi}_h} \nonumber \\
    &\qquad - \tau \liprod{\femvec{m}{h}{n-1} \times \left(\femvec{m}{h}{n-1} \times \femvec{B}{h}{n}\right)}{\vec{\xi}_h} = \liprod{\femvec{m}{h}{n-1}}{\vec{\xi}_h}, \qquad \forall\vec{\xi}_h\in \V_{\vec{m}}.
\end{align*}

Take $(\femvec{v}{h}{n-1}, \tilde{p}_h^{n-1}, \femvec{B}{h}{n-1}, \femvec{m}{h}{n-1})$ to be the projection of the initial data when $n=1$ and for $n \geq 2$ take it to be the finite element solutions to \eqref{eqn:discrete-velocity-formulation}--\eqref{eqn:discrete-magnetisation-formulation} at time step $n-1$ as per Algorithm \ref{alg:vBm}. We may now consider the homogeneous version of the system (i.e., with RHS equal to zero) as follows:

For $1 \leq n \leq N$, given $(\femvec{v}{h}{n-1}, \tilde{p}_h^{n-1}, \femvec{B}{h}{n-1}, \femvec{m}{h}{n-1}) \in \V_{\vec{v}} \times V_p \times \V_{\vec{B}} \times \V_{\vec{m}}$ we seek $(\femvec{\dot{v}}{h}{n}, \dot{\tilde{p}}_h^n, \femvec{\dot{B}}{h}{n}, \femvec{\dot{m}}{h}{n}) \in \V_{\vec{v}} \times V_p \times \V_{\vec{B}} \times \V_{\vec{m}}$ such that
\begin{align}
    &\liprod{\femvec{\dot{v}}{h}{n}}{\vec{\varphi}_h} +  \tau \liprod{\nabla \femvec{\dot{v}}{h}{n}}{\nabla \vec{\varphi}_h} +\frac{\tau}{2} \liprod{(\femvec{v}{h}{n-1} \cdot \nabla)\femvec{\dot{v}}{h}{n}}{\vec{\varphi}_h} - \frac{\tau}{2} \liprod{(\femvec{v}{h}{n-1} \cdot \nabla)\vec{\varphi}_h}{\femvec{\dot{v}}{h}{n}} \nonumber \\
    &\qquad - \tau \liprod{\curl \femvec{\dot{B}}{h}{n} \times \femvec{B}{h}{n-1}}{\vec{\varphi}_h} + \tau \liprod{(\femvec{B}{h}{n-1} \cdot \nabla)\femvec{\dot{m}}{h}{n}}{\vec{\varphi}_h} + \tau \liprod{\nabla \femvec{m}{h}{n-1} \odot \Delta_h \femvec{\dot{m}}{h}{n}}{\vec{\varphi}_h}  \nonumber \\
    &\qquad\qquad - \liprods{\dot{\tilde{p}}_h^n}{\div \vec{\varphi}_h} = 0,  \label{eqn:homo-discrete-formulation-velocity} \\[2ex]
    &\liprod{\div \femvec{\dot{v}}{h}{n}}{q_h} = 0 \qquad \forall q_h \in V_p, \label{eqn:homo-discrete-formulation-div-velocity} \\[2ex]
    &\liprod{\femvec{\dot{B}}{h}{n}}{\vec{\omega}_h} +  \tau \liprod{\curl \femvec{\dot{B}}{h}{n}}{\curl \vec{\omega}_h} +  \tau \liprods{\div \femvec{\dot{B}}{h}{n}}{\div \vec{\omega}_h} \nonumber \\
    &\qquad - \tau \liprod{\femvec{\dot{v}}{h}{n} \times \femvec{B}{h}{n-1}}{\curl \vec{\omega}_h} = 0  \qquad \forall \vec{\omega}_h \in \V_{\vec{B}}, \label{eqn:homo-discrete-formulation-magnetic-field} \\[2ex]
    &\liprod{\femvec{\dot{m}}{h}{n}}{\vec{\xi}_h} +  \tau \liprod{\nabla \femvec{\dot{m}}{h}{n}}{\nabla \vec{\xi}_h} +\tau \liprod{(\femvec{\dot{v}}{h}{n} \cdot \nabla)\femvec{m}{h}{n-1}}{\vec{\xi}_h} - \tau \liprod{\femvec{m}{h}{n-1} \times \Delta_h \femvec{\dot{m}}{h}{n}}{\vec{\xi}_h} \nonumber \\
    &\qquad - \tau \liprod{(\nabla \femvec{\dot{m}}{h}{n} \cdot \nabla \femvec{m}{h}{n-1})\femvec{m}{h}{n-1}}{\vec{\xi}_h} - \tau \liprod{\femvec{m}{h}{n-1} \times \femvec{\dot{B}}{h}{n}}{\vec{\xi}_h} \nonumber \\
    &\qquad - \tau \liprod{\femvec{m}{h}{n-1} \times (\femvec{m}{h}{n-1} \times \femvec{\dot{B}}{h}{n})}{\vec{\xi}_h} = 0 \qquad \forall \vec{\xi}_h \in \V_{\vec{m}}. \label{eqn:homo-discrete-formulation-magnetisation}
\end{align}
We shall use the notation $\femvec{\dot{u}}{h}{n}$ to distinguish the solution of the homogeneous linear system from the inhomogeneous version, and we will show that the homogeneous system only has the zero solution.

We shall now use induction to prove that there exists a positive constant $A$ such that for all $n \in \{1,\ldots,N\}$, the solution $(\femvec{v}{h}{n}, \tilde{p}_h^n, \femvec{B}{h}{n}, \femvec{m}{h}{n})$ to \eqref{eqn:discrete-velocity-formulation}--\eqref{eqn:discrete-magnetisation-formulation} exists and satisfies
\begin{align} \label{eqn:induction-error-estimate}
    P^n + \tau \sum_{i=1}^n Q^i \leq A \left(\tau^2 + h^{2(\ell-\beta)} + h^{2(k-\beta)} + h^{2(r-\beta)}\right), \qquad \forall h \in (0,h_0],
\end{align}
where $h_0$ is any positive constant satisfying $h_0 < A^{-1/\beta}$ and where
\begin{align*}
    P^n &:= \lnorm{\femvec{e}{\vec{v}}{n}}{2}^2 + \lnorm{\femvec{e}{\vec{B}}{n}}{2}^2 + \hnorm{\femvec{e}{\vec{m}}{n}}{1}^2, \\
    Q^n &:= \hnorm{\femvec{e}{\vec{v}}{n}}{1}^2 + \hnorm{\femvec{e}{\vec{B}}{n}}{1}^2 + \lnorm{\Delta_h \femvec{e}{\vec{m}}{n}}{2}^2.
\end{align*}

\noindent \textbf{Base Case.} For all $1 \leq  n \leq N$, define
\begin{align*}
    X^n &:= \lnorm{\femvec{\dot{v}}{h}{n}}{2}^2 + \lnorm{\femvec{\dot{B}}{h}{n}}{2}^2 + \hnorm{\femvec{\dot{m}}{h}{n}}{1}^2, \\
    Y^n &:= \hnorm{\femvec{\dot{v}}{h}{n}}{1}^2 + \hnorm{\femvec{\dot{B}}{h}{n}}{1}^2 + \lnorm{\Delta_h \femvec{\dot{m}}{h}{n}}{2}^2, \\
    R &:= \tau^2 + \big(h^{2\ell} +h^{2k} + h^{2r}\big) + \tau^{-2} \big(h^{2(\ell+1)} +h^{2(k+1)} + h^{2(r+1)}\big).
\end{align*}
Observe that the initial values $(\femvec{v}{h}{0}, \femvec{B}{h}{0}, \femvec{m}{h}{0})$ are well-defined from Algorithm \ref{alg:vBm}, and note that this implies that
\begin{equation} \label{eqn:thm-proof_1}
    \femvec{\delta}{\vec{v}}{0} = \femvec{E}{\vec{v}}{0}, \quad \femvec{\delta}{\vec{B}}{0} = \femvec{E}{\vec{B}}{0}, \quad \text{and} \quad \femvec{\delta}{\vec{m}}{0} = \femvec{E}{\vec{m}}{0},
\end{equation}
since
\begin{equation} \label{eqn:thm-proof_2}
    \femvec{e}{\vec{v}}{0} = \femvec{e}{\vec{B}}{0} = \femvec{e}{\vec{m}}{0} = \vec{0}.
\end{equation}
By Lemma \ref{lem:homo-estimates} we conclude that
\begin{equation*}
    X^1 + \tau Y^1 \leq c(\varepsilon) \tau X^1 + C(\varepsilon + h^{1/2}) \tau Y^1.
\end{equation*}
For sufficiently small $\varepsilon$, and subsequently sufficiently small $h_0$, we may absorb $Y^1$ into the left-hand side to obtain
\begin{equation*}
    X^1 + \tau Y^1 \leq C \tau X^1.
\end{equation*}
Hence, for sufficiently small $\tau_0$ we have
\begin{equation*}
    X^1 + \tau Y^1 \leq 0,
\end{equation*}
which gives $X^1 = 0$. In other words, for $n=1$ the homogeneous linear system \eqref{eqn:homo-discrete-formulation-velocity}--\eqref{eqn:homo-discrete-formulation-magnetisation} has only the zero solution
\begin{equation*}
    \femvec{\dot{v}}{h}{1} = \femvec{\dot{B}}{h}{1} = \femvec{\dot{m}}{h}{1} = \vec{0},
\end{equation*}
and $\dot{\tilde{p}}_h^1 = 0$, the latter being obtained by noting that Taylor-Hood elements are inf-sup stable. Therefore, the homogeneous linear system is well-posed at $n=1$ which implies that \eqref{eqn:discrete-velocity-formulation}--\eqref{eqn:discrete-magnetisation-formulation} is well-posed at $n=1$ as well. And so, the finite element solutions $(\femvec{v}{h}{1}, \tilde{p}_h^1, \femvec{B}{h}{1}, \femvec{m}{h}{1})$ exist.

Having obtained the existence of a solution at $n=1$ and noting \eqref{eqn:thm-proof_1}, we apply Lemmas \ref{lem:inductive-step-velocity}--\ref{lem:inductive-step-magnetisation-H1} to obtain
\begin{equation*}
    P^1 - P^0 + C_{\Omega} \tau Q^1 \leq c(\varepsilon) \tau R + c(\varepsilon) \tau P^0 + c(\varepsilon) \tau P^1 + C(\varepsilon + h^{1/2}) \tau Q^1.
\end{equation*}
By \eqref{eqn:thm-proof_2} we have $P^0 = 0$, and so we have
\begin{equation*}
    P^1 + C_{\Omega} \tau Q^1 + \leq c(\varepsilon) \tau R + c(\varepsilon) \tau P^1 + C(\varepsilon + h^{1/2}) \tau Q^1.
\end{equation*}
In a similar manner to $X^1$ and $Y^1$, we obtain for sufficiently small $\varepsilon$, $h_0$, and $\tau_0$,
\begin{equation*}
    P^1 + \tau Q^1 \leq C_0 \tau R,
\end{equation*}
where $C_0$ is some positive constant, implying \eqref{eqn:induction-error-estimate} for $n=1$, completing the proof of the base case. \\

\noindent \textbf{Inductive Assumption.} Let $n \geq 2$. For all $1 \leq i \leq n-1$ assume that $(\femvec{v}{h}{i}, \tilde{p}_h^i, \femvec{B}{h}{i}, \femvec{m}{h}{i})$ exists and that
\begin{align} \label{eqn:inductive-assumption}
    P^i + \tau \sum_{j=1}^i Q^j \leq A (\tau^2 + h^{2(\ell-\beta)} + h^{2(k-\beta)} + h^{2(r-\beta)}), \qquad \forall h \in (0,h_0]
\end{align}
where $A$ is some positive constant to be chosen and $h_0$ is any positive
constant satisfying $h_0 < A^{-1/\beta}$.  \\

\noindent \textbf{Inductive Step.} By Lemma \ref{lem:homo-estimates} and the same arguments as in the base case, we conclude that the non-homogeneous linear system \eqref{eqn:discrete-velocity-formulation}--\eqref{eqn:discrete-magnetisation-formulation} has a unique solution $(\femvec{v}{h}{n}, \tilde{p}_h^n, \femvec{B}{h}{n}, \femvec{m}{h}{n})$ at time step $n$.

Furthermore, it follows from Lemmas \ref{lem:inductive-step-velocity}--\ref{lem:inductive-step-magnetisation-H1} that there exists $c(\varepsilon) > 0$ and $C > 0$ (which may depend only on the exact solution and the domain $\Omega$) satisfying
\begin{equation*}
    P^i - P^{i-1} + C_{\Omega} Q^i \leq c(\varepsilon) \tau R + c(\varepsilon) \tau P^{i-1} + c(\varepsilon) \tau P^i + C(\varepsilon + h^{1/2}) \tau Q^i,
\end{equation*}
for all $1 \leq i \leq n$. Observe that the case when $i=1$ was established in the arguments for the base case. By choosing $\varepsilon > 0$ and $h_0$ sufficiently small, we derive
\begin{equation*}
    P^i - P^{i-1} + C_{\Omega} \tau Q^i \leq C \tau R + C \tau P^{i-1} + C \tau P^i,
\end{equation*}
for all $1 \leq i \leq n$, where the positive constant $C$ on the right-hand side is independent of $i$ and $n$. Summing these inequalities from $i=1$ to $i=n$ we obtain (recalling that $P^0 = 0$)
\begin{align*}
    P^n + \tau \sum_{i=1}^n Q^i &\leq C \tau n R + C \tau \sum_{i=1}^n P^{i-1} + C \tau \sum_{i=1}^n P^i \\
    &= C n \tau R + C \tau P^n + C \tau \sum_{i=1}^{n-1} P^i \\
    &\leq C T R + C \tau P^n + C \tau \sum_{i=1}^{n-1} P^i, 
\end{align*}
where $1 \leq n \leq N$. Thus, for $\tau \in (0,\tau_0]$ with sufficiently small $\tau_0 > 0$, we have
\begin{equation} \label{eqn:thm-proof_3}
    P^n + \tau \sum_{i=1}^n Q^i \leq C T R + C \tau \sum_{i=1}^{n-1} P^i.
\end{equation}
By the discrete Gronwall inequality,
\begin{equation*}
    P^n \leq C T R \exp \left(\sum_{i=1}^{n-1} C \tau\right) \leq C T \exp(C T) R = \tilde{C}_0 R,
\end{equation*}
where $\tilde{C}_0 := C T \exp(C T)$, for all $1 \leq n \leq N$. Observe that $\tilde{C}_0$ is independent of $n$. Inserting this upper bound of $P^n$ into the right-hand side of \eqref{eqn:thm-proof_3} we obtain
\begin{equation*}
    P^n + \tau \sum_{i=1}^n Q^i \leq C T R + C \tau \sum_{i=1}^{n-1} \tilde{C}_0 R \leq C T R + C \tilde{C}_0 T R = \hat{C}_0 R,
\end{equation*}
where $\hat{C}_0 := CT + C \tilde{C}_0 T$, for all $1 \leq n \leq N$. Observe that $\hat{C}_0$ is also independent of $n$. Recalling the positive constant $C_0$ in the base case, we may choose $A$ such that
\begin{equation*}
    A = \max \{C_0, \hat{C}_0\}
\end{equation*}
for sufficiently small $h_0$ such that $h_0 < A^{-1/\beta}$. The proof by induction of \eqref{eqn:induction-error-estimate} is complete once we apply \eqref{eqn:tau-growth}. \\

\noindent \textbf{Error Estimate.} Finally, we complete the proof of Theorem \ref{thm:main-result} by proving the error estimate \eqref{eqn:error-estimate}. Observe that \eqref{eqn:error-estimate} is just the error estimate for the $\vec{\delta}^n$-terms which is composed of $\vec{E}^n$-terms and $\vec{e}^n$-terms, see \eqref{equ:error split}. The error estimate for the $\vec{E}^n$-terms follows from Lemmas \ref{lem:L2-proj-estimates}--\ref{lem:maxwell-proj-estimates} while the error estimate for $\vec{e}^n$-terms follows from \eqref{eqn:induction-error-estimate}. Hence, by combining these results and taking the square root, we obtain \eqref{eqn:error-estimate}. Next, we show that $\femvec{m}{h}{n}$ satisfies the constraint $|\femvec{m}{h}{n}| = 1$ asymptotically. Using Lemma \ref{lem:induction-assumptions-solution-estimates} we have
\begin{align*}
    \lnorm{1 - |\femvec{m}{h}{n}|^2}{2} &= \lnorm{|\vec{m}^n|^2 - |\femvec{m}{h}{n}|^2}{2} \\
    &\leq \left(\lnorm{\vec{m}^n}{\infty} + \lnorm{\femvec{m}{h }{n}}{\infty}\right) \lnorm{\vec{m}^n - \femvec{m}{h}{n}}{2} \\
    &\leq C \lnorm{\vec{m}^n - \femvec{m}{h}{n}}{2} \\
    &\leq C( \tau + h^{\ell - \beta} + h^{k - \beta} + h^{r - \beta}) \to 0.
\end{align*}
This completes the proof of Theorem \ref{thm:main-result}.

\section{Numerical experiments} \label{sec:numerical-experiments}

Numerical experiments were conducted for a two-fold purpose, namely to observe the convergence of the numerical method visually and the order of convergence numerically. We carry out these experiments in the four different scenarios, with two experiments for each purpose.

In the first scenario, we visually confirm the convergence of the numerical method by solving the linear system \eqref{eqn:discrete-velocity-formulation}--\eqref{eqn:discrete-magnetisation-formulation} over a sphere of radius $1/2$ with $T = 1$ and $\tau = h$, i.e., $\beta = 0$. Having chosen the exact solutions
\begin{subequations}
	\label{eqn:exact-soln-1}
\begin{alignat}{1}
\vec{v}(t, \vec{x}) &= e^t \sin \left(4 \pi |\vec{x}|^2\right) (x_1, -x_0, 0)^{\top}, \label{eqn:exact-soln-1a} \\
p(t,\vec{x}) &= e^t x_0 x_1 x_2, \label{eqn:exact-soln-1b} \\
\vec{B}(t,\vec{x}) &= e^t \sin^2\left(4 \pi |\vec{x}|^2\right) \cos\left(4 \pi |\vec{x}|^2\right) (x_1, -x_0, 0)^{\top}, \label{eqn:exact-soln-1c}\\
\vec{m}(t,\vec{x}) &= (\cos t, 0, \sin t), \label{eqn:exact-soln-1d}
\end{alignat}
\end{subequations}
observe that the divergence-free conditions, the boundary conditions, and the constant magnitude condition $|\vec{m}| = 1$ are satisfied. For this scenario, we use quadratic finite elements (i.e., $\ell = k = r = 2$), quadratic isoparametric elements for the boundary, and we set the initial data to be the values of the exact solutions at $t = 0$. 

Figure \ref{fig:velocity_sphere} demonstrates that the velocity vector field at $t=1$ remains stable as $\tau, h \to 0$.
\begin{figure}
    \centering
    \subfloat[$h=1/8$]{\includegraphics[scale=0.1]{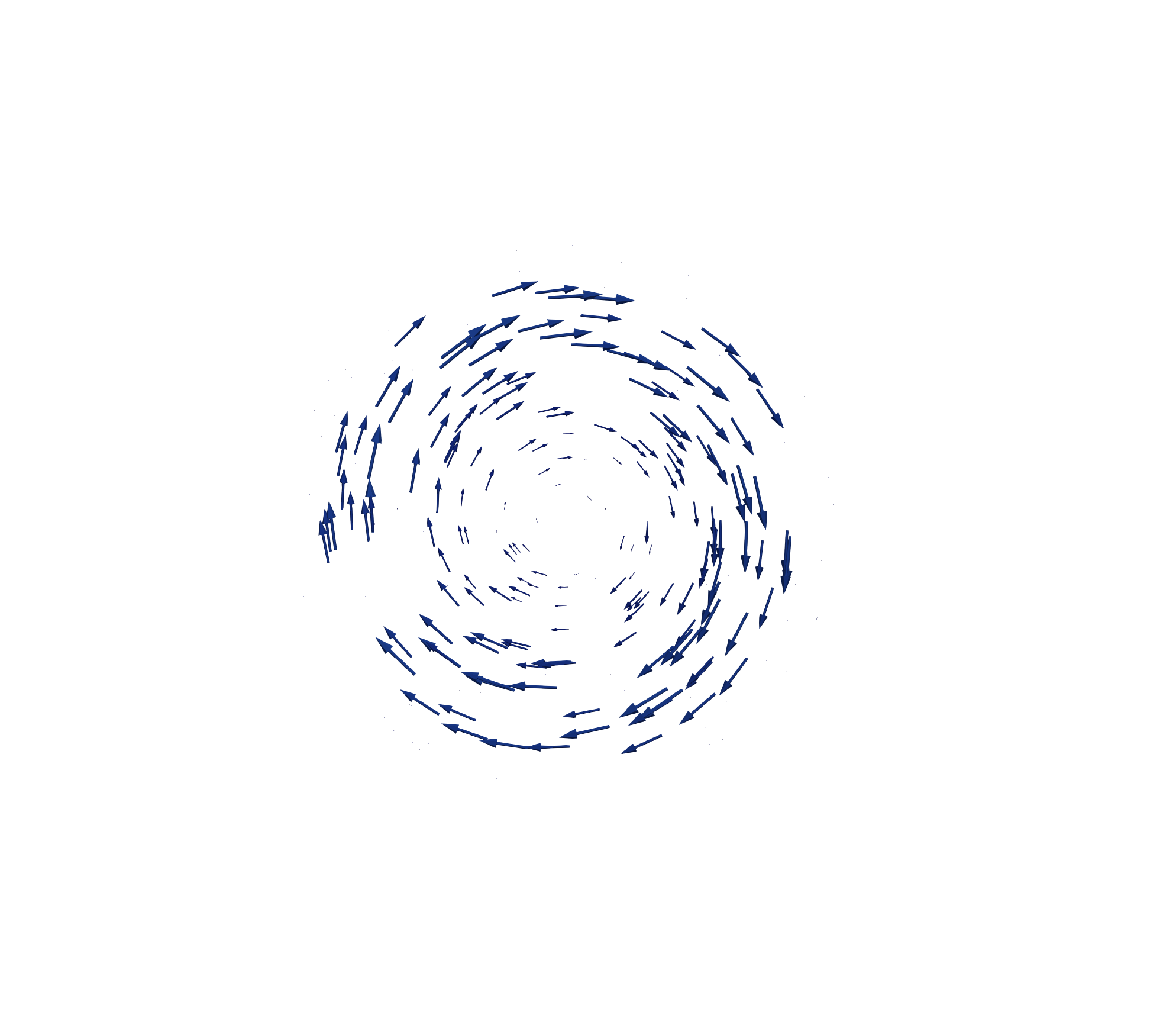}} 
    \subfloat[$h=1/12$]{\includegraphics[scale=0.1]{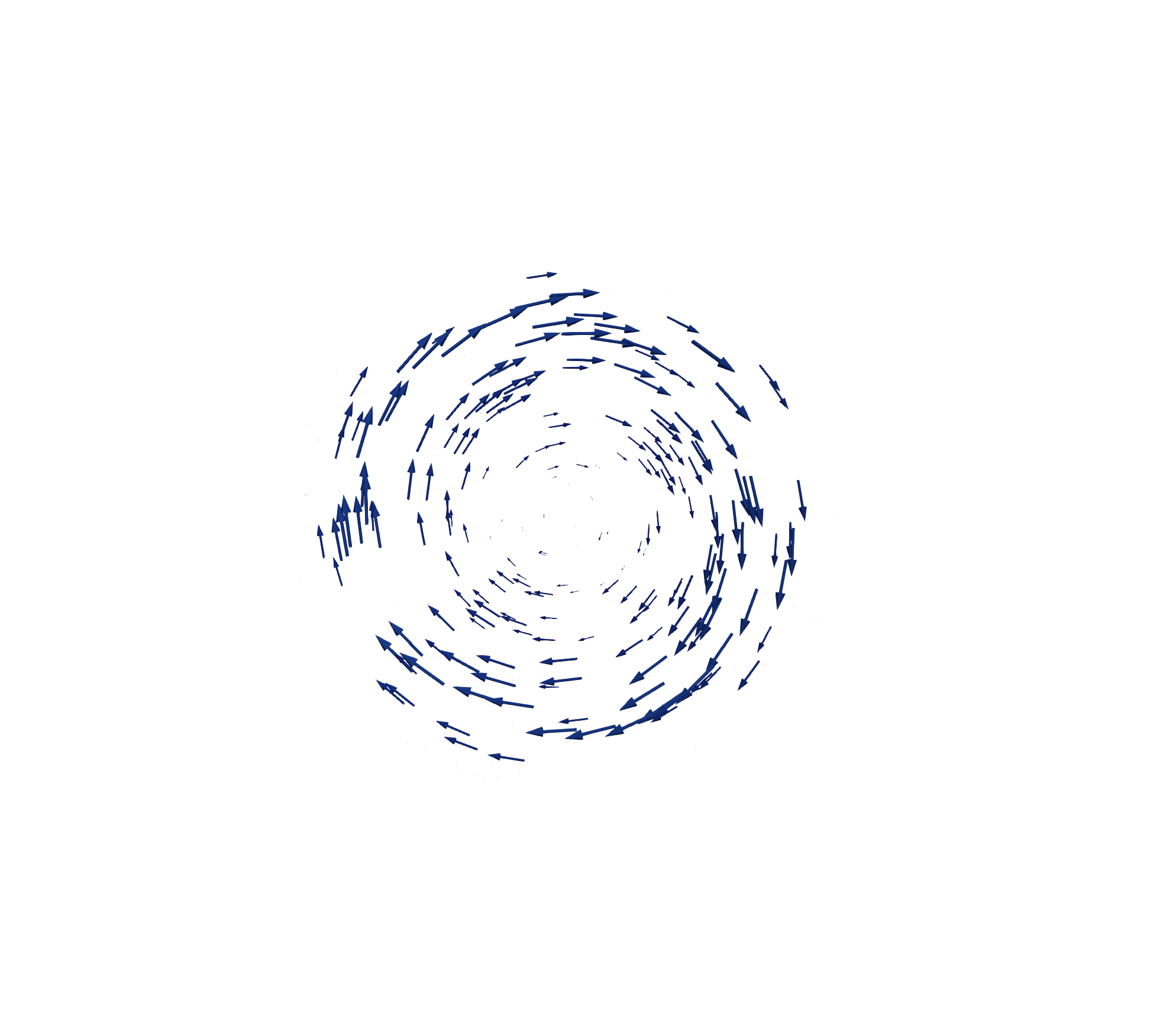}}\\
    \subfloat[$h = 1/16$]{\includegraphics[scale=0.1]{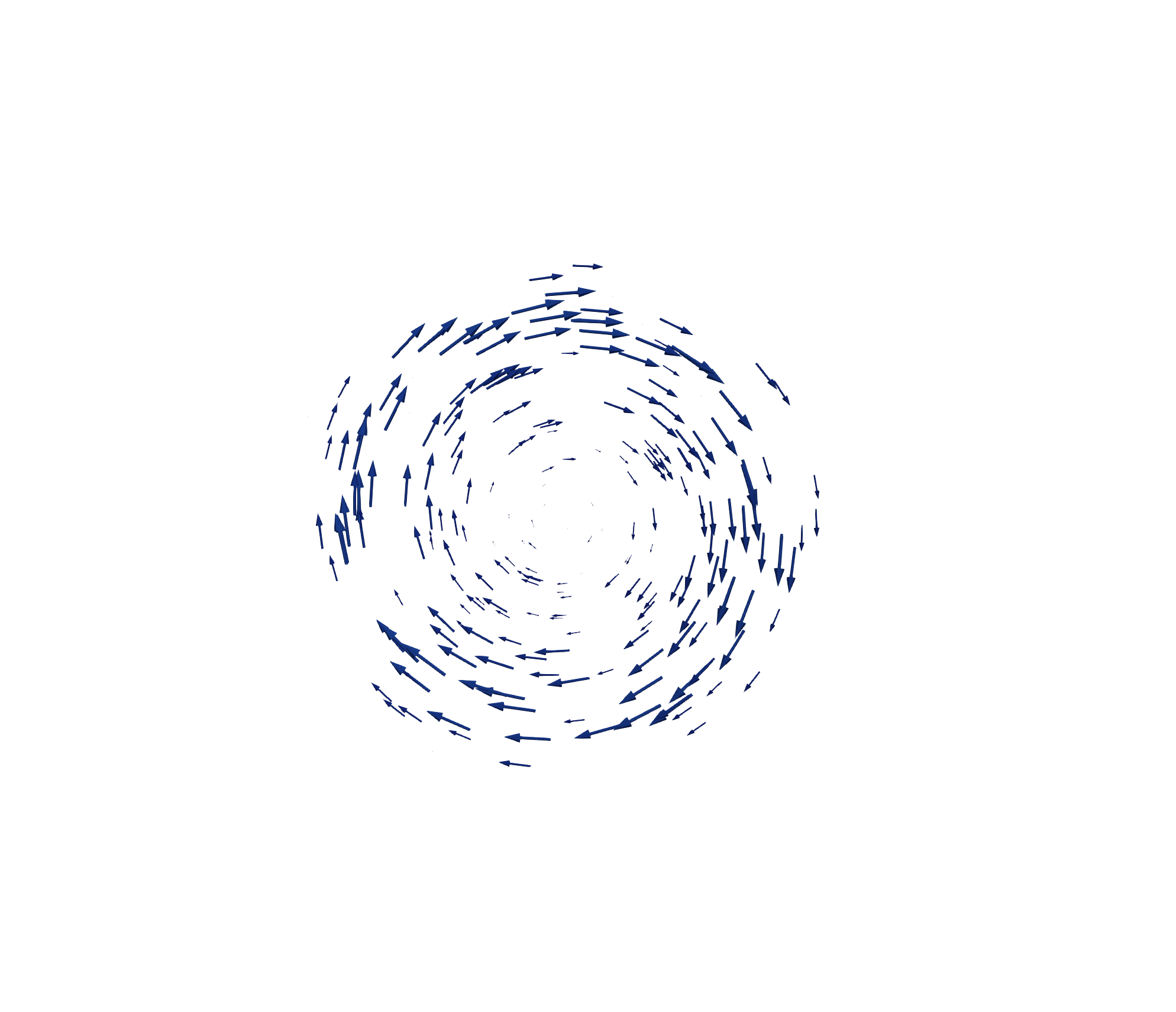}}
    \subfloat[$h = 1/20$]{\includegraphics[scale=0.1]{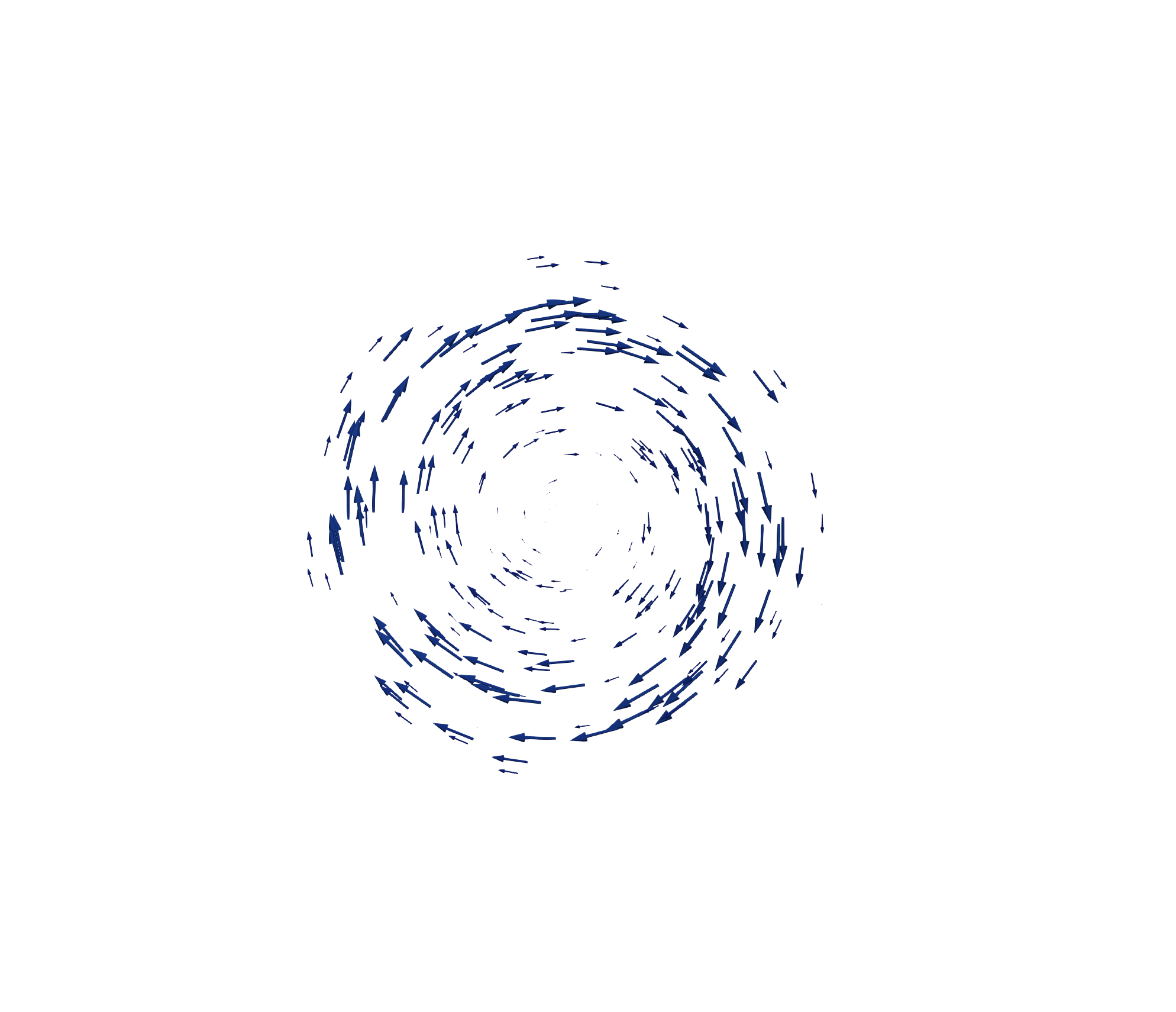}} 
    \caption{Velocity vector field at $t = 1$.}
    \label{fig:velocity_sphere}
\end{figure}

In the next scenario, we visually confirm the convergence of the numerical method over the unit cube by observing the magnetic field instead. We use a new set of exact solutions that account for the boundary conditions on the new domain, and these exact solutions take the form
\begin{subequations}
\begin{alignat}{1}
\vec{v}(t,\vec{x}) &= 2 \pi e^t \left(\sin ^2(2 \pi  x) \sin (2 \pi  y) \cos (2 \pi  y) \sin ^2(2 \pi  z),-\sin (2 \pi  x) \cos (2 \pi  x) \sin ^2(2 \pi  y) \sin ^2(2 \pi  z),0\right)^{\top}, \label{eqn:exact-soln-2a} \\
    p(t,\vec{x}) &= e^t \sin (2 \pi  x) \sin (2 \pi  y) \sin (2 \pi  z), \label{eqn:exact-soln-2b} \\
    \vec{B}(t,\vec{x}) &= 2\pi e^t \left( \sin ^2(\pi  x) \sin (\pi  y) \cos (\pi  y) \sin ^2(\pi  z),- \sin (\pi  x) \cos (\pi  x) \sin ^2(\pi  y) \sin ^2(\pi  z),0\right)^{\top} \label{eqn:exact-soln-2c} \\
    \vec{m}(t,\vec{x}) &= (\cos t, 0, \sin t)^{\top}. \label{eqn:exact-soln-2d}
\end{alignat}
\end{subequations}

The results are given in Figure \ref{fig:magnetic_field_cube}, where we have set
$t = 1/2$ and have let $\tau, h \to 0$. Figure \ref{fig:magnetic_field_cube} is of the same nature as Figure \ref{fig:velocity_sphere} and shows the stability of the magnetic field as $\tau, h \to 0$.

\begin{figure}
    \centering
    \subfloat[$h=1/8$]{\includegraphics[scale=0.1]{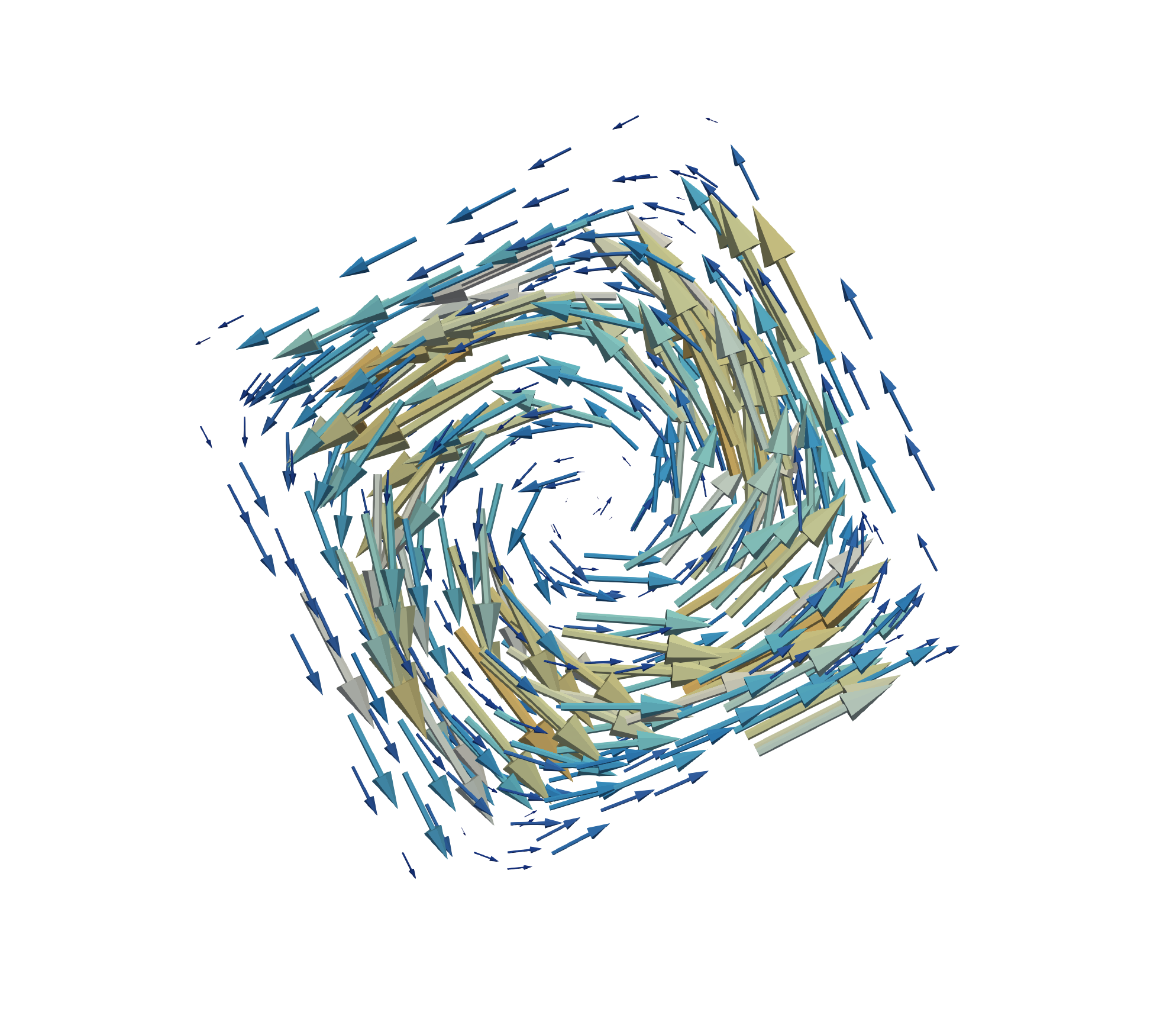}} 
    \subfloat[$h=1/12$]{\includegraphics[scale=0.1]{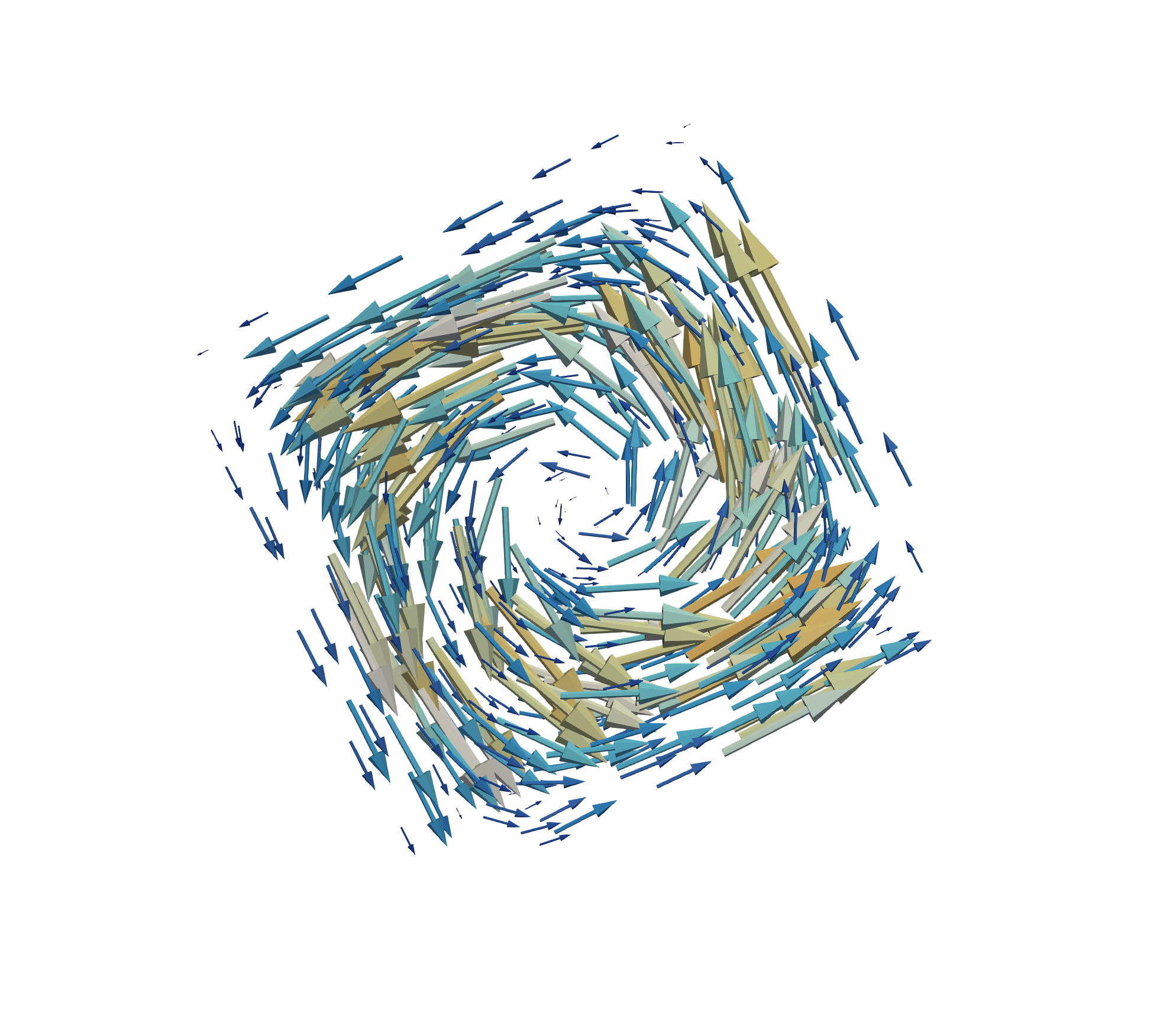}}\\
    \subfloat[$h = 1/16$]{\includegraphics[scale=0.1]{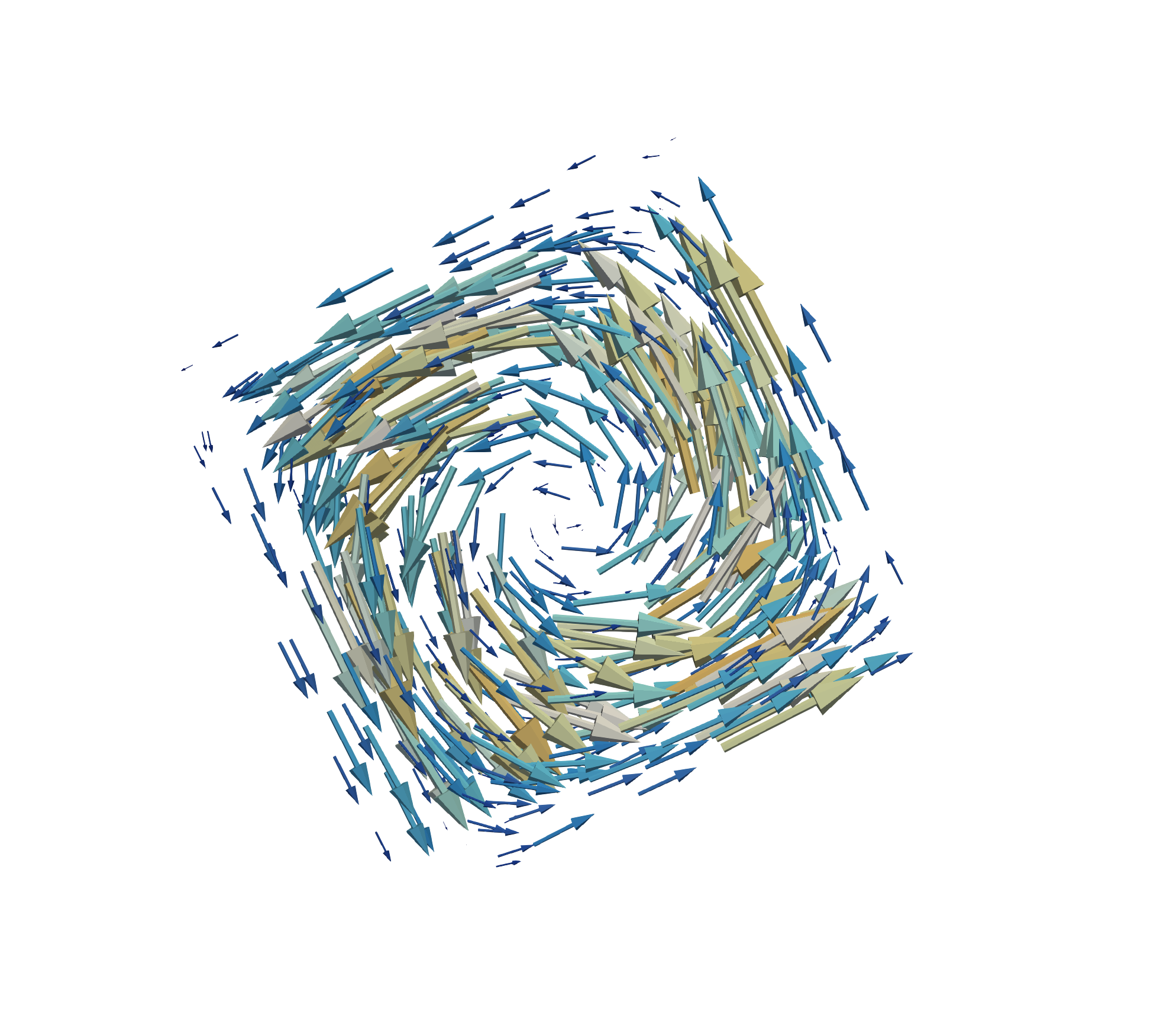}}
    \subfloat[$h = 1/20$]{\includegraphics[scale=0.1]{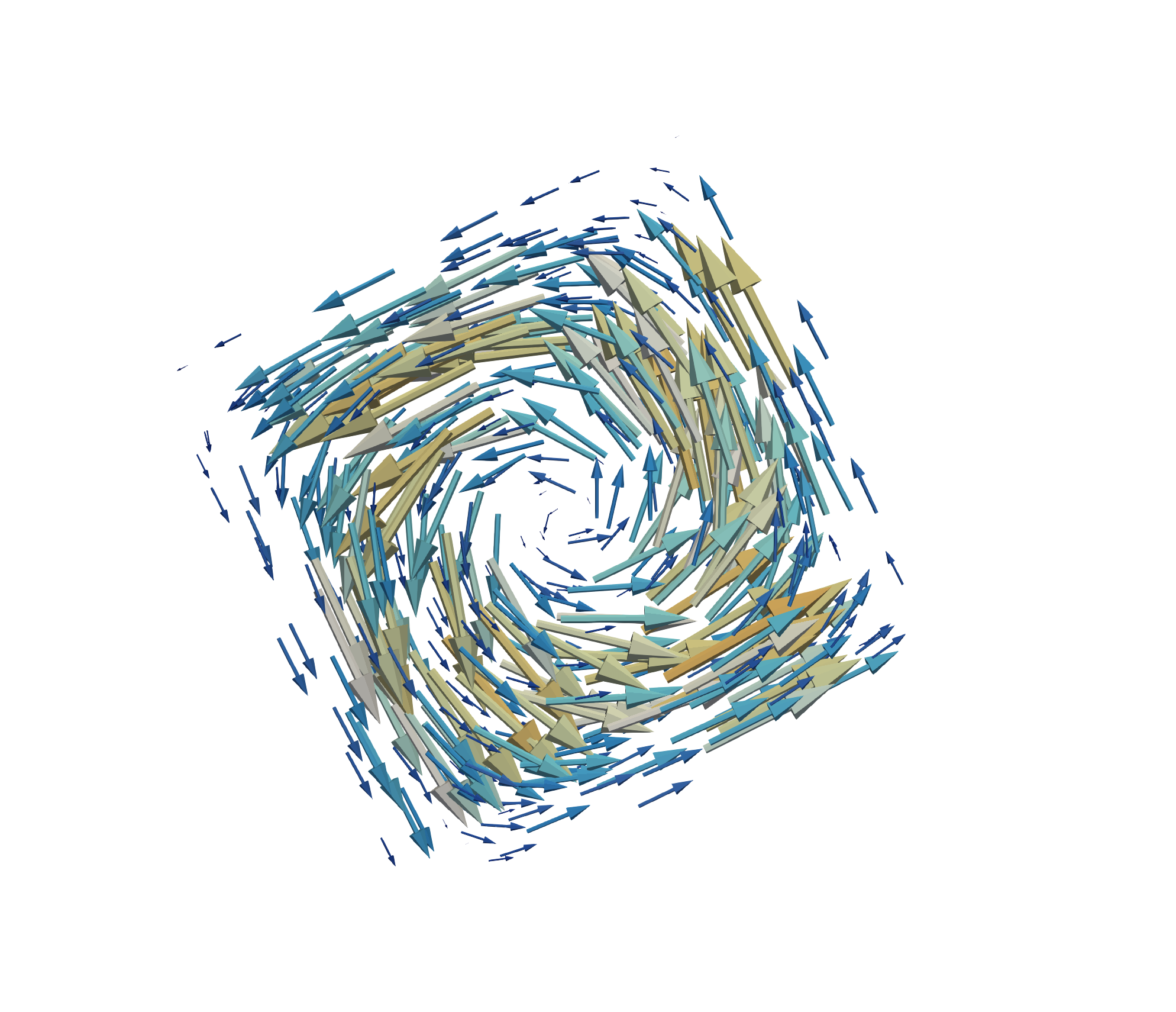}} 
    \caption{Magnetic field vector field at $t = 1/2$.}
    \label{fig:magnetic_field_cube}
\end{figure}

In the next two scenarios we calculate the order of convergence for this
numerical method. The linear system
\eqref{eqn:discrete-velocity-formulation}--\eqref{eqn:discrete-magnetisation-formulation}
is solved exclusively on the sphere of radius $1/2$ centered at the origin, once
again using quadratic isoparametric elements for the boundary of the sphere. In
these scenarios, we work exclusively with the exact solution \eqref{eqn:exact-soln-1}.

Having set the time step condition $\tau = h$ with $\ell = k = r = 2$, in the optimal case, the error estimate should converge like $\tau + h^{\ell+1} + h^{k+1} + h^r = h + 2h^3 + h^2 = O(h)$. Figure \ref{fig:rate-Oh} demonstrates that we obtain an order of convergence of at least $O(h)$. To be precise, we observe a super-optimal order of convergence which we will discuss soon enough.

Next, by setting $\tau = h^3$ and using cubic finite elements for the
magnetisation, i.e., we set $r=3$, we expect the optimal order of convergence
$O(h^3)$. Figure \ref{fig:rate-Oh3} seems to support our theoretical results.

The slightly better numerical convergence rates in both Figure~\ref{fig:rate-Oh} and Figure~\ref{fig:rate-Oh3} may be due to the choice of our exact solutions. They are scaled by a factor of $e^t$ as $t$ grows, i.e., there is no spatial variation as time moves forward. Moreover, the exact solution for $\vec{m}$ lacks for dependence on the spatial variable. It is also noted that assumption~\eqref{eqn:tau-growth} may be just a technical requirement of our
proofs.

\begin{figure}
    \centering
    \includegraphics[width=0.5\linewidth]{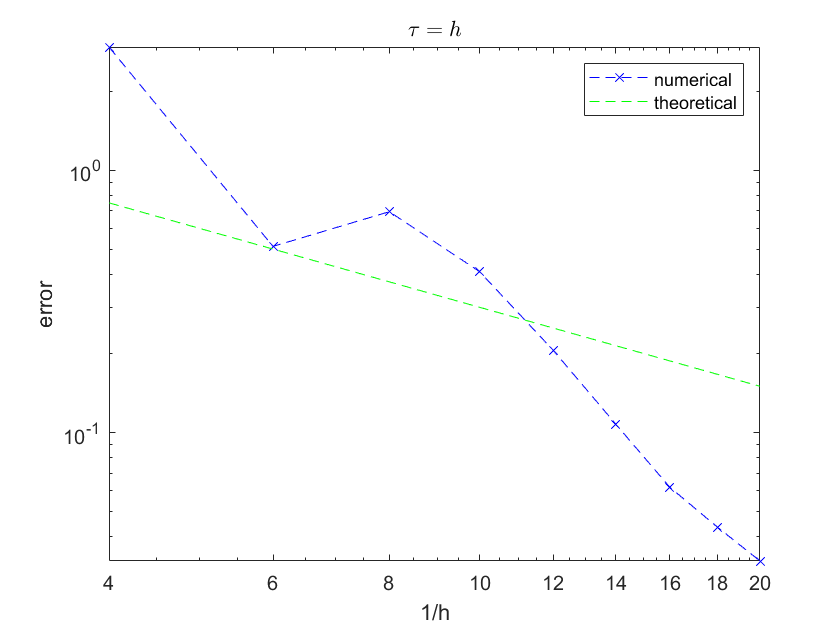}
    \caption{Rate of convergence for $\tau = h$}
    \label{fig:rate-Oh}
\end{figure}

\begin{figure}
    \centering
    \includegraphics[width=0.5\linewidth]{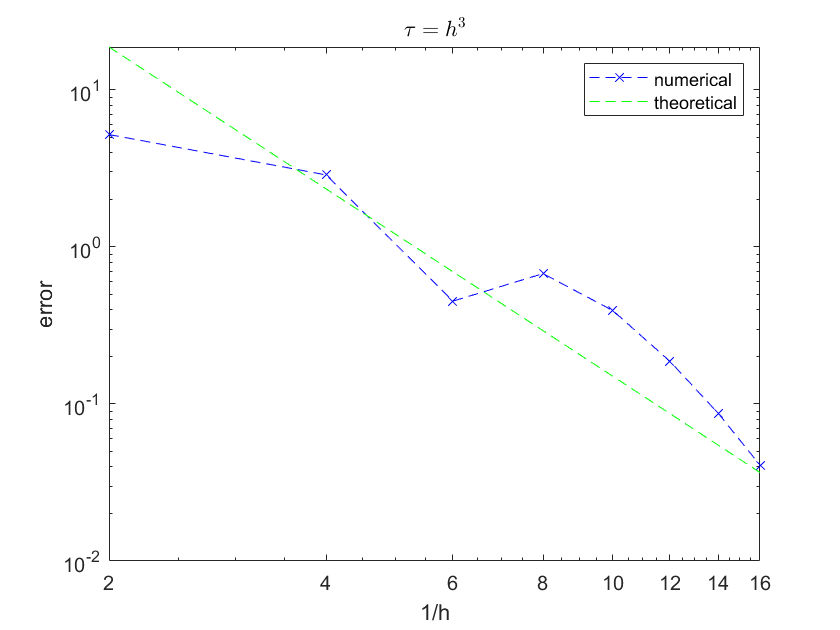}
    \caption{Rate of convergence for $\tau = h^3$}
    \label{fig:rate-Oh3}
\end{figure}

\section{Appendix} \label{sec:app}

In this section, we prove some results which are required in the proof
of the main result. In the following pages, $c(\varepsilon)$ denotes a positive constant that depends on $\varepsilon$, and may also depend on the domain and the initial data. This constant $c(\varepsilon)$ may take different values at different occurrences.

\subsection{Useful results} \label{sec:useful-results}

\begin{lemma} \label{lem:convective}
For any $\vec{\psi}, \vec{\varphi} \in \HB^1(\Omega)$ and $\vec{u} \in \HB_{\div}^1(\Omega)$, we have
\begin{equation*}
    \liprod{(\vec{u} \cdot \nabla) \vec{\psi}}{\vec{\varphi}} = - \liprod{(\vec{u} \cdot \nabla) \vec{\varphi}}{\vec{\psi}} + \langle (\vec{\psi} \vec{u}^{\top}) \vec{n}, \vec{\varphi}\rangle_{\LB^2(\partial\Omega)}.
\end{equation*}
\end{lemma}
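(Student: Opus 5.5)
The identity is an integration by parts in which the divergence of $\vec{u}$ drops out, so I will first prove it for smooth fields and then extend by density. Write the left-hand side in components. With $\nabla\vec{\psi}$ defined as in the preliminaries, $(\vec{u}\cdot\nabla)\vec{\psi} = (\nabla\vec{\psi})\vec{u}$, so
\begin{equation*}
    \liprod{(\vec{u} \cdot \nabla) \vec{\psi}}{\vec{\varphi}} = \sum_{i,j=1}^3 \int_\Omega u_j\, \partial_j \psi_i\, \varphi_i \, d\vec{x}.
\end{equation*}
Take $\vec{u},\vec{\psi},\vec{\varphi}$ smooth up to the boundary and apply the divergence theorem to the vector field $\big(u_j\psi_i\varphi_i\big)_{j=1}^3$ for each fixed $i$. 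This gives
\begin{equation*}
    \int_\Omega u_j \partial_j\psi_i\,\varphi_i
    = -\int_\Omega (\partial_j u_j)\psi_i\varphi_i - \int_\Omega u_j\psi_i\partial_j\varphi_i + \int_{\partial\Omega} u_j n_j \psi_i\varphi_i .
\end{equation*}
Summing over $i,j$ produces three pieces:
\begin{itemize}
\item the first term becomes $-\liprod{(\div\vec{u})\vec{\psi}}{\vec{\varphi}}$, which vanishes because $\div\vec{u}=0$ (the defining property of $\HB^1_{\div}(\Omega)$);
\item the second term becomes $-\liprod{(\vec{u}\cdot\nabla)\vec{\varphi}}{\vec{\psi}}$;
\item the boundary term is $\int_{\partial\Omega}(\vec{u}\cdot\vec{n})(\vec{\psi}\cdot\vec{\varphi})$, which equals $\langle(\vec{\psi}\vec{u}^{\top})\vec{n},\vec{\varphi}\rangle_{\LB^2(\partial\Omega)}$ since $(\vec{\psi}\vec{u}^\top)\vec{n} = (\vec{u}\cdot\vec{n})\vec{\psi}$.
\end{itemize}
This is the claimed identity for smooth fields.

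To pass to the stated regularity I will use a density argument. Since $\Omega$ is Lipschitz, $C^\infty(\overline\Omega)$ is dense in $\HB^1(\Omega)$, so $\vec{\psi},\vec{\varphi}$ can be approximated by smooth fields. Both volume terms are trilinear and continuous on $\HB^1$: in three dimensions $\HB^1 \hookrightarrow \LB^4$, so Hölder's inequality with exponents $(4,2,4)$ bounds each by $\hnorm{\vec{u}}{1}\hnorm{\vec{\psi}}{1}\hnorm{\vec{\varphi}}{1}$. The boundary term is also continuous, because the trace maps $\HB^1(\Omega)$ into $\LB^4(\partial\Omega)$ (indeed into $H^{1/2}(\partial\Omega)\hookrightarrow L^4(\partial\Omega)$ on the two-dimensional boundary), which controls the triple product on $\partial\Omega$.

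The only step needing care is approximating $\vec{u}$ while keeping $\div\vec{u}=0$. I see two routes:
\begin{itemize}
\item \textbf{Avoid it.} Prove the identity for smooth $\vec{u}$ with the extra term $-\liprod{(\div\vec{u})\vec{\psi}}{\vec{\varphi}}$ retained. That term is also continuous in $\HB^1$, so approximating $\vec{u}$ by arbitrary smooth fields gives the general identity with this term present. It then vanishes in the limit because $\div\vec{u}=0$ for the actual $\vec{u}$. I will use this route.
\item \textbf{Solenoidal approximation.} Approximate $\vec{u}$ by smooth divergence-free fields directly; this is possible but more delicate.
\end{itemize}
Taking limits in all terms completes the proof.
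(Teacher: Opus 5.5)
Your proposal is correct and is essentially the paper's argument. The paper writes $(\vec{u}\cdot\nabla)\vec{\psi} = \div(\vec{\psi}\vec{u}^{\top})$, which uses $\div\vec{u}=0$, then applies Green's formula to this tensor field and identifies $(\vec{\psi}\vec{u}^{\top})\cdot\nabla\vec{\varphi} = [(\vec{u}\cdot\nabla)\vec{\varphi}]\cdot\vec{\psi}$; that is your componentwise product-rule computation in matrix form. The only addition on your side is the density argument, which keeps the $\div\vec{u}$ term until the limit and justifies the integration by parts at $\HB^1$ regularity on a Lipschitz domain, something the paper leaves implicit.
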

\begin{proof}
First, note that
\begin{align*}
    (\vec{u} \cdot \nabla) \vec{\psi} = \div \left(\vec{\psi} \vec{u}^{\top}\right).
\end{align*}
Hence, integrating by parts we obtain
\begin{align*}
    \liprod{(\vec{u} \cdot \nabla) \vec{\psi}}{\vec{\varphi}} &= -\liprod{\vec{\psi} \vec{u}^{\top}}{\nabla \vec{\varphi}} + \left \langle \left(\vec{\psi} \vec{u}^{\top}\right) \vec{n}, \vec{\varphi} \right \rangle_{\LB^2(\partial\Omega)}.
\end{align*}
A simple calculation reveals that
\begin{equation*}
    \left(\vec{\psi} \vec{u}^{\top}\right) \cdot \nabla \vec{\varphi} = [(\vec{u} \cdot \nabla)\vec{\varphi}] \cdot \vec{\psi},
\end{equation*}
proving the lemma.
\end{proof}

\begin{lemma} \label{lem:curl-estimate}
	There exists a positive constant $C_{\Omega}$ depending only on $\Omega$
	such that for any $\vec{u} \in \HB_n^1(\Omega)$, we have
\begin{equation*}
    \hnorm{\vec{u}}{1} 
    \le 
    C_{\Omega} \big(\lnorm{\curl \vec{u}}{2}^2 + \lnorms{\div
    \vec{u}}{2}^2\big).
\end{equation*}
\end{lemma}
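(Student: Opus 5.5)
The plan is to prove the (intended) squared form $\hnorm{\vec{u}}{1}^2 \le C_{\Omega}\big(\lnorm{\curl \vec{u}}{2}^2 + \lnorms{\div \vec{u}}{2}^2\big)$. Since both sides are quadratic in $\vec{u}$, this is the only scale-invariant reading. I would split it into two steps: a Gaffney-type bound for the full gradient, followed by a Poincaré-type bound that removes the lower-order $\LB^2$ term.

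\textbf{Step 1 (gradient bound).} Since $\Omega$ is a bounded convex polyhedron, I would invoke the classical result for convex domains (Grisvard, or Girault--Raviart, Theorem 3.9 and its corollaries). It states that for every $\vec{u} \in \HB_n^1(\Omega)$,
\begin{equation*}
    \lnorm{\nabla \vec{u}}{2}^2 \le \lnorm{\curl \vec{u}}{2}^2 + \lnorms{\div \vec{u}}{2}^2 .
\end{equation*}
The proof is first done for smooth $\vec{u}$ on a smooth convex domain. There one integrates by parts twice and obtains the identity $\lnorm{\nabla\vec{u}}{2}^2 = \lnorm{\curl\vec{u}}{2}^2 + \lnorms{\div\vec{u}}{2}^2 - \int_{\partial\Omega} \mathcal{B}(\vec{u}_\tau,\vec{u}_\tau)\,ds$. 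Here $\mathcal{B}$ is the second fundamental form, which is nonnegative by convexity. The identity is then extended to the polyhedron by approximating $\Omega$ from inside by smooth convex domains. This approximation is the delicate point: the constant must not degenerate, which holds because the curvature term always has a favourable sign. Alternatively, one can cite $H^1$-regularity of the space $\{\vec{u}\in\LB^2:\curl\vec{u}\in\LB^2,\ \div\vec{u}\in L^2,\ \vec{u}\cdot\vec{n}=0\}$ on convex domains, which yields $\hnorm{\vec{u}}{1}^2 \le C(\lnorm{\vec{u}}{2}^2+\lnorm{\curl\vec{u}}{2}^2+\lnorms{\div\vec{u}}{2}^2)$.

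\textbf{Step 2 (removing $\lnorm{\vec{u}}{2}$).} I would argue by contradiction and compactness. Suppose there are $\vec{u}_j \in \HB_n^1(\Omega)$ with $\hnorm{\vec{u}_j}{1}=1$ and $\lnorm{\curl\vec{u}_j}{2}+\lnorms{\div\vec{u}_j}{2}\to 0$. By Rellich's theorem, a subsequence converges weakly in $\HB^1$ and strongly in $\LB^2$ to some $\vec{u}$. This limit satisfies $\curl\vec{u}=\vec{0}$, $\div\vec{u}=0$ and $\vec{u}\cdot\vec{n}=0$, the last because the normal trace is weakly continuous. Since $\Omega$ is convex, it is simply connected, so $\vec{u}=\nabla\phi$. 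Then $\phi$ solves $\Delta\phi=0$ with $\partial_{\vec{n}}\phi=0$, and $\partial\Omega$ is connected, so $\phi$ is constant and $\vec{u}=\vec{0}$. In other words, the space of harmonic Neumann fields is trivial. Applying Step 1 to $\vec{u}_j-\vec{u}_k$ shows that the sequence is Cauchy in $\HB^1$. Hence it converges strongly to $\vec{0}$, contradicting $\hnorm{\vec{u}_j}{1}=1$. Combining the two steps gives the claimed inequality with a constant $C_{\Omega}$ depending only on $\Omega$.

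\textbf{Main obstacle.} The main obstacle is Step 1 on a nonsmooth polyhedral domain, i.e.\ justifying the boundary-curvature argument or the $H^1$ regularity without smoothness. I would not reprove this; I would cite the convex-domain result of Grisvard and Girault--Raviart. Step 2 is standard once the topological facts (simple connectivity and connected boundary of a convex set) are noted.
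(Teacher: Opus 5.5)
Your proposal is correct and follows the paper's route. The paper proves the lemma purely by citation: Amrouche et al.\ (Theorem 2.17) for the $H^1$ bound on convex domains, and Girault--Raviart (Lemma 3.6) for removing the $\LB^2$ term on simply connected domains. You cite the former, reprove the latter by the standard compactness argument, and rightly read the intended inequality as having $\hnorm{\vec{u}}{1}^2$ on the left, since the literal version fails under the scaling $\vec{u}\mapsto t\vec{u}$, $t\to 0$.

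One small correction concerns Step 2. From $\vec{u}=\nabla\phi$ with $\Delta\phi=0$ and $\partial_{\vec{n}}\phi=0$ you get $\lnorm{\nabla\phi}{2}^2=0$ directly, so only the simple connectivity of $\Omega$ is used. Connectedness of $\partial\Omega$ plays no role for fields with vanishing normal component.
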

\begin{proof}
This result follows from \cite[Theorem 2.17]{amrouche1998-article} and \cite[Lemma 3.6, page 53]{girault1986-book}.
\end{proof}

\begin{lemma}
For any $\vec{u}_h \in \V_h^m(\Omega)$ we have
\begin{align} \label{eqn:inverse-estimates}
    \lnorm{\vec{u}_h}{\infty} + \wnorm{\vec{u}_h}{1}{3} &\lsim h^{-1/2} \hnorm{\vec{u}_h}{1},
\end{align}
where the constant may depend on $m$ and $\Omega$.
\end{lemma}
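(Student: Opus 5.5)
The plan is to prove the two bounds separately, element by element, and then pass to the global bound using quasi-uniformity. For a tetrahedron $K \in \mathcal{T}_h$ we use the affine map $F_K(\hat{\vec{x}}) = B_K\hat{\vec{x}} + \vec{b}_K$ from the reference tetrahedron $\hat K$. Since $\mathcal{T}_h$ is globally quasi-uniform, we have $|\det B_K| \simeq h^3$, $\|B_K\| \lsim h$ and $\|B_K^{-1}\| \lsim h^{-1}$, with constants independent of $K$ and $h$. On $\hat K$ the space $P_m(\hat K)$ is finite dimensional, so all norms on it are equivalent. This gives the local inverse estimate
\[
\|\vec{u}_h\|_{\W^{j,p}(K)} \lsim h^{3/p - 3/q - (j-i)}\|\vec{u}_h\|_{\W^{i,q}(K)}, \qquad 0\le i\le j,\ 1\le q\le p\le\infty,
\]
obtained by the usual scaling: pull back to $\hat K$, apply norm equivalence, and push forward. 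The constant depends only on $m$ and the shape-regularity.

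For the $\LB^\infty$ term we take $j=i=0$, $p=\infty$, $q=2$, which gives $\|\vec{u}_h\|_{\LB^\infty(K)} \lsim h^{-3/2}\|\vec{u}_h\|_{\LB^2(K)}$. This alone yields only $h^{-3/2}\|\vec{u}_h\|_{\LB^2}$, which is too weak. We therefore combine the inverse estimate with the Sobolev embedding $\HB^1(\Omega)\hookrightarrow \LB^6(\Omega)$, valid since $\Omega$ is Lipschitz. Using the local estimate with $p=\infty$, $q=6$ gives $\|\vec{u}_h\|_{\LB^\infty(K)} \lsim h^{-1/2}\|\vec{u}_h\|_{\LB^6(K)}$. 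Taking the maximum over $K$ and bounding the local $\LB^6(K)$ norm by the global one, we get
\[
\lnorm{\vec{u}_h}{\infty} \lsim h^{-1/2}\lnorm{\vec{u}_h}{6} \lsim h^{-1/2}\hnorm{\vec{u}_h}{1}.
\]

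For the $\W^{1,3}$ term we apply the local estimate to $\nabla\vec{u}_h$, which is piecewise polynomial of degree $m-1$, with $p=3$, $q=2$. This gives $\|\nabla \vec{u}_h\|_{\LB^3(K)} \lsim h^{3/3-3/2}\|\nabla\vec{u}_h\|_{\LB^2(K)} = h^{-1/2}\|\nabla\vec{u}_h\|_{\LB^2(K)}$. Summing cubes over $K$, we use that $\sum_K a_K^{3/2} \le (\sum_K a_K)^{3/2}$ for $a_K = \|\nabla\vec{u}_h\|_{\LB^2(K)}^2 \ge 0$. This yields $\lnorm{\nabla\vec{u}_h}{3} \lsim h^{-1/2}\lnorm{\nabla \vec{u}_h}{2}$. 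The zeroth-order part $\lnorm{\vec{u}_h}{3}$ is bounded by $\hnorm{\vec{u}_h}{1}$ through the embedding $\HB^1\hookrightarrow\LB^6\hookrightarrow\LB^3$, since $\Omega$ is bounded. For $h\le h_0$ we have $1\lsim h^{-1/2}$, so the two parts combine to give the claimed bound.

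The only real care needed is in the $\LB^\infty$ bound: a direct $\LB^2$-to-$\LB^\infty$ inverse estimate loses $h^{-3/2}$, and routing through $\LB^6$ is what recovers $h^{-1/2}$. The remaining step is to make sure the local scaling constants are uniform in $K$, which is exactly what quasi-uniformity of $\mathcal{T}_h$ provides. The dependence on $m$ enters only through the norm equivalence on $P_m(\hat K)$, and the dependence on $\Omega$ only through the Sobolev constant.
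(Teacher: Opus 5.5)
Your proof is correct and follows essentially the same argument as the paper, which simply cites the global inverse estimate \cite[Theorem 4.5.11]{brenner2007-book}; your element-wise scaling plus summation reproduces that theorem, which gives the $\W^{1,3}$ bound directly with factor $h^{\min(0,3/3-3/2)} = h^{-1/2}$. For the $\LB^\infty$ term, routing through $\HB^1(\Omega)\hookrightarrow\LB^6(\Omega)$ and an $\LB^6$-to-$\LB^\infty$ inverse estimate is exactly the step the citation leaves implicit, since a direct $\LB^2$-to-$\LB^\infty$ inverse estimate only gives $h^{-3/2}$; also, the restriction $h\le h_0$ in your final step is unnecessary, because $h\le\operatorname{diam}\Omega$ always gives $1\lsim h^{-1/2}$.
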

\begin{proof}
See, for example, \cite[Theorem 4.5.11]{brenner2007-book}.
\end{proof}


The projections defined in Subsection \ref{sec:fem-spaces} have the following approximation and stability properties.


\begin{lemma}[$\LB^2$ Projection] \label{lem:L2-proj-estimates}
There exists $h_0>0$ such that for any $\vec{u} \in \HB^{s}(\Omega)$, $1 \le s \le r+1$, and
for any $h \in (0, h_0]$ the $\LB^2$-orthogonal projection $\Lproj \vec{u} \in \V_{\vec{m}}$ of
$\vec{u}$ satisfies the approximation property
\begin{align*}
    \lnorm{\vec{u} - \Lproj \vec{u}}{2} + h \hnorm{\vec{u} - \Lproj \vec{u}}{1}
    \lsim h^{s} \hnorm{\vec{u}}{s}.
\end{align*}
\end{lemma}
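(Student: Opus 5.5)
The plan is to derive the estimate from the Bramble--Hilbert lemma, via a quasi-interpolant into $\V_{\vec{m}}$, and then use the best-approximation property of the $\LB^2$ projection. The boundary of $\Omega$ needs no special treatment, because $\V_{\vec{m}} = \V_h^r(\Omega)$ carries no boundary constraint and $\Omega$ is polyhedral, so $\mathcal{T}_h$ covers $\Omega$ exactly.

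First, let $I_h:\HB^1(\Omega)\to\V_{\vec{m}}$ be the Scott--Zhang interpolant, applied componentwise. It is well defined for $s\ge 1$ and needs no point values. By the Bramble--Hilbert lemma on each element patch, combined with shape regularity, it satisfies, for $1\le s\le r+1$,
\begin{equation*}
\lnorm{\vec{u}-I_h\vec{u}}{2} + h\hnorm{\vec{u}-I_h\vec{u}}{1} \lesssim h^{s}\hnorm{\vec{u}}{s}.
\end{equation*}
Second, since $\Lproj$ is the $\LB^2$-orthogonal projection onto $\V_{\vec{m}}$, we have $\lnorm{\vec{u}-\Lproj\vec{u}}{2}\le \lnorm{\vec{u}-I_h\vec{u}}{2}\lesssim h^{s}\hnorm{\vec{u}}{s}$. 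This settles the $\LB^2$ part.

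Third, for the $\HB^1$ part, write $\vec{u}-\Lproj\vec{u} = (\vec{u}-I_h\vec{u}) + \Lproj(I_h\vec{u}-\vec{u})$, using $\Lproj I_h\vec{u}=I_h\vec{u}$. The first term is already controlled. For the second term, which lies in $\V_{\vec{m}}$, apply the global inverse estimate $\hnorm{\vec{w}_h}{1}\lesssim h^{-1}\lnorm{\vec{w}_h}{2}$. This holds because $\mathcal{T}_h$ is globally quasi-uniform, and it is the same inverse-inequality machinery as \eqref{eqn:inverse-estimates}, cf. \cite[Theorem 4.5.11]{brenner2007-book}. Together with the $\LB^2$-stability of $\Lproj$, this gives
\begin{equation*}
h\hnorm{\Lproj(I_h\vec{u}-\vec{u})}{1}\lesssim \lnorm{\Lproj(I_h\vec{u}-\vec{u})}{2} \le \lnorm{I_h\vec{u}-\vec{u}}{2}\lesssim h^{s}\hnorm{\vec{u}}{s}.
\end{equation*}
Adding the two bounds proves the lemma. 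The restriction $h\le h_0$ only ensures that we are in the asymptotic regime where the constants are uniform.

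The main obstacle is the $\HB^1$ bound. It requires both $\HB^1$-stability of the interpolant at the low-regularity endpoint $s=1$, which is why Scott--Zhang is used rather than nodal interpolation, and global quasi-uniformity for the inverse estimate. Both hypotheses are available here.
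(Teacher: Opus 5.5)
Your proof is correct. Best approximation gives the $\LB^2$ bound, and the splitting $\vec{u}-\Lproj\vec{u} = (\vec{u}-I_h\vec{u}) + \Lproj(I_h\vec{u}-\vec{u})$ combined with the inverse inequality on the globally quasi-uniform mesh gives the $\HB^1$ bound. The paper has no argument of its own and simply cites \cite{thomee2006-book}; your write-up is the standard argument behind that citation, and it makes explicit the adaptation to the unconstrained space $\V_{\vec{m}}=\V_h^r(\Omega)$.
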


\begin{proof}
The proof can be found in \cite{thomee2006-book}.
\end{proof}


\begin{lemma}[Ritz Projection] \label{lem:elliptic-proj-estimates}
There exists $h_0>0$ such that if $\vec{u} \in \W^{s,p}(\Omega)$, where $1 \le s \le r+1$ and $2 \leq p < \infty$, then for any $h \in (0,h_0]$ the Ritz projection $\ellipticProj \vec{u} \in \V_{\vec{m}}$ of $\vec{u}$ satisfies the approximation and stability properties
\begin{align}
    \lnorm{\vec{u} - \ellipticProj \vec{u}}{p} + h \wnorm{\vec{u} - \ellipticProj \vec{u}}{1}{p} &\lsim h^{s} \wnorm{\vec{u}}{s}{p}. \label{eqn:ritz-proj-estimates-approximation} 
\end{align}
In particular, if $\vec{u}\in\HB^2(\Omega)$, then
\begin{equation}
    \lnorm{\ellipticProj \vec{u}}{\infty} + \wnorm{\ellipticProj \vec{u}}{1}{3} \lsim \hnorm{\vec{u}}{2}. \label{eqn:ritz-proj-estimates-stability}
\end{equation}
Moreover, the Ritz projection satisfies the identity $\Delta_h \ellipticProj = \Lproj \Delta$, where $\Delta_h$ is the discrete Laplacian defined by \eqref{eqn:discrete-laplacian}.
\end{lemma}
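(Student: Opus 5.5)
The plan is to reduce everything to standard finite element approximation theory plus an Aubin--Nitsche duality argument for the Neumann problem on a convex polyhedron, and then to derive the stability bound and the identity $\Delta_h\ellipticProj=\Lproj\Delta$ as corollaries.

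\textbf{Approximation estimate.} Since $\ellipticProj$ is defined componentwise by the scalar Neumann Ritz problem with the mean-value normalisation, I would work component by component.

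For $p=2$, the $\HB^1$-seminorm error is quasi-optimal by Galerkin orthogonality. Because $\ellipticProj\vec u-\vec u$ has zero mean, Poincaré's inequality upgrades this to the full $\HB^1$ norm. Comparing with the Scott--Zhang (or Lagrange) interpolant $I_h\vec u$ then gives $\hnorm{\vec u-\ellipticProj\vec u}{1}\lsim h^{s-1}\hnorm{\vec u}{s}$.

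For the $\LB^2$ bound, I would use duality. Solve $-\Delta\vec z=\vec u-\ellipticProj\vec u$ in $\Omega$, with $\partial_{\vec n}\vec z=0$ on $\partial\Omega$ and $\vec z$ of zero mean; this is solvable because the data has zero mean. Convexity of $\Omega$ gives $\hnorm{\vec z}{2}\lsim\lnorm{\vec u-\ellipticProj\vec u}{2}$. Galerkin orthogonality then yields the extra factor $h$.

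For $2<p<\infty$, I would cite the $\W^{1,p}$-stability of the Neumann Ritz projection on convex polyhedra (Rannacher--Scott type estimates, or Guzmán--Leykekhman--Rossmann--Schatz for polyhedral domains). These give $\wnorm{\ellipticProj\vec u}{1}{p}\lsim\wnorm{\vec u}{1}{p}$, and hence quasi-optimality in $\W^{1,p}$ after subtracting $I_h\vec u$. The $\LB^p$ estimate would again come from duality: test against $\vec g\in\LB^{p'}$ and use $\W^{2,p'}$ regularity of the Neumann problem. Since $p'\le2$, convexity supplies that regularity.

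The main obstacle is exactly this $\W^{1,p}$ stability on a nonsmooth domain. It is also where the paper's convexity hypothesis and the restriction $h\le h_0$ enter. I would not reprove it but invoke the literature, noting that $p$ is kept finite precisely to avoid the $\log h$ factor that appears at $p=\infty$.

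\textbf{Stability estimate for $\vec u\in\HB^2(\Omega)$.} Take $s=2$, $p=2$. For the $\W^{1,3}$ part, write $\ellipticProj\vec u=(\ellipticProj\vec u-I_h\vec u)+I_h\vec u$. The inverse estimate \eqref{eqn:inverse-estimates} gives
\[
\wnorm{\ellipticProj\vec u-I_h\vec u}{1}{3}\lsim h^{-1/2}\hnorm{\ellipticProj\vec u-I_h\vec u}{1}\lsim h^{-1/2}\,h\,\hnorm{\vec u}{2}.
\]
Meanwhile $\wnorm{I_h\vec u}{1}{3}\lsim\hnorm{\vec u}{2}$ by stability of the interpolant together with $\HB^2\hookrightarrow\W^{1,3}$ in three dimensions.

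The $\LB^\infty$ part is handled the same way. The same inverse estimate applies, and $\lnorm{I_h\vec u}{\infty}\lsim\lnorm{\vec u}{\infty}\lsim\hnorm{\vec u}{2}$ by the Sobolev embedding.

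\textbf{The identity $\Delta_h\ellipticProj=\Lproj\Delta$.} Let $\vec u$ be in $\HB^2$ with $\partial_{\vec n}\vec u=0$, which is the setting where it is used, since the exact $\vec m$ satisfies \eqref{eqn:boundary-conditions d}. For every $\vec\chi_h\in\V_{\vec m}$, the definition \eqref{eqn:discrete-laplacian}, the definition of the Ritz projection, and integration by parts give
\[
\liprod{\Delta_h\ellipticProj\vec u}{\vec\chi_h}=-\liprod{\nabla\ellipticProj\vec u}{\nabla\vec\chi_h}=-\liprod{\nabla\vec u}{\nabla\vec\chi_h}=\liprod{\Delta\vec u}{\vec\chi_h}.
\]
The last equality is where the homogeneous Neumann condition is used to kill the boundary term. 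The final expression equals $\liprod{\Lproj\Delta\vec u}{\vec\chi_h}$, and since both sides lie in $\V_{\vec m}$, the identity follows.
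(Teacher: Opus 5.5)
Your proposal is correct and follows essentially the route the paper relies on: the paper's proof is only a citation to Brenner--Scott, and you reconstruct that standard theory (Galerkin orthogonality plus Aubin--Nitsche duality via convexity, $\W^{1,p}$-stability taken from the literature for $p>2$, and for \eqref{eqn:ritz-proj-estimates-stability} the same inverse-estimate argument the paper uses for the Stokes projection). Your added hypothesis $\partial_{\vec{n}}\vec{u}=\vec{0}$ for $\Delta_h\ellipticProj=\Lproj\Delta$ is a necessary correction to the statement as written, since otherwise the boundary term $\langle\partial_{\vec{n}}\vec{u},\vec{\chi}_h\rangle_{\LB^2(\partial\Omega)}$ survives (e.g.\ test with constant $\vec{\chi}_h$); it costs nothing here because the identity is only applied to the exact $\vec{m}$, which satisfies \eqref{eqn:boundary-conditions d}.
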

\begin{proof}
See, e.g., \cite{brenner2007-book}.
\end{proof}

\begin{lemma}[Stokes projection] \label{lem:stokes-proj-estimates}
Let $2 \le s \le \ell+1$. There exists a sufficiently small $h_0 > 0$ such that for all $h \in (0,h_0]$, $\vec{u} \in \HB^{s}(\Omega) \cap \HB_0^1(\Omega)$, and $q \in H^{s-1}(\Omega) \cap L_0^2(\Omega)$, if $(\hat{\vec{u}}_h, \hat{q}_h) \in \V_{\vec{v}}\times V_p$ is the Stokes projection of $(\vec{u},q)$ then
\begin{align}
    \lnorm{\vec{u} - \hat{\vec{u}}_h}{2} + h \hnorm{\vec{u} - \hat{\vec{u}}_h}{1} + \lnorm{q - \hat{q}_h}{2} &\lsim h^{s} \left(\hnorm{\vec{u}}{s} + \|q\|_{H^{s-1}(\Omega)}\right), \label{eqn:stokes-proj-estimates-approximation} \\
    \lnorm{\hat{\vec{u}}_h}{\infty} + \wnorm{\hat{\vec{u}}_h}{1}{3} &\lsim \hnorm{\vec{u}}{2} + \|q\|_{H^1(\Omega)}. \label{eqn:stokes-proj-estimates-stability}
\end{align}
\end{lemma}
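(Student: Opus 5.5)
The plan is to follow the classical Galerkin argument for the discrete Stokes problem. First I derive the velocity $\HB^1$ and pressure $\LB^2$ bounds from inf-sup stability. Then I sharpen the velocity estimate in $\LB^2$ by an Aubin--Nitsche duality argument, which uses the convexity of $\Omega$. Finally I treat the $\LB^\infty$ and $\W^{1,3}$ stability bounds \eqref{eqn:stokes-proj-estimates-stability} by comparison with an interpolant plus the inverse estimate \eqref{eqn:inverse-estimates}.

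\emph{Step 1 (energy estimate).} Let $(I_h\vec{u}, J_h q)\in\V_{\vec{v}}\times V_p$ be the Scott--Zhang interpolant of $\vec u$ and the $L^2$-projection of $q$. For Taylor--Hood with $\ell\ge2$ we have $\|\vec u-I_h\vec u\|_{\HB^1(\Omega)}\lsim h^{s-1}\hnorm{\vec u}{s}$ and $\|q-J_hq\|_{L^2(\Omega)}\lsim h^{s-1}\|q\|_{H^{s-1}(\Omega)}$. I subtract the interpolants from \eqref{eqn:stokes-projection-velocity}--\eqref{eqn:stokes-projection-div-velocity} and test with the discrete errors. The uniform discrete inf-sup condition for $(\V_{\vec v},V_p)$ then gives the quasi-optimality bound
\[
\hnorm{\vec u-\hat{\vec u}_h}{1}+\lnorm{q-\hat q_h}{2}\lsim \inf_{\vec w_h}\hnorm{\vec u-\vec w_h}{1}+\inf_{r_h}\|q-r_h\|_{L^2(\Omega)},
\]
and hence the bound $h^{s-1}(\hnorm{\vec u}{s}+\|q\|_{H^{s-1}(\Omega)})$. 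This already yields the $h\hnorm{\vec u-\hat{\vec u}_h}{1}$ term in \eqref{eqn:stokes-proj-estimates-approximation}.

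\emph{Step 2 (duality for velocity).} Let $(\vec z,\pi)$ solve the Stokes problem with right-hand side $\vec u-\hat{\vec u}_h$ and homogeneous Dirichlet data. Since $\Omega$ is a convex polyhedron, $\hnorm{\vec z}{2}+\|\pi\|_{H^1(\Omega)}\lsim\lnorm{\vec u-\hat{\vec u}_h}{2}$. I test the dual problem with the error and use Galerkin orthogonality to subtract interpolants of $(\vec z,\pi)$. This gives $\lnorm{\vec u-\hat{\vec u}_h}{2}\lsim h\bigl(\hnorm{\vec u-\hat{\vec u}_h}{1}+\lnorm{q-\hat q_h}{2}\bigr)\lsim h^{s}(\cdots)$.

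\emph{Step 3 (pressure at order $h^s$).} This is the step I expect to be the main obstacle. The natural attempt is a second duality argument: pair $q-\hat q_h$ against the divergence of a discrete field furnished by the inf-sup condition, and exploit \eqref{eqn:stokes-projection-div-velocity} together with Step 2. However, this yields at best $\lnorm{q-\hat q_h}{2}\lsim h^{s-1}\|q\|_{H^{s-1}(\Omega)}$. The reason is that even $\inf_{r_h\in V_p}\|q-r_h\|_{L^2(\Omega)}$ is only $O(h^{s-1})$ for general $q\in H^{s-1}(\Omega)$, so no argument can beat the best-approximation error. I would therefore first check whether the extra power is truly available. Obtaining it would need more regularity, namely $q\in H^s(\Omega)$ with $s\le\ell$ (i.e.\ the case $s\le \ell$ and the $H^s$ norm of $q$ on the right). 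If it is not available, the pressure term of \eqref{eqn:stokes-proj-estimates-approximation} can only be proved in the form $h^{s}\|q\|_{H^{s}(\Omega)}$, using the $L^2$-best approximation of degree $\ell-1$ in the quasi-optimality bound of Step 1. In the subsequent analysis, where $\tilde p\in C([0,T];\HB^\ell(\Omega))$, this suffices for $s\le\ell$.

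\emph{Step 4 (stability).} Write $\hat{\vec u}_h=(\hat{\vec u}_h-I_h\vec u)+I_h\vec u$. For the interpolant, use the stability $\lnorm{I_h\vec u}{\infty}+\wnorm{I_h\vec u}{1}{3}\lsim\hnorm{\vec u}{2}$. For the discrete difference, use \eqref{eqn:inverse-estimates} together with Step 1 at $s=2$: $h^{-1/2}\hnorm{\hat{\vec u}_h-I_h\vec u}{1}\lsim h^{1/2}(\hnorm{\vec u}{2}+\|q\|_{H^1(\Omega)})$. This is bounded for $h\le h_0$ and gives \eqref{eqn:stokes-proj-estimates-stability}.
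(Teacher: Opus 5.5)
Your Steps 1, 2 and 4 follow the paper's route. The paper obtains \eqref{eqn:stokes-proj-estimates-approximation} by citing Girault--Raviart (Theorems 1.8--1.9), and those results rest on exactly the inf-sup quasi-optimality and Aubin--Nitsche argument you spell out. It proves \eqref{eqn:stokes-proj-estimates-stability} as in your Step 4: interpolant, inverse estimate \eqref{eqn:inverse-estimates}, and the $s=2$ velocity bound.

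Your suspicion in Step 3 is correct, and it exposes a defect in the statement rather than in your argument. Since $\hat q_h\in V_p$, we have $\lnorm{q-\hat q_h}{2}\ge\inf_{r_h\in V_p}\|q-r_h\|_{L^2(\Omega)}$, and this is genuinely of order $h^{s-1}$. For instance, take $s=\ell+1$, $\vec{u}=\vec{0}$ and $q$ a zero-mean polynomial of exact degree $\ell$. Then the pressure error is $\gtrsim h^{\ell}$, whereas the claimed right-hand side is $O(h^{\ell+1})$. The cited results only give $\lnorm{q-\hat q_h}{2}\lsim h^{s-1}\bigl(\hnorm{\vec{u}}{s}+\|q\|_{H^{s-1}(\Omega)}\bigr)$. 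So the statement should carry $h\lnorm{q-\hat q_h}{2}$ on the left, which is exactly what your Step 1 proves; the paper's citation does not supply the stronger bound either.

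One correction to your fallback. The bound $\lnorm{q-\hat q_h}{2}\lsim h^{s}\|q\|_{H^{s}(\Omega)}$ does not follow from your quasi-optimality estimate. That estimate also contains $\inf_{\vec{w}_h}\hnorm{\vec{u}-\vec{w}_h}{1}$, which is only $O(h^{s-1})$ for $\vec{u}\in\HB^s(\Omega)$. An $O(h^s)$ pressure estimate along this route therefore also needs $\hnorm{\vec{u}}{s+1}$, with $s\le\ell$.

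In any case, no repair is needed for the rest of the paper. The projection error $E_p^n$ never enters the error equations: only $e_p^n$ appears in \eqref{eqn:error-velocity}. That term drops out when testing with $\vec{e}_{\vec{v}}^n$, because of \eqref{eqn:error-velocity-divergence}. The velocity bounds that are actually used later are exactly those of your Steps 1--2.
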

\begin{proof}
Estimate \eqref{eqn:stokes-proj-estimates-approximation} follows from
\cite[Theorems 1.8--1.9, pp. 125--127]{girault1986-book} since Taylor-Hood
elements have the required approximation properties via the interpolation
operator and satisfy the inf-sup condition.
We briefly prove \eqref{eqn:stokes-proj-estimates-stability}.
Let $I_h\vec{u}\in\V_{\vec{v}}$ be the Lagrange interpolation of $\vec{u}$.
By using the triangle inequality,
\eqref{eqn:inverse-estimates},
\eqref{eqn:stokes-proj-estimates-approximation} with $s=2$, \cite[Corollary
4.4.24]{brenner2007-book}, and the embeddings
$\HB^2(\Omega) \hookrightarrow \W^{1,6}(\Omega) \hookrightarrow \W^{1,3}(\Omega)$ and $\HB^2(\Omega) \hookrightarrow \LB^\infty(\Omega)$, we have
\begin{align*}
	\lnorm{\hat{\vec{u}}_h}{\infty} 
	+
	\wnorm{\hat{\vec{u}}_h}{1}{3}
    	&\leq 
	\lnorm{\hat{\vec{u}}_h - I_h \vec{u}}{\infty} 
	+
	\wnorm{\hat{\vec{u}}_h - I_h \vec{u}}{1}{3}
	\\
	&\quad
	+
	\lnorm{I_h \vec{u}-\vec{u}}{\infty} 
	+
	\wnorm{I_h \vec{u}-\vec{u}}{1}{3}
	+
	\lnorm{\vec{u}}{\infty} 
	+
	\wnorm{\vec{u}}{1}{3}
    	\\
    	&\lsim 
    	h^{-1/2} 
	\left(
	\hnorm{\hat{\vec{u}}_h - \vec{u}}{1}  
	+
	\hnorm{\vec{u} - I_h\vec{u}}{1}  
	\right)
	\\
	&\quad
	+
	\lnorm{I_h \vec{u}-\vec{u}}{\infty} 
	+
	\wnorm{I_h \vec{u}-\vec{u}}{1}{3}
	+
	\lnorm{\vec{u}}{\infty} 
	+
	\wnorm{\vec{u}}{1}{3}
		\\
    	&\lsim 
    	h^{1/2} 
	\left(
	\hnorm{\vec{u}}{2}  
	+
	\hnorm{q}{1}  
	\right)
	+
	\hnorm{\vec{u}}{2}
	\\
	&\lsim
	\hnorm{\vec{u}}{2}
	+
	\hnorm{q}{1},
\end{align*}
completing the proof of the lemma.
\end{proof}

The Maxwell projection admits the following approximation and stability properties, which are well-known (see, e.g., \cite{ravindran2016-article}).

\begin{lemma}[Maxwell projection] \label{lem:maxwell-proj-estimates}
Let $1 \le s \le k+1$. There exists $h_0 > 0$ such that if $\vec{u} \in \HB^{s}(\Omega) \cap \HB_{n,\curl}^1(\Omega)$ then for any $h \in (0,h_0]$ the Maxwell projection $\maxwellProj \vec{u} \in \V_{\vec{B}}$ of $\vec{u}$ satisfies the approximation and stability properties
\begin{align}
    \lnorm{\vec{u} - \maxwellProj \vec{u}}{2} + h\hnorm{\vec{u} - \maxwellProj
    \vec{u}}{1} &\lsim h^{s} \hnorm{\vec{u}}{s}, \label{eqn:maxwell-proj-estimates-approximation} \\
    \lnorm{\maxwellProj \vec{u}}{\infty} + \wnorm{\maxwellProj \vec{u}}{1}{3} &\lsim \hnorm{\vec{u}}{2}. \label{eqn:maxwell-proj-estimates-stability}
\end{align}
\end{lemma}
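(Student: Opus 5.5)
The approximation estimate \eqref{eqn:maxwell-proj-estimates-approximation} is a Céa-type argument followed by an Aubin--Nitsche duality argument. The stability estimate \eqref{eqn:maxwell-proj-estimates-stability} then copies the proof of \eqref{eqn:stokes-proj-estimates-stability}.

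\textbf{Step 1: coercivity and the energy estimate.} Consider the bilinear form
\[
a(\vec{u},\vec{w}) := \liprod{\curl \vec{u}}{\curl \vec{w}} + \liprods{\div \vec{u}}{\div \vec{w}}.
\]
By Lemma \ref{lem:curl-estimate} it is coercive on $\HB_n^1(\Omega)$; here $\Omega$ is convex, so there are no harmonic Neumann fields and the lemma applies. It is also bounded in the $\HB^1$ norm. Definition \eqref{eqn:maxwell-projection} is exactly Galerkin orthogonality $a(\vec{u}-\maxwellProj\vec{u},\vec{\omega}_h)=0$ for all $\vec{\omega}_h \in \V_{\vec{B}}$, so Lax--Milgram gives existence and uniqueness of $\maxwellProj\vec{u}$. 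Céa's lemma then gives
\[
\hnorm{\vec{u}-\maxwellProj\vec{u}}{1}\lsim \inf_{\vec{\omega}_h\in\V_{\vec{B}}}\hnorm{\vec{u}-\vec{\omega}_h}{1}.
\]

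\textbf{Step 2: the approximation estimate in $\HB^1$.} I will bound the infimum by $h^{s-1}\hnorm{\vec{u}}{s}$ using an interpolant that preserves $\vec{\omega}_h\cdot\vec{n}=0$ on $\partial\Omega$. On a polyhedron this can be done face by face: constrain the nodal normal components, and at edges and vertices, where two or more faces meet, the nodal value is forced tangent to all of them. I will cite a Scott--Zhang type interpolant with these boundary modifications. This is the main technical obstacle, and following the remark before the lemma I would rely on \cite{ravindran2016-article} here.

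\textbf{Step 3: the $\LB^2$ estimate by duality.} Let $\vec{\phi}$ solve the dual problem
\[
\curl\curl\vec{\phi}-\nabla\div\vec{\phi}=\vec{u}-\maxwellProj\vec{u},
\]
with $\vec{\phi}\cdot\vec{n}=0$ and $\curl\vec{\phi}\times\vec{n}=\vec{0}$ on $\partial\Omega$. On a convex domain the Neumann-type vector Laplacian has $\HB^2$ regularity, $\hnorm{\vec{\phi}}{2}\lsim\lnorm{\vec{u}-\maxwellProj\vec{u}}{2}$, as in \cite{amrouche1998-article,girault1986-book}. Integrating by parts shows
\[
\lnorm{\vec{u}-\maxwellProj\vec{u}}{2}^2=a(\vec{u}-\maxwellProj\vec{u},\vec{\phi});
\]
the boundary terms vanish because both functions lie in $\HB_n^1$ and $\curl\vec{\phi}\times\vec{n}=\vec{0}$. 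By Galerkin orthogonality we may replace $\vec{\phi}$ by $\vec{\phi}-\vec{\phi}_h$ for the interpolant $\vec{\phi}_h$ of Step 2. This gives
\[
\lnorm{\vec{u}-\maxwellProj\vec{u}}{2}^2\lsim h\,\hnorm{\vec{\phi}}{2}\,\hnorm{\vec{u}-\maxwellProj\vec{u}}{1},
\]
so $\lnorm{\vec{u}-\maxwellProj\vec{u}}{2}\lsim h^{s}\hnorm{\vec{u}}{s}$. The case $s=1$ follows from stability alone.

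\textbf{Step 4: the stability estimate.} I follow the proof of \eqref{eqn:stokes-proj-estimates-stability} verbatim, with $I_h\vec{u}$ the normal-preserving interpolant of Step 2. The triangle inequality and the inverse estimate \eqref{eqn:inverse-estimates} give
\[
\lnorm{\maxwellProj\vec{u}-I_h\vec{u}}{\infty}+\wnorm{\maxwellProj\vec{u}-I_h\vec{u}}{1}{3}\lsim h^{-1/2}\hnorm{\maxwellProj\vec{u}-I_h\vec{u}}{1}\lsim h^{1/2}\hnorm{\vec{u}}{2}.
\]
The interpolation errors of $I_h\vec{u}$ in $\LB^\infty$ and $\W^{1,3}$ are bounded by $\hnorm{\vec{u}}{2}$. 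The embeddings $\HB^2\hookrightarrow\LB^\infty\cap\W^{1,3}$ then bound $\vec{u}$ itself, which finishes the proof.
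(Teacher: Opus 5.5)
Your Steps 1, 2 and 4 are sound. Step 4 is exactly the paper's argument: the paper cites \cite{ravindran2016-article} for the approximation bound and otherwise repeats the Stokes-projection argument. Step 4 only needs $\hnorm{\vec u-\maxwellProj\vec u}{1}\lsim h\hnorm{\vec u}{2}$, which comes from C\'ea's lemma alone, so it is unaffected by the problem below. Step 2 is also easier than you expect. For $s\ge 2$, $\vec u$ is continuous and $\vec n$ is constant on each face, so the plain Lagrange interpolant already lies in $\V_{\vec B}$. In Step 4 the interpolant need not lie in $\V_{\vec B}$ at all, since the inverse estimate applies to any finite element function.

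The gap is in Step 3. You claim that the dual problem with $\vec\phi\cdot\vec n=0$ and $\curl\vec\phi\times\vec n=\vec 0$ is $\HB^2$-regular on every convex domain. This is false for convex polyhedra. On convex domains the references you cite give only the $\HB^1$-embedding, which is how the paper uses them in Lemma~\ref{lem:curl-estimate}; $\HB^2$-regularity needs more, for example a $C^{1,1}$ boundary. To see the failure, take $\chi$ with $\partial_{\vec n}\chi=0$ and $\Delta\chi\in H^1(\Omega)$. Then $\vec\phi=\nabla\chi\in\HB^1_n(\Omega)$, and for all $\vec w\in\HB^1_n(\Omega)$ we have $\langle\curl\vec w,\curl\vec\phi\rangle+\langle\div\vec w,\div\vec\phi\rangle=\langle\vec w,-\nabla\Delta\chi\rangle$. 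So $\vec\phi$ is the dual solution for the $\LB^2$ datum $-\nabla\Delta\chi$, yet $\vec\phi\in\HB^2$ if and only if $\chi\in H^3$. Now suppose $\Omega$ has an edge with dihedral angle $\omega\in(\pi/2,\pi)$, as in a regular octahedron or a prism over an obtuse triangle. Take $\chi=\eta\,r^{\pi/\omega}\cos(\pi\theta/\omega)$, where $\eta$ is a cutoff localising to the interior of that edge. This $\chi$ satisfies the Neumann condition and has $\Delta\chi\in H^1$, but $\chi\notin H^3$ because $\pi/\omega<2$.

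Consequently, on a general convex polyhedron your duality step yields only $\lnorm{\vec u-\maxwellProj\vec u}{2}\lsim h^{s-1+\sigma}\hnorm{\vec u}{s}$, with some $\sigma\in(0,1)$ determined by the edge and vertex singularities. The same applies to your case $s=1$: stability alone gives no factor $h$ in the $\LB^2$ norm, and that factor again comes from duality. The analogy with the Stokes projection breaks exactly here, since the Stokes problem is $\HB^2\times H^1$-regular on convex polyhedra while this magnetic-type vector Laplacian is not. To close the argument you need an extra hypothesis guaranteeing $\HB^2$-regularity of the dual problem, for example a $C^{1,1}$ boundary, or a box where a reflection argument gives it. Alternatively, you need a reference that proves the $\LB^2$ estimate under the paper's assumptions; the paper only points to \cite{ravindran2016-article} at this step, so you would have to check that it covers general convex polyhedra.
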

\begin{proof}
	The results can be obtained in the same manner as in the proof of  Lemma
	\ref{lem:stokes-proj-estimates}.
\end{proof}

\subsection{Technical Estimates for the Proof of Theorem \ref{thm:main-result}}

We first state a lemma that estimates the error that arises from the time
discretisation of the exact form of the FMHD PDEs.

\begin{lemma} \label{lem:truncation-estimates}
For every $\varepsilon > 0$ there exists $c(\varepsilon) > 0$ such that the truncation terms $R_1(\vec{\varphi}_h)$, $R_2(\vec{\omega}_h)$ and $R_3(\vec{\xi}_h)$, defined respectively by \eqref{eqn:exact-velocity-discrete-formulation}, \eqref{eqn:exact-magnetic-field-discrete-formulation} and \eqref{eqn:exact-magnetisation-discrete-formulation}, satisfy 
\begin{align*}
    |R_1(\vec{\varphi}_h)| &\leq c(\varepsilon) \tau^2 + \varepsilon \lnorm{\vec{\varphi}_h}{2}^2 \qquad \forall \vec{\varphi}_h \in \V_{\vec{v}},\\
    |R_2(\vec{\omega}_h)| &\leq c(\varepsilon) \tau^2 + \varepsilon \lnorm{\vec{\omega}_h}{2}^2 \qquad \forall \vec{\omega}_h \in \V_{\vec{B}},\\
    |R_3(\vec{\xi}_h)| &\leq c(\varepsilon) \tau^2 + \varepsilon \lnorm{\vec{\xi}_h}{2}^2 \qquad \forall \vec{\xi}_h \in \V_{\vec{m}},
\end{align*}
where the constant $c(\varepsilon)$ may depend only on the domain and the initial data.
\end{lemma}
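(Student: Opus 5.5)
Each of $R_1$, $R_2$, $R_3$ is a finite sum of bracketed differences, and we treat them one bracket at a time. Every bracket has one of two forms. The first is a time-discretisation consistency term $\langle \discreteDtau \vec{u}^n - \partial_t\vec{u}^n, \cdot\rangle$ for $\vec{u}\in\{\vec{v},\vec{B},\vec{m}\}$. The second is a lagging term, where one factor is evaluated at $t_{n-1}$ instead of $t_n$. For each bracket we aim to show $|\cdot| \le C\tau \lnorm{\vec{\varphi}_h}{2}$, with $C$ depending only on the norms of the exact solution in \eqref{eqn:exact-solution-regularity} (and $\Omega$). Young's inequality $C\tau\lnorm{\vec{\varphi}_h}{2} \le \frac{C^2}{4\varepsilon}\tau^2 + \varepsilon\lnorm{\vec{\varphi}_h}{2}^2$ then gives the claim. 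We rescale $\varepsilon$ by the fixed number of brackets in each $R_i$.

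\textbf{Consistency terms.} Taylor's formula with integral remainder gives
\[
\discreteDtau \vec{u}^n - \partial_t\vec{u}^n = -\frac{1}{\tau}\int_{t_{n-1}}^{t_n}(s-t_{n-1})\,\partial_t^2\vec{u}(s)\,ds .
\]
Hence $\lnorm{\discreteDtau \vec{u}^n - \partial_t\vec{u}^n}{2} \le \tau \sup_{t}\lnorm{\partial_t^2\vec{u}}{2}$. This is finite because $\partial_t^2\vec{u}\in C([0,T];\LB^2(\Omega))$. Cauchy--Schwarz finishes these terms.

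\textbf{Lagging terms.} In each such term the difference $\vec{w}^{n-1}-\vec{w}^n = -\int_{t_{n-1}}^{t_n}\partial_t\vec{w}\,ds$ appears, with $\vec{w}\in\{\vec{v},\vec{B},\vec{m}\}$. It satisfies $\hnorm{\vec{w}^{n}-\vec{w}^{n-1}}{1}\le\tau\sup_t\hnorm{\partial_t\vec{w}}{1}$, and in particular the same bound holds in $\LB^2$, $\LB^6$ and for the gradient in $\LB^2$. We then use H\"older's inequality, placing $\vec{\varphi}_h$ in $\LB^2$ and the remaining exact factors in $\LB^\infty$ or $\W^{1,\infty}$-type spaces.

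For example, in $R_1$ the term $\langle((\vec{v}^{n-1}-\vec{v}^n)\cdot\nabla)\vec{v}^n,\vec{\varphi}_h\rangle$ is bounded by $\lnorm{\vec{v}^{n-1}-\vec{v}^n}{6}\lnorm{\nabla\vec{v}^n}{3}\lnorm{\vec{\varphi}_h}{2}$. Here $\HB^{\ell+1}\hookrightarrow \W^{1,3}$ since $\ell\ge2$. The term $\langle\nabla(\vec{m}^{n-1}-\vec{m}^n)\odot\Delta\vec{m}^n,\vec{\varphi}_h\rangle$ uses $\Delta\vec{m}\in\LB^\infty$ from $\vec{m}\in C([0,T];\W^{2,\infty})$. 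The $\curl\vec{B}^n\times(\vec{B}^{n-1}-\vec{B}^n)$ and $(\vec{B}^{n-1}-\vec{B}^n)\cdot\nabla\vec{m}^n$ terms are analogous, using $\curl\vec{B}\in\HB^{k}\hookrightarrow\LB^\infty$.

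In $R_3$, products such as $\vec{m}^{n-1}\times(\vec{m}^{n-1}\times\vec{B}^n)-\vec{m}^n\times(\vec{m}^n\times\vec{B}^n)$ and $(\nabla\vec{m}^n\cdot\nabla\vec{m}^{n-1})\vec{m}^{n-1}-(\nabla\vec{m}^n\cdot\nabla\vec{m}^n)\vec{m}^n$ are telescoped into sums of single differences. Each single difference is multiplied by bounded factors: $|\vec{m}|=1$, $\nabla\vec{m}\in\LB^\infty$, and $\vec{B}\in\HB^{k+1}\hookrightarrow\LB^\infty$.

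\textbf{The $R_2$ obstacle.} The one term not of the above form is the lagging term in $R_2$, $\langle\vec{v}^n\times(\vec{B}^{n-1}-\vec{B}^n),\curl\vec{\omega}_h\rangle$, which is tested against $\curl\vec{\omega}_h$ rather than $\vec{\omega}_h$. Plain Cauchy--Schwarz would leave $\lnorm{\curl\vec{\omega}_h}{2}$, which is not of the required form. We therefore integrate by parts using the formula from \cite[Theorem 3.29]{monk2003-book}:
\[
\langle \vec{u},\curl\vec{\omega}_h\rangle = \langle\curl\vec{u},\vec{\omega}_h\rangle - \langle\vec{u}\times\vec{n},\vec{\omega}_h\rangle_{\LB^2(\partial\Omega)}, \qquad \vec{u}=\vec{v}^n\times(\vec{B}^{n-1}-\vec{B}^n).
\]
The boundary term vanishes because $\vec{v}^n=\vec{0}$ on $\partial\Omega$. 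We then need $\lnorm{\curl(\vec{v}^n\times(\vec{B}^{n}-\vec{B}^{n-1}))}{2} \lesssim \|\vec{v}^n\|_{\W^{1,\infty}}\hnorm{\vec{B}^n-\vec{B}^{n-1}}{1}\le C\tau$. This uses $\vec{v}\in\HB^{3}\hookrightarrow\W^{1,\infty}$ (as $\ell\ge2$) and $\partial_t\vec{B}\in C([0,T];\HB^1)$. This is the step requiring the most care, since it depends on the boundary condition and the regularity level. All remaining terms are routine, and collecting them gives the three inequalities.
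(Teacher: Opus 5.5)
Your strategy matches the paper's: split each $R_j$ into a consistency term and lagging differences, gain a factor $\tau$ from $\partial_t^2$ or $\partial_t$ of the exact solution, and finish with H\"older and Young. Your integration by parts for $\liprod{\vec{v}^n\times(\vec{B}^{n-1}-\vec{B}^n)}{\curl\vec{\omega}_h}$ is correct. The boundary term vanishes because $\vec{v}^n=\vec{0}$ on $\partial\Omega$, and $\vec{v}\in C([0,T];\HB^3(\Omega))\hookrightarrow C([0,T];\W^{1,\infty}(\Omega))$. This supplies a detail the paper only covers with ``similarly''.

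There is, however, a gap in $R_1$. Your claim that the $R_2$ term is the only one not tested against the test function in $\LB^2$ is false. The skew-symmetrisation bracket in $R_1$,
\[
-\tfrac12\bigl(\liprod{(\vec{v}^{n-1}\cdot\nabla)\vec{\varphi}_h}{\vec{v}^n}-\liprod{(\vec{v}^{n}\cdot\nabla)\vec{\varphi}_h}{\vec{v}^n}\bigr) = -\tfrac12\liprod{((\vec{v}^{n-1}-\vec{v}^n)\cdot\nabla)\vec{\varphi}_h}{\vec{v}^n},
\]
puts the gradient on $\vec{\varphi}_h$, so your step ``H\"older with $\vec{\varphi}_h$ in $\LB^2$'' fails here. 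It only yields $\lnorm{\nabla\vec{\varphi}_h}{2}$. An inverse estimate would turn this into $c(\varepsilon)\tau^2h^{-2}+\varepsilon\lnorm{\vec{\varphi}_h}{2}^2$, which is not the stated bound.

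This is exactly the term for which the paper invokes Lemma~\ref{lem:convective}. Set $\vec{u}:=\vec{v}^{n-1}-\vec{v}^n$. It is divergence-free and vanishes on $\partial\Omega$, as does $\vec{\varphi}_h\in\V_{\vec{v}}$, so the boundary term drops and
\[
\liprod{(\vec{u}\cdot\nabla)\vec{\varphi}_h}{\vec{v}^n} = -\liprod{(\vec{u}\cdot\nabla)\vec{v}^n}{\vec{\varphi}_h}.
\]
The right-hand side is bounded by $\lnorm{\vec{u}}{2}\lnorm{\nabla\vec{v}^n}{\infty}\lnorm{\vec{\varphi}_h}{2}\le C\tau\lnorm{\vec{\varphi}_h}{2}$, and Young's inequality finishes it as for your other terms. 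With this extra integration by parts, which mirrors the one you did for $R_2$, your argument is complete.
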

\begin{proof}
The truncation term $R_1(\vec{\varphi}_h)$ satisfies
\begin{align*}
    R_1(\vec{\varphi}_h) &= \liprod{\discreteDtau \vec{v}^n - \vec{v}_t^n}{\vec{\varphi}_h} - \frac{\tau}{2} \liprod{(\discreteDtau \vec{v}^n \cdot \nabla)\vec{v}^n}{\vec{\varphi}_h} + \frac{\tau}{2} \liprod{(\discreteDtau \vec{v}^n \cdot \nabla)\vec{\varphi}_h}{\vec{v}^n} \\
    &\qquad + \tau \liprod{\curl \vec{B}^n \times \discreteDtau \vec{B}^n}{\vec{\varphi}_h} + \tau \liprod{(\discreteDtau \vec{B}^n \cdot \nabla) \vec{m}^n}{\vec{\varphi}_h} \\
    &\qquad - \tau \liprod{\nabla \discreteDtau \vec{m}^n \odot \Delta \vec{m}^n}{\vec{\varphi}_h}.
\end{align*}
Hence, using Lemma \ref{lem:convective} for the third term,
\begin{align*}
    |R_1(\vec{\varphi}_h)| &\lsim \lnorm{\discreteDtau \vec{v}^n - \vec{v}_t^n}{2} \lnorm{\vec{\varphi}_h}{2} + \tau \lnorm{\nabla \vec{v}^n}{\infty} \lnorm{\discreteDtau \vec{v}^n}{2} \lnorm{\vec{\varphi}_h}{2} \\
    &\qquad + \tau \lnorm{\vec{v}^n}{\infty} \lnorm{\nabla \discreteDtau \vec{v}^n}{2} \lnorm{\vec{\varphi}_h}{2} + \tau \lnorm{\curl \vec{B}^n}{\infty} \lnorm{\discreteDtau \vec{B}^n}{2} \lnorm{\vec{\varphi}_h}{2} \\
    &\qquad + \tau \lnorm{\nabla \vec{m}^n}{\infty} \lnorm{\discreteDtau \vec{B}^n}{2} \lnorm{\vec{\varphi}_h}{2} + \tau \lnorm{\Delta \vec{m}^n}{\infty} \lnorm{\nabla \discreteDtau \vec{m}^n}{2} \lnorm{\vec{\varphi}_h}{2} \\
    &\leq c(\varepsilon) \tau^2 + \varepsilon \lnorm{\vec{\varphi}_h}{2}^2.
\end{align*}
The remaining estimates can be proved similarly.
\end{proof}

The next six lemmas are used to prove estimate
\eqref{eqn:induction-error-estimate}. We remind that the constants $A$ and $h_0$
in the statements and proofs of these lemmas are stated
in~\eqref{eqn:induction-error-estimate}.

\begin{lemma} \label{lem:induction-assumptions-error-estimates} 
Assume that \eqref{eqn:tau-growth} holds. Assume further that the inductive assumption \eqref{eqn:inductive-assumption} holds. Then for $0 \leq i \leq n-1$ we have
\begin{align} \label{eqn:induction-estimate-linf-w13}
    \lnorm{\femvec{\delta}{\vec{m}}{i}}{\infty} + \wnorm{\femvec{\delta}{\vec{m}}{i}}{1}{3} &\leq C h^{1/2}
    \quad \forall h\in(0,h_0].
\end{align}
Here, the constant $C$ may depend only on the exact solution and the domain $\Omega$.
\end{lemma}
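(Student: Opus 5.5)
We split $\femvec{\delta}{\vec{m}}{i} = \femvec{E}{\vec{m}}{i} + \femvec{e}{\vec{m}}{i}$ as in \eqref{equ:error split} and estimate the two parts separately in $\LB^\infty(\Omega)$ and $\W^{1,3}(\Omega)$.

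\textbf{Projection part.} For $\femvec{E}{\vec{m}}{i} = \vec{m}^i - \ellipticProj \vec{m}^i$ we use the Ritz projection estimates of Lemma~\ref{lem:elliptic-proj-estimates}.
\begin{itemize}
\item For the $\W^{1,3}$ norm, apply \eqref{eqn:ritz-proj-estimates-approximation} with $p=3$ and $s=2$; this gives $\wnorm{\femvec{E}{\vec{m}}{i}}{1}{3} \lsim h\,\wnorm{\vec{m}^i}{2}{3}$.
\item For the $\LB^\infty$ norm, write $\femvec{E}{\vec{m}}{i} = (\vec{m}^i - I_h\vec{m}^i) + (I_h\vec{m}^i - \ellipticProj\vec{m}^i)$. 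Bound the first piece by standard interpolation in $\LB^\infty$. Bound the discrete second piece by the inverse estimate \eqref{eqn:inverse-estimates} together with the $\HB^1$ approximation property, which gives $h^{-1/2}\cdot h = h^{1/2}$.
\item Alternatively, use $\HB^1$-approximation of order $h^{r}$ with $r\ge2$ and the inverse estimate, which gives an even better power.
\end{itemize}
Since $\vec{m}\in C([0,T];\W^{2,\infty}(\Omega)\cap\HB^{r+1}(\Omega))$ by \eqref{eqn:exact-solution-regularity}, all these bounds are uniform in $i$ and are $O(h^{1/2})$ for $h\le h_0\le 1$.

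\textbf{Discrete part.} Since $\femvec{e}{\vec{m}}{i}\in\V_{\vec{m}}$, the inverse estimate \eqref{eqn:inverse-estimates} gives
\begin{equation*}
\lnorm{\femvec{e}{\vec{m}}{i}}{\infty} + \wnorm{\femvec{e}{\vec{m}}{i}}{1}{3} \lsim h^{-1/2}\hnorm{\femvec{e}{\vec{m}}{i}}{1} \le h^{-1/2}(P^i)^{1/2}.
\end{equation*}
For $i=0$ this vanishes by \eqref{eqn:thm-proof_2}. For $1\le i\le n-1$, the inductive assumption \eqref{eqn:inductive-assumption} gives
\begin{equation*}
(P^i)^{1/2} \le A^{1/2}\big(\tau + h^{\ell-\beta}+h^{k-\beta}+h^{r-\beta}\big).
\end{equation*}
By \eqref{eqn:tau-growth}, $\tau\lsim h^{1+\beta}$. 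Since $\ell,k,r\ge2$ and $h\le1$, every term is at most $C h^{2-\beta}$, apart from $\tau\lsim h^{1+\beta}$. Multiplying by $h^{-1/2}$ gives the bound $A^{1/2}C(h^{1/2+\beta}+h^{3/2-\beta})$.

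\textbf{Main obstacle: removing the dependence on $A$.} The stated constant $C$ must depend only on the exact solution and $\Omega$, not on $A$, since $A$ is chosen later in terms of such constants. This is exactly where the condition $h_0<A^{-1/\beta}$ enters. For $h\le h_0$ we have $h^{\beta}<A^{-1}$, so $A^{1/2}h^{\beta/2}<1$. Hence
\begin{equation*}
A^{1/2}h^{1/2+\beta} = A^{1/2}h^{\beta/2}\,h^{1/2+\beta/2}\le h^{1/2},
\end{equation*}
and similarly, using $\beta<1/2$,
\begin{equation*}
A^{1/2}h^{3/2-\beta}\le A^{1/2}h^{\beta/2}\,h^{1/2}\le h^{1/2},
\end{equation*}
because $3/2-\beta-\beta/2\ge 1/2$.

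Combining the projection and discrete parts gives \eqref{eqn:induction-estimate-linf-w13} with $C$ independent of $A$, $i$ and $n$.
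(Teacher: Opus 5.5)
Your proof is correct and follows the same route as the paper. You split $\femvec{\delta}{\vec{m}}{i}=\femvec{E}{\vec{m}}{i}+\femvec{e}{\vec{m}}{i}$, bound the discrete part by the inverse estimate \eqref{eqn:inverse-estimates}, the inductive assumption and $\tau=O(h^{1+\beta})$, and then use $h\le h_0<A^{-1/\beta}$ to remove the factor $A^{1/2}$. The only difference is cosmetic: the paper treats the projection part in one line via $\W^{1,6}(\Omega)\hookrightarrow \LB^\infty(\Omega)\cap\W^{1,3}(\Omega)$ and \eqref{eqn:ritz-proj-estimates-approximation} with $p=6$, $s=2$, whereas you use $p=3$ for the $\W^{1,3}$ norm and an interpolant-plus-inverse-estimate argument for the $\LB^\infty$ norm, and both give at least $O(h^{1/2})$.
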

\begin{proof}
We begin with the case where $i \geq 1$. Choosing $h_0 < A^{-1/\beta}$ and recalling definition \eqref{equ:error split}, we use successively the triangle
inequality, the Sobolev embeddings 
$\W^{1,6}(\Omega) \hookrightarrow \LB^{\infty}(\Omega)$
and
$\W^{1,6}(\Omega) \hookrightarrow \W^{1,3}(\Omega)$,
\eqref{eqn:inverse-estimates}, \eqref{eqn:ritz-proj-estimates-approximation},
\eqref{eqn:inductive-assumption}, and \eqref{eqn:tau-growth} to deduce that
\begin{align*}
    	\lnorm{\femvec{\delta}{\vec{m}}{i}}{\infty} 
	+
	\wnorm{\femvec{\delta}{\vec{m}}{i}}{1}{3} 
	&\leq 
	\lnorm{\femvec{E}{\vec{m}}{i}}{\infty} 
	+
	\wnorm{\femvec{E}{\vec{m}}{i}}{1}{3} 
	+
	\lnorm{\femvec{e}{\vec{m}}{i}}{\infty} 
	+
	\wnorm{\femvec{e}{\vec{m}}{i}}{1}{3} 
	\\
    	&\leq 
	C \wnorm{\femvec{E}{\vec{m}}{i}}{1}{6} 
	+
	C  h^{-1/2}\hnorm{\femvec{e}{\vec{m}}{i}}{1} 
	\\
    	&\leq 
	C h \wnorm{\vec{m}^i}{2}{6} 
	+
	C h^{-1/2} A^{1/2}(\tau + h^{2-\beta}) 
	\\
    	&\leq 
	C h \|\vec{m}\|_{C([0,T];\W^{2,6}(\Omega))}
	+ 
	C A^{1/2} h^{1/2 + \beta} 
	\\
    	&\leq C h^{1/2}
	\quad \forall h > 0, \ h \le h_0 < A^{-1/\beta},
\end{align*}
since $\beta \in (0,1/2)$. Of course, in the case where $i=0$ the result
follows trivially since $\femvec{e}{\vec{m}}{i} = \vec{0}$.
\end{proof}

\begin{lemma} \label{lem:induction-assumptions-solution-estimates}
Assume that \eqref{eqn:tau-growth} holds. Assume further that the inductive assumption \eqref{eqn:inductive-assumption} holds. Then for $0 \leq i \leq n-1$ we have
\begin{equation*}
    \left(\lnorm{\femvec{v}{h}{i}}{\infty} + \lnorm{\femvec{B}{h}{i}}{\infty} + \lnorm{\femvec{m}{h}{i}}{\infty}\right) + \left(\wnorm{\femvec{v}{h}{i}}{1}{3} + \wnorm{\femvec{B}{h}{i}}{1}{3} + \wnorm{\femvec{m}{h}{i}}{1}{3}\right) \leq C.
\end{equation*}
Here, the constant $C$ may depend only on the exact solution and the domain $\Omega$.
\end{lemma}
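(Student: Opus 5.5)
The plan is to split each discrete quantity as exact solution minus projection error minus discrete error, following \eqref{equ:error split}, and to bound the three pieces separately. For $i\ge1$ we write
\begin{equation*}
\femvec{v}{h}{i} = \vec{v}^i - \femvec{E}{\vec{v}}{i} - \femvec{e}{\vec{v}}{i}, \qquad \femvec{B}{h}{i} = \vec{B}^i - \femvec{E}{\vec{B}}{i} - \femvec{e}{\vec{B}}{i}, \qquad \femvec{m}{h}{i} = \vec{m}^i - \femvec{\delta}{\vec{m}}{i}.
\end{equation*}
The exact solution terms are uniformly bounded in $\LB^\infty\cap\W^{1,3}$ by \eqref{eqn:exact-solution-regularity}. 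This uses the embeddings $\HB^2(\Omega)\hookrightarrow\LB^\infty(\Omega)$ and $\HB^2(\Omega)\hookrightarrow\W^{1,6}(\Omega)\hookrightarrow\W^{1,3}(\Omega)$, together with $\ell,k\ge2$ and $\vec{m}\in C([0,T];\W^{2,\infty})$.

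The magnetisation is the easy case. Lemma \ref{lem:induction-assumptions-error-estimates} already gives $\lnorm{\femvec{\delta}{\vec{m}}{i}}{\infty}+\wnorm{\femvec{\delta}{\vec{m}}{i}}{1}{3}\le Ch^{1/2}\le Ch_0^{1/2}$, and the bound follows by the triangle inequality.

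For the velocity and magnetic field, we avoid splitting off $\vec{E}$ separately. Instead we write $\femvec{v}{h}{i}=\hat{\vec{v}}_h^i-\femvec{e}{\vec{v}}{i}$ and $\femvec{B}{h}{i}=\maxwellProj\vec{B}^i-\femvec{e}{\vec{B}}{i}$. The projection parts are handled by the stability estimates \eqref{eqn:stokes-proj-estimates-stability} and \eqref{eqn:maxwell-proj-estimates-stability}. These give bounds by $\hnorm{\vec{v}^i}{2}+\|\tilde p^i\|_{H^1}$ and $\hnorm{\vec{B}^i}{2}$, which are uniformly bounded by \eqref{eqn:exact-solution-regularity}. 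For the pressure we note that $\tilde p$ is controlled through \eqref{eqn:p-tilde} by $p$, $\vec{m}$ and $\vec{B}$, all sufficiently regular.

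For the discrete errors $\femvec{e}{\vec{v}}{i}$ and $\femvec{e}{\vec{B}}{i}$ we use the inverse estimate \eqref{eqn:inverse-estimates}:
\begin{equation*}
\lnorm{\femvec{e}{\vec{v}}{i}}{\infty}+\wnorm{\femvec{e}{\vec{v}}{i}}{1}{3}\lsim h^{-1/2}\hnorm{\femvec{e}{\vec{v}}{i}}{1},
\end{equation*}
and the same bound holds for $\femvec{e}{\vec{B}}{i}$. The main obstacle is here. The inductive assumption \eqref{eqn:inductive-assumption} controls only the $\LB^2$ norm of $\femvec{e}{\vec{v}}{i}$ pointwise in $i$, while the $\HB^1$ norm appears only inside $\tau\sum_j Q^j$. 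Two routes are available.

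The first route uses the sum directly: $\tau Q^i\le A(\tau^2+h^{2(\ell-\beta)}+\dots)$ gives $\hnorm{\femvec{e}{\vec{v}}{i}}{1}^2\le A\tau^{-1}(\tau^2+h^{4-2\beta})$. With $\tau\sim h^{1+\beta}$ this is $\lsim A(h^{1+\beta}+h^{3-3\beta})$. Multiplying by $h^{-1}$ (the square of $h^{-1/2}$) leaves $A(h^{\beta}+h^{2-3\beta})$, which is bounded since $\beta<1/2$. Here we also need a matching lower bound $\tau\gtrsim h^{1+\beta}$, i.e. we read \eqref{eqn:tau-growth} as $\tau\simeq h^{1+\beta}$.

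The second route, safer if only the upper bound $\tau\lesssim h^{1+\beta}$ is available, uses the $\LB^2$-to-$\LB^\infty$ and $\LB^2$-to-$\W^{1,3}$ inverse estimates $h^{-3/2}$ and $h^{-3/2}$ applied to the $\LB^2$ bound $A^{1/2}(\tau+h^{2-\beta})$. This gives $A^{1/2}(h^{-1/2+\beta}+h^{1/2-\beta})$. This bound is problematic in its first term, so the first route is preferred. In either case, the condition $h_0<A^{-1/\beta}$ makes $A h^{\beta}\le 1$, so the resulting constant does not depend on $A$. This is exactly the mechanism used in Lemma \ref{lem:induction-assumptions-error-estimates}.

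For $i=0$ the discrete errors vanish by \eqref{eqn:thm-proof_2}, and the projection stability bounds alone suffice. Collecting the three bounds gives the stated estimate with $C$ depending only on the exact solution and $\Omega$.
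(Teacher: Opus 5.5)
Your proposal is correct and, in its first route, is exactly the paper's argument. Like the paper, you use the split $\vec{v}_h^i=\hat{\vec{v}}_h^i-\vec{e}_{\vec{v}}^i$ (and its Maxwell analogue), projection stability, the $h^{-1/2}$ inverse estimate, and $H^1$-control of $\vec{e}_{\vec{v}}^i$ extracted from $\tau Q^i$; with $\tau\simeq h^{1+\beta}$ (a two-sided reading the paper also relies on implicitly) and $h_0<A^{-1/\beta}$ this gives the paper's bound $CA^{1/2}h^{\beta/2}+C$. Treating $\vec{m}_h^i$ via Lemma~\ref{lem:induction-assumptions-error-estimates} is only a repackaging of the paper's ``similarly'', and the abandoned second route should simply be deleted, since your ``in either case'' remark does not hold for it.
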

\begin{proof}
Choosing $h_0 < A^{-1/\beta}$, if $i \geq 1$, it follows successively from the triangle inequality, \eqref{eqn:inverse-estimates}, \eqref{eqn:stokes-proj-estimates-stability}, and \eqref{eqn:inductive-assumption} that
\begin{align*}
    	\lnorm{\femvec{v}{h}{i}}{\infty} 
	+
	\wnorm{\femvec{v}{h}{i}}{1}{3} 
	&\leq 
	\big(
	\lnorm{\femvec{e}{\vec{v}}{i}}{\infty} 
	+
	\wnorm{\femvec{e}{\vec{v}}{i}}{1}{3} 
	\big)
	+
	\big(
	\lnorm{\femvec{\hat{v}}{h}{i}}{\infty} 
	+
	\wnorm{\femvec{\hat{v}}{h}{i}}{1}{3}
	\big)
	\\
    	&\leq 
	C h^{-1/2} 
	\hnorm{\femvec{e}{\vec{v}}{i}}{1} 
	+
	C \big(\hnorm{\vec{v}^i}{2} 
	+
	\hnorms{\tilde{p}^i}{1}\big) \\
    	&\leq 
	C h^{-1/2} A^{1/2}\big(\tau^{1/2} + h^{3/2(1-\beta)}\big) 
	+
	C \big(\hnorm{\vec{v}^i}{2} 
	+
	\hnorms{\tilde{p}^i}{1}\big) \\
    	&\leq C A^{1/2} h^{\beta/2} + C,
\end{align*}
where the constant $C$ depends on $\|\vec{v}\|_{C([0,T]; \HB^2(\Omega))}$ and
$\|\tilde{p}\|_{C([0,T]; H^1(\Omega))}$, which are well-defined due to the
regularity assumption~\eqref{eqn:exact-solution-regularity}, noting
definition~\eqref{eqn:p-tilde}. Of course, if $i = 0$ we have $\femvec{e}{\vec{m}}{i} = \vec{0}$ and the result follows trivially. The other terms can be proved similarly. 
\end{proof}

Using the above estimates, valid at time steps $1, \ldots, n-1$, we estimate the velocity error $\vec{e}_{\vec{v}}^i$.

\begin{lemma} \label{lem:inductive-step-velocity}
Let $\varepsilon > 0$, $n \geq 2$, $1 \leq i \leq n$, and assume that \eqref{eqn:tau-growth} and \eqref{eqn:inductive-assumption} hold. The velocity error term $\femvec{e}{\vec{v}}{i}$ satisfies the estimate
\begin{align*}
    	\lnorm{\femvec{e}{\vec{v}}{i}}{2}^2 
	-
	\lnorm{\femvec{e}{\vec{v}}{i-1}}{2}^2 
	+
	2 \tau \hnorm{\femvec{e}{\vec{v}}{i}}{1}^2 
	&\leq 
	c(\varepsilon) \tau \left(\tau^2 
	+ h^{2\ell} +h^{2k} + h^{2r} + \tau^{-2} h^{2(\ell+1)}
	 \right)
	\\
    	&\qquad 
	+
	c(\varepsilon) \tau 
	\lnorm{\femvec{e}{\vec{v}}{i}}{2}^2 
	+
	c(\varepsilon) \tau 
	\left( \lnorm{\femvec{e}{\vec{v}}{i-1}}{2}^2 + \lnorm{\femvec{e}{\vec{B}}{i-1}}{2}^2\right) 
	\\
    	&\qquad 
	+
	C (\varepsilon + h^{1/2}) \tau 
	\left(\hnorm{\femvec{e}{\vec{v}}{i}}{1}^2 + \hnorm{\femvec{e}{\vec{B}}{i}}{1}^2\right) 
	\\
    	&\qquad
	+
	C \tau \hnorm{\femvec{e}{\vec{m}}{i}}{1}^2 
	+
	C \tau \hnorm{\femvec{e}{\vec{m}}{i-1}}{1}^2 
	\\
    	&\qquad 
	+
	C (\varepsilon + h^{1/2}) \tau 
	\lnorm{\Delta_h \femvec{e}{\vec{m}}{i}}{2}^2,
\end{align*}
where $c(\varepsilon)$ and $C$ may depend only on the exact solution and the domain $\Omega$.
\end{lemma}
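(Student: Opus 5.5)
We test the velocity error equation \eqref{eqn:error-velocity} with $\vec{\varphi}_h = 2\tau\femvec{e}{\vec{v}}{i}$, which is admissible since $\femvec{e}{\vec{v}}{i}\in\V_{\vec{v}}$. Split $\discreteDtau\femvec{\delta}{\vec{v}}{i} = \discreteDtau\femvec{E}{\vec{v}}{i} + \discreteDtau\femvec{e}{\vec{v}}{i}$ and use the identity $2\langle a-b,a\rangle = \|a\|^2-\|b\|^2+\|a-b\|^2$. This produces $\lnorm{\femvec{e}{\vec{v}}{i}}{2}^2-\lnorm{\femvec{e}{\vec{v}}{i-1}}{2}^2$ on the left, and we drop the nonnegative difference term. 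The viscous term gives $2\tau\lnorm{\nabla\femvec{e}{\vec{v}}{i}}{2}^2$, and by Poincaré this is equivalent to $\hnorm{\femvec{e}{\vec{v}}{i}}{1}^2$ up to constants; the lower-order part is absorbed into $c(\varepsilon)\tau\lnorm{\femvec{e}{\vec{v}}{i}}{2}^2$. The pressure term $\liprod{e_p^i}{\div\femvec{e}{\vec{v}}{i}}$ vanishes by \eqref{eqn:error-velocity-divergence}, since $e_p^i\in V_p$. The projection time-difference term $\tau\liprod{\discreteDtau\femvec{E}{\vec{v}}{i}}{\femvec{e}{\vec{v}}{i}}$ is bounded using $\lnorm{\femvec{E}{\vec{v}}{i}-\femvec{E}{\vec{v}}{i-1}}{2}\lesssim h^{\ell+1}$ from Lemma \ref{lem:stokes-proj-estimates}, which yields the $\tau\cdot\tau^{-2}h^{2(\ell+1)}$ contribution. (One could instead use $\int\partial_t$ to get $h^{2(\ell+1)}$; the statement permits the cruder bound.) The truncation term $R_1$ is handled by Lemma \ref{lem:truncation-estimates}.

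The nonlinear terms are grouped as in \eqref{eqn:error-velocity}, and in each group we split every $\vec{\delta}=\vec{E}+\vec{e}$.

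\textbf{Convective terms.} The skew-symmetric pair with $\femvec{v}{h}{i-1}$ and $\femvec{e}{\vec{v}}{i}$ cancels exactly. The remaining pieces involve either $\femvec{E}{\vec{v}}{}$ or $\femvec{\delta}{\vec{v}}{i-1}$ against the smooth $\vec{v}^i$. We bound them by Hölder's inequality using $\lnorm{\femvec{v}{h}{i-1}}{\infty}\le C$ from Lemma \ref{lem:induction-assumptions-solution-estimates}, together with $\lnorm{\nabla\vec v^i}{\infty}$ and $\lnorm{\vec v^i}{\infty}$. This gives $c(\varepsilon)\tau(h^{2\ell}+\lnorm{\femvec{e}{\vec{v}}{i-1}}{2}^2)+\varepsilon\tau\hnorm{\femvec{e}{\vec{v}}{i}}{1}^2$.

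\textbf{Lorentz term.} The critical piece is $\liprod{\curl\femvec{e}{\vec{B}}{i}\times\femvec{B}{h}{i-1}}{\femvec{e}{\vec{v}}{i}}$. We do not cancel it here; that cancellation belongs to the $\vec{B}$ lemma. Instead we bound it by $\lnorm{\femvec{B}{h}{i-1}}{\infty}\hnorm{\femvec{e}{\vec{B}}{i}}{1}\lnorm{\femvec{e}{\vec{v}}{i}}{2}\le\varepsilon\tau\hnorm{\femvec{e}{\vec{B}}{i}}{1}^2+c(\varepsilon)\tau\lnorm{\femvec{e}{\vec{v}}{i}}{2}^2$. The $\femvec{\delta}{\vec{B}}{i-1}$ piece is bounded against $\lnorm{\curl\vec B^i}{\infty}$.

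\textbf{Magnetisation terms.} In $\liprod{(\femvec{B}{h}{i-1}\cdot\nabla)\femvec{\delta}{\vec{m}}{i}}{\femvec{e}{\vec{v}}{i}}$ we use $\lnorm{\femvec{B}{h}{i-1}}{\infty}\le C$. This gives $C\tau\hnorm{\femvec{e}{\vec{m}}{i}}{1}^2+C\tau\lnorm{\femvec{e}{\vec{v}}{i}}{2}^2+C\tau h^{2r}$, which accounts for the $C\tau\hnorm{\femvec{e}{\vec{m}}{i}}{1}^2$ term carrying no small factor.

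The main obstacle is the capillary group containing $\Delta_h\femvec{\delta}{\vec{m}}{i}$. We write $\Delta_h\femvec{\delta}{\vec{m}}{i}=\Delta_h\femvec{E}{\vec{m}}{i}+\Delta_h\femvec{e}{\vec{m}}{i}$, where $\Delta_h\femvec{E}{\vec{m}}{i}=(I-\Lproj)\Delta\vec m^i$ by Lemma \ref{lem:elliptic-proj-estimates}, so $\lnorm{\Delta_h\femvec{E}{\vec{m}}{i}}{2}\lesssim h^{r-1}$.

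We first write $\nabla\vec m^{i-1}-\nabla\femvec{\delta}{\vec{m}}{i-1}=\nabla\femvec{m}{h}{i-1}$. The product $\nabla\femvec{m}{h}{i-1}\odot\Delta_h\femvec{e}{\vec{m}}{i}$ is then estimated with $\femvec{e}{\vec{v}}{i}\in\LB^6$, $\nabla\femvec{m}{h}{i-1}\in\LB^3$ and $\Delta_h\femvec{e}{\vec{m}}{i}\in\LB^2$, giving $C\lnorm{\nabla\femvec{m}{h}{i-1}}{3}\lnorm{\Delta_h\femvec{e}{\vec{m}}{i}}{2}\hnorm{\femvec{e}{\vec{v}}{i}}{1}$. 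A plain Young split of this would not carry the small factor the statement requires. So we further split $\nabla\femvec{m}{h}{i-1}=\nabla\vec m^{i-1}-\nabla\femvec{\delta}{\vec{m}}{i-1}$:

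\begin{itemize}
\item[(a)] The smooth part, with $\nabla\vec m^{i-1}\in\LB^\infty$, allows us to integrate the discrete Laplacian by parts onto $\femvec{e}{\vec{v}}{i}$ after inserting $\Lproj$. We expect roughly $C\tau\hnorm{\femvec{e}{\vec{m}}{i}}{1}\hnorm{\femvec{e}{\vec{v}}{i}}{1}\le\varepsilon\tau\hnorm{\femvec{e}{\vec{v}}{i}}{1}^2+c(\varepsilon)\tau\hnorm{\femvec{e}{\vec{m}}{i}}{1}^2$. Failing that, Young's inequality with $\varepsilon$ on $\lnorm{\Delta_h\femvec{e}{\vec{m}}{i}}{2}^2$ gives $\varepsilon\tau\lnorm{\Delta_h\femvec{e}{\vec{m}}{i}}{2}^2$ plus $c(\varepsilon)\tau\lnorm{\femvec{e}{\vec{v}}{i}}{2}^2$.
\item[(b)] The error part uses $\wnorm{\femvec{\delta}{\vec{m}}{i-1}}{1}{3}\le Ch^{1/2}$ from Lemma \ref{lem:induction-assumptions-error-estimates}. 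This gives $Ch^{1/2}\tau(\lnorm{\Delta_h\femvec{e}{\vec{m}}{i}}{2}^2+\hnorm{\femvec{e}{\vec{v}}{i}}{1}^2)$, which is exactly the $(\varepsilon+h^{1/2})$ structure in the statement.
\end{itemize}

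The $\Delta_h\femvec{E}{\vec{m}}{i}$ pieces contribute $h^{2(r-1)}$. This exceeds the stated $h^{2r}$ unless an $\HB^{-1}$-duality bound is used, namely $\|(I-\Lproj)\Delta\vec m\|_{\HB^{-1}}\lesssim h^r$ tested against $\nabla\vec m\cdot\femvec{e}{\vec{v}}{i}\in\HB^1$. We would use this duality bound for the smooth factor. For the $\femvec{\delta}{\vec{m}}{i-1}$ factor we would use the $h^{1/2}$ bound, and we expect this to be the delicate bookkeeping step.

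Finally, the term $\nabla\femvec{\delta}{\vec{m}}{i-1}\odot\Delta\vec m^i$ is bounded by $\lnorm{\Delta\vec m^i}{\infty}$, which produces the $C\tau\hnorm{\femvec{e}{\vec{m}}{i-1}}{1}^2$ term. Collecting all contributions gives the stated inequality.
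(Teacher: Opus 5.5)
Your outline matches the paper's proof term by term. Both test with $\femvec{e}{\vec{v}}{i}$, estimate each group by H\"older and Young, take the $\LB^\infty$ bounds from Lemma~\ref{lem:induction-assumptions-solution-estimates} and the $h^{1/2}$ factor from Lemma~\ref{lem:induction-assumptions-error-estimates}. The polarization identity and the exact cancellation of the skew convective pair are harmless variations.

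You are also right that $\lnorm{\Delta_h\femvec{E}{\vec{m}}{i}}{2}=\lnorm{(I-\Lproj)\Delta\vec{m}^i}{2}$ is only $O(h^{r-1})$ under \eqref{eqn:exact-solution-regularity}. The paper's $h^{r+1}$ in $I_{12}$ and $I_{13}$ would require $\vec{m}\in\HB^{r+3}$. Your duality repair for the smooth factor is valid. Set $\vec{w}=(\nabla\vec{m}^{i-1})\femvec{e}{\vec{v}}{i}$; then $\langle(I-\Lproj)\Delta\vec{m}^i,\vec{w}\rangle=\langle(I-\Lproj)\Delta\vec{m}^i,(I-\Lproj)\vec{w}\rangle\lesssim h^{r-1}\cdot h\,\hnorm{\vec{w}}{1}$, and $\hnorm{\vec{w}}{1}\lesssim\hnorm{\femvec{e}{\vec{v}}{i}}{1}$ because $\vec{m}\in\W^{2,\infty}(\Omega)$.

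The gap is the companion term $\tau\langle\nabla\femvec{\delta}{\vec{m}}{i-1}\odot\Delta_h\femvec{E}{\vec{m}}{i},\femvec{e}{\vec{v}}{i}\rangle$, which you leave as ``delicate bookkeeping''. The plan you state does not close. With $\wnorm{\femvec{\delta}{\vec{m}}{i-1}}{1}{3}\le Ch^{1/2}$ you only get
\[
C\tau h^{1/2}h^{r-1}\hnorm{\femvec{e}{\vec{v}}{i}}{1}\le c(\varepsilon)\tau h^{2r-1}+\varepsilon\tau\hnorm{\femvec{e}{\vec{v}}{i}}{1}^2 .
\]
This is a factor $h^{-1}$ worse than the $h^{2r}$ in the statement. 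It is not harmless either: since $\beta<1/2$, it is also worse than the final rate $h^{2(r-\beta)}$. Duality is unavailable here, because $\nabla\femvec{m}{h}{i-1}$ jumps across faces, so $(\nabla\femvec{\delta}{\vec{m}}{i-1})\femvec{e}{\vec{v}}{i}\notin\HB^1(\Omega)$.

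The missing step is to split $\femvec{\delta}{\vec{m}}{i-1}=\femvec{E}{\vec{m}}{i-1}+\femvec{e}{\vec{m}}{i-1}$ in this term rather than use the lumped $h^{1/2}$ bound.
\begin{itemize}
\item By \eqref{eqn:ritz-proj-estimates-approximation} with $s=2$, $p=3$, you have $\wnorm{\femvec{E}{\vec{m}}{i-1}}{1}{3}\lesssim h$. This gives $C\tau h^{r}\hnorm{\femvec{e}{\vec{v}}{i}}{1}\le c(\varepsilon)\tau h^{2r}+\varepsilon\tau\hnorm{\femvec{e}{\vec{v}}{i}}{1}^2$.
\item By \eqref{eqn:inverse-estimates}, $\wnorm{\femvec{e}{\vec{m}}{i-1}}{1}{3}\lesssim h^{-1/2}\hnorm{\femvec{e}{\vec{m}}{i-1}}{1}$. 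This gives
\[
C\tau h^{r-3/2}\hnorm{\femvec{e}{\vec{m}}{i-1}}{1}\hnorm{\femvec{e}{\vec{v}}{i}}{1}\le C\tau\hnorm{\femvec{e}{\vec{m}}{i-1}}{1}^2+Ch^{2r-3}\tau\hnorm{\femvec{e}{\vec{v}}{i}}{1}^2 ,
\]
and $h^{2r-3}\le h^{1/2}$ since $r\ge 2$.
\end{itemize}
Both contributions have the form allowed in the statement, and the lemma then follows from your outline.
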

\begin{proof}
Set $\vec{\varphi}_h = \femvec{e}{\vec{v}}{i}$ in \eqref{eqn:error-velocity}, $q_h = e_p^i$ in \eqref{eqn:error-velocity-divergence}. Multiplying the resulting equation by $\tau$, expanding $\tau D_{\tau} \vec{\delta}_{\vec{v}}^i = \vec{E}_{\vec{v}}^i + \vec{e}_{\vec{v}}^i - \vec{\delta}_{\vec{v}}^{i-1}$, see \eqref{equ:error split}, and adding $\tau \lnorm{\femvec{e}{\vec{v}}{i}}{2}^2$ to both sides, we obtain noting that $\liprod{\div \vec{e}_{\vec{v}}^i}{e_p^i} = 0$ by \eqref{eqn:error-velocity-divergence},
\begin{align*}
    \lnorm{\femvec{e}{\vec{v}}{i}}{2}^2 + \tau  \hnorm{\femvec{e}{\vec{v}}{i}}{1}^2 &= - \liprod{\femvec{E}{\vec{v}}{i}}{\femvec{e}{\vec{v}}{i}} + \liprod{\femvec{\delta}{\vec{v}}{i-1}}{\femvec{e}{\vec{v}}{i}} - \frac{\tau}{2} \liprod{(\vec{\delta}^{i-1}_{\vec{v}} \cdot \nabla) \vec{v}^{i}}{\femvec{e}{\vec{v}}{i}} \\
    &\qquad  - \frac{\tau}{2} \liprod{(\vec{v}^{i-1}_{h} \cdot \nabla) \vec{\delta}_{\vec{v}}^i}{\femvec{e}{\vec{v}}{i}}+ \frac{\tau}{2} \liprod{(\vec{\delta}^{i-1}_{\vec{v}} \cdot \nabla) \femvec{e}{\vec{v}}{i}}{\vec{v}^{i}} \\
    &\qquad  + \frac{\tau}{2} \liprod{(\vec{v}^{i-1}_{h} \cdot \nabla)\femvec{e}{\vec{v}}{i}}{\vec{\delta}^{i}_{\vec{v}}} + \tau \liprod{\curl \vec{\delta}^{i}_{\vec{B}} \times \vec{B}^{i-1}_h}{\femvec{e}{\vec{v}}{i}} \\
    &\qquad  + \tau \liprod{\curl \vec{B}^{i} \times \vec{\delta}^{i-1}_{\vec{B}}}{\femvec{e}{\vec{v}}{i}} - \tau \liprod{(\vec{\delta}^{i-1}_{\vec{B}} \cdot \nabla) \vec{m}^{i}}{\femvec{e}{\vec{v}}{i}} \\
    &\qquad  - \tau \liprod{(\vec{B}^{i-1}_{h} \cdot \nabla)\vec{\delta}^{i}_{\vec{m}}}{\femvec{e}{\vec{v}}{i}} - \tau \liprod{\nabla \femvec{\delta}{\vec{m}}{i-1} \odot \Delta \vec{m}^i}{\femvec{e}{\vec{v}}{i}} \\
    &\qquad  -  \tau \liprod{\nabla \vec{m}^{i-1} \odot \Delta_h \femvec{\delta}{\vec{m}}{i}}{\femvec{e}{\vec{v}}{i}} + \tau \liprod{\nabla \femvec{\delta}{\vec{m}}{i-1} \odot \Delta_h \femvec{\delta}{\vec{m}}{i}}{\femvec{e}{\vec{v}}{i}} \\
    &\qquad + \tau R_1(\femvec{e}{\vec{v}}{i}) +  \tau \lnorm{\femvec{e}{\vec{v}}{i}}{2}^2 \\
    &= I_1 + \cdots + I_{14} +  \tau \lnorm{\femvec{e}{\vec{v}}{i}}{2}^2.
\end{align*}
In estimating these integrals, we make regular use of Hölder's inequality, the Sobolev embedding theorem, Young's inequality, the regularity of our exact solutions $\vec{v}, \vec{B}$ and $\vec{m}$ in \eqref{eqn:exact-solution-regularity} and Lemma \ref{lem:induction-assumptions-solution-estimates}. We begin by estimating $I_1$ and $I_2$ by using \eqref{eqn:stokes-proj-estimates-approximation} to obtain
\begin{align*}
    |I_1| &:= \left|\liprod{\femvec{E}{\vec{v}}{i}}{\femvec{e}{\vec{v}}{i}}\right| \\
    &\leq \lnorm{\femvec{E}{\vec{v}}{i}}{2} \lnorm{\femvec{e}{\vec{v}}{i}}{2} \\
    &\leq C h^{\ell+1} \lnorm{\femvec{e}{\vec{v}}{i}}{2} \\
    &\leq C \tau^{-1} h^{2(\ell+1)} + C \tau \lnorm{\femvec{e}{\vec{v}}{i}}{2}^2,
\end{align*}
and
\begin{align*}
    |I_2| &:= \left|\liprod{\femvec{\delta}{\vec{v}}{i-1}}{\femvec{e}{\vec{v}}{i}}\right| \\
    &\leq \lnorm{\femvec{E}{\vec{v}}{i-1}}{2} \lnorm{\femvec{e}{\vec{v}}{i}}{2} + \lnorm{\femvec{e}{\vec{v}}{i-1}}{2} \lnorm{\femvec{e}{\vec{v}}{i}}{2} \\
    &\leq C h^{\ell+1} \lnorm{\femvec{e}{\vec{v}}{i}}{2} + \frac{1}{2} \lnorm{\femvec{e}{\vec{v}}{i-1}}{2}^2 + \frac{1}{2} \lnorm{\femvec{e}{\vec{v}}{i}}{2}^2 \\
    &\leq C \tau^{-1} h^{2(\ell+1)} + \frac{1}{2} \lnorm{\femvec{e}{\vec{v}}{i-1}}{2}^2 + \frac{1}{2} \lnorm{\femvec{e}{\vec{v}}{i}}{2}^2 + C \tau \lnorm{\femvec{e}{\vec{v}}{i}}{2}^2
\end{align*}
We estimate $I_3$ by using \eqref{eqn:stokes-proj-estimates-approximation} to obtain
\begin{align*}
    |I_3| &:= \left|\frac{\tau}{2} \liprod{(\vec{\delta}^{i-1}_{\vec{v}} \cdot \nabla) \vec{v}^{i}}{\femvec{e}{\vec{v}}{i}}\right|\\
    &\leq \tau \lnorm{\nabla \vec{v}^i}{\infty} \lnorm{\femvec{\delta}{\vec{v}}{i-1}}{2} \lnorm{\femvec{e}{\vec{v}}{i}}{2} \\\
    &\leq C \tau \lnorm{\femvec{\delta}{\vec{v}}{i-1}}{2} \lnorm{\femvec{e}{\vec{v}}{i}}{2} \\
    &\leq C \tau \left(h^{\ell + 1} + \lnorm{\femvec{e}{\vec{v}}{i-1}}{2}\right) \lnorm{\femvec{e}{\vec{v}}{i}}{2} \\
    &\leq C \tau h^{2(\ell+1)} + C \tau \lnorm{\femvec{e}{\vec{v}}{i}}{2}^2 + C \tau \lnorm{\femvec{e}{\vec{v}}{i-1}}{2}^2.
\end{align*}
Next, with the help of Lemma \ref{lem:induction-assumptions-solution-estimates}, we see that
\begin{align*}
    |I_4| &:= \left|\frac{\tau}{2} \liprod{(\vec{v}^{i-1}_{h} \cdot \nabla) \vec{\delta}_{\vec{v}}^i}{\femvec{e}{\vec{v}}{i}}\right|\\
    &\leq C \tau \lnorm{\femvec{v}{h}{i-1}}{\infty} \lnorm{\nabla \femvec{\delta}{\vec{v}}{i}}{2} \lnorm{\femvec{e}{\vec{v}}{i}}{2} \\ 
    &\leq C \tau \lnorm{\nabla \femvec{\delta}{\vec{v}}{i}}{2} \lnorm{\femvec{e}{\vec{v}}{i}}{2} \\
    &\leq C \tau \left(h^{\ell} + \lnorm{\nabla \femvec{e}{\vec{v}}{i}}{2}\right) \lnorm{\femvec{e}{\vec{v}}{i}}{2} \\
    &\leq C \tau h^{2 \ell} + c(\varepsilon) \tau \lnorm{\femvec{e}{\vec{v}}{i}}{2}^2 + \varepsilon \tau \hnorm{\femvec{e}{\vec{v}}{i}}{1}^2.
\end{align*}
The estimate for $I_5$ is carried out in the same manner as $I_3$ so that
\begin{align*}
    |I_5| &\leq c(\varepsilon) \tau h^{2(\ell+1)} + c(\varepsilon) \tau \lnorm{\femvec{e}{\vec{v}}{i-1}}{2}^2 + \varepsilon \tau \hnorm{\femvec{e}{\vec{v}}{i}}{1}^2,
\end{align*}
while $I_6$ requires Lemma \ref{lem:induction-assumptions-solution-estimates}, in addition, to obtain
\begin{align*}
    |I_6| &\leq c(\varepsilon) \tau h^{2(\ell+1)} + c(\varepsilon) \tau \lnorm{\femvec{e}{\vec{v}}{i}}{2}^2 + \varepsilon \tau \hnorm{\femvec{e}{\vec{v}}{i}}{1}^2.
\end{align*}
We bound $I_7$ as we did $I_4$, but here we use \eqref{eqn:maxwell-proj-estimates-approximation} instead, thus giving
\begin{align*}
    |I_7| &\leq C \tau h^{2k} + c(\varepsilon) \tau \lnorm{\femvec{e}{\vec{v}}{i}}{2}^2 + \varepsilon \tau \hnorm{\femvec{e}{\vec{B}}{i}}{1}^2.
\end{align*}
The integrals $I_8$ and $I_9$ are bounded in a similar manner to $I_3$ so that
\begin{align*}
    |I_8| &\leq C \tau h^{2(k+1)} + C \tau \lnorm{\femvec{e}{\vec{v}}{i}}{2}^2 + C \tau \lnorm{\femvec{e}{\vec{B}}{i-1}}{2}^2, \\
    |I_9| &\leq C \tau h^{2(k+1)} + C \tau \lnorm{\femvec{e}{\vec{v}}{i}}{2}^2 + C \tau \lnorm{\femvec{e}{\vec{B}}{i-1}}{2}^2.
\end{align*}
Using \eqref{eqn:ritz-proj-estimates-approximation}, we estimate $I_{10}$, as we did $I_4$, to obtain
\begin{align*}
    |I_{10}| &\leq C \tau h^{2r} + C \tau \lnorm{\femvec{e}{\vec{v}}{i}}{2}^2 + C \tau \hnorm{\femvec{e}{\vec{m}}{i}}{1}^2,
\end{align*}
and similarly
\begin{align*}
    |I_{11}| &\leq C \tau h^{2r} + C \tau \lnorm{\femvec{e}{\vec{v}}{i}}{2}^2 + C \tau \hnorm{\femvec{e}{\vec{m}}{i-1}}{1}^2.
\end{align*}
For $I_{12}$, using \eqref{eqn:discrete-laplacian-error-definition}, we have
\begin{align*}
    |I_{12}| &:= \left|\tau \liprod{\nabla \vec{m}^{i-1} \odot \Delta_h \femvec{\delta}{\vec{m}}{i}}{\femvec{e}{\vec{v}}{i}}\right|\\
    &\leq \tau \lnorm{\nabla \vec{m}^{i-1}}{\infty} \lnorm{\Delta_h \femvec{\delta}{\vec{m}}{i}}{2} \lnorm{\femvec{e}{\vec{v}}{i}}{2} \\
    &\leq C \tau \left(\lnorm{\Delta \vec{m}^i - \Lproj \Delta \vec{m}^i}{2} + \lnorm{\Delta_h \femvec{e}{\vec{m}}{i}}{2} \right)\lnorm{\femvec{e}{\vec{v}}{i}}{2}  \\
    &\leq C \tau h^{r+1} \lnorm{\femvec{e}{\vec{v}}{i}}{2} + C \tau \lnorm{\Delta_h \femvec{e}{\vec{m}}{i}}{2} \lnorm{\femvec{e}{\vec{v}}{i}}{2}  \\
    &\leq C \tau h^{2(r+1)} + c(\varepsilon) \tau \lnorm{\femvec{e}{\vec{v}}{i}}{2}^2 + \varepsilon \tau \lnorm{\Delta_h \femvec{e}{\vec{m}}{i}}{2}^2.
\end{align*}
Estimating $I_{13}$ is similar but requires extra care. We use the Sobolev embedding $\HB^1(\Omega) \hookrightarrow \LB^6(\Omega)$ and \eqref{eqn:induction-estimate-linf-w13} to obtain
\begin{align*}
    |I_{13}| &:= \left|\tau \liprod{\nabla \femvec{\delta}{\vec{m}}{i-1} \odot \Delta_h \femvec{\delta}{\vec{m}}{i}}{\femvec{e}{\vec{v}}{i}}\right|\\
    &\leq \tau \lnorm{\femvec{e}{\vec{v}}{i}}{6} \lnorm{\nabla \femvec{\delta}{\vec{m}}{i-1}}{3} \lnorm{\Delta_h \femvec{\delta}{\vec{m}}{i}}{2} \\
    &\leq C h^{1/2} \tau \lnorm{\Delta_h \femvec{\delta}{\vec{m}}{i}}{2} \hnorm{\femvec{e}{\vec{v}}{i}}{1} \\
    &\leq C h^{1/2} \tau h^{2(r+1)} \hnorm{\femvec{e}{\vec{v}}{i}}{1} + C h^{1/2} \tau \lnorm{\Delta_h \femvec{e}{\vec{m}}{i}}{2} \hnorm{\femvec{e}{\vec{v}}{i}}{1} \\
    &\leq C \tau h^{2(r+1)} + C h^{1/2} \tau \hnorm{\femvec{e}{\vec{v}}{i}}{1}^2 + C h^{1/2} \tau \lnorm{\Delta_h \femvec{e}{\vec{m}}{i}}{2}^2.
\end{align*}
Lastly, by Lemma \ref{lem:truncation-estimates}
\begin{align*}
    |I_{14}| &\leq C \tau^3 + C \tau \lnorm{\femvec{e}{\vec{v}}{i}}{2}^2.
\end{align*}
We complete the proof by adding up these estimates, subtracting
\begin{equation*}
    \frac{1}{2} \lnorm{\femvec{e}{\vec{v}}{i-1}}{2}^2 + \frac{1}{2} \lnorm{\femvec{e}{\vec{v}}{i}}{2}^2
\end{equation*}
from both sides, and grouping the terms appropriately.
\end{proof}

Next up, we estimate the magnetic field $\femvec{e}{\vec{B}}{i}$.

\begin{lemma} \label{lem:inductive-step-magnetic-field}
Let $\varepsilon > 0$, $n \geq 2$, $1 \leq i \leq n$, and assume that \eqref{eqn:tau-growth} and \eqref{eqn:inductive-assumption} hold. The magnetic field error term $\femvec{e}{\vec{B}}{i}$ satisfies the estimate
\begin{align*}
     \lnorm{\femvec{e}{\vec{B}}{i}}{2}^2 - \lnorm{\femvec{e}{\vec{B}}{i-1}}{2}^2
     + C_\Omega \tau \hnorm{\femvec{e}{\vec{B}}{i}}{1}^2 
     &\leq c(\varepsilon) \tau \left(\tau^2 + h^{2(\ell+1)} + h^{2(k+1)} + \tau^{-2} h^{2(k+1)}\right)   \\
     &\qquad + c(\varepsilon) \tau \left(\lnorm{\femvec{e}{\vec{v}}{i}}{2}^2 + \lnorm{\femvec{e}{\vec{B}}{i}}{2}^2\right) \\
     &\qquad + c(\varepsilon) \tau \lnorm{\femvec{e}{\vec{B}}{i-1}}{2}^2 + C
     \varepsilon \tau \hnorm{\femvec{e}{\vec{B}}{i}}{1}^2,
\end{align*}
where $C_\Omega$ is given in Lemma~\ref{lem:curl-estimate}, and $c(\varepsilon)$
and $C$ may depend only on the exact solution and the domain $\Omega$.
\end{lemma}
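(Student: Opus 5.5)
We follow the same route as Lemma~\ref{lem:inductive-step-velocity}. We set $\vec{\omega}_h = \femvec{e}{\vec{B}}{i}$ in the magnetic field error equation \eqref{eqn:error-magnetic-field} and multiply by $\tau$. Next we expand $\tau D_\tau \femvec{\delta}{\vec{B}}{i} = \femvec{E}{\vec{B}}{i} + \femvec{e}{\vec{B}}{i} - \femvec{\delta}{\vec{B}}{i-1}$ using \eqref{equ:error split}. This leaves
\begin{equation*}
\lnorm{\femvec{e}{\vec{B}}{i}}{2}^2 + \tau\big(\lnorm{\curl \femvec{e}{\vec{B}}{i}}{2}^2 + \lnorms{\div \femvec{e}{\vec{B}}{i}}{2}^2\big)
\end{equation*}
on the left-hand side. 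On the right-hand side we collect the following terms:
\begin{itemize}
\item $J_1 = -\langle \femvec{E}{\vec{B}}{i}, \femvec{e}{\vec{B}}{i}\rangle$ and $J_2 = \langle \femvec{\delta}{\vec{B}}{i-1}, \femvec{e}{\vec{B}}{i}\rangle$;
\item the two coupling terms $J_3 = \tau\langle \femvec{\delta}{\vec{v}}{i}\times \femvec{B}{h}{i-1}, \curl \femvec{e}{\vec{B}}{i}\rangle$ and $J_4 = \tau\langle \vec{v}^i\times \femvec{\delta}{\vec{B}}{i-1}, \curl\femvec{e}{\vec{B}}{i}\rangle$;
\item the truncation term $J_5=\tau R_2(\femvec{e}{\vec{B}}{i})$.
\end{itemize}
Since $\femvec{e}{\vec{B}}{i}\in \V_{\vec{B}}\subset \HB_n^1(\Omega)$, Lemma~\ref{lem:curl-estimate} bounds the curl--div part from below by $C_\Omega \tau\hnorm{\femvec{e}{\vec{B}}{i}}{1}^2$, after adjusting the constant convention.

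The terms $J_1$, $J_2$ are handled exactly like $I_1$, $I_2$, using \eqref{eqn:maxwell-proj-estimates-approximation} with $s=k+1$. Write $\lnorm{\femvec{E}{\vec{B}}{i}}{2}\le Ch^{k+1}$ and split $h^{k+1}\lnorm{\femvec{e}{\vec{B}}{i}}{2}\le C\tau^{-1}h^{2(k+1)} + C\tau\lnorm{\femvec{e}{\vec{B}}{i}}{2}^2$; this produces the $\tau\cdot\tau^{-2}h^{2(k+1)}$ contribution. For $J_2$ we also use $\lnorm{\femvec{e}{\vec{B}}{i-1}}{2}\lnorm{\femvec{e}{\vec{B}}{i}}{2}\le \frac12\lnorm{\femvec{e}{\vec{B}}{i-1}}{2}^2+\frac12\lnorm{\femvec{e}{\vec{B}}{i}}{2}^2$. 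At the end, these halves are subtracted to give the difference $\lnorm{\femvec{e}{\vec{B}}{i}}{2}^2-\lnorm{\femvec{e}{\vec{B}}{i-1}}{2}^2$ on the left. The truncation term $J_5$ follows from Lemma~\ref{lem:truncation-estimates}, giving $C\tau^3 + C\tau\lnorm{\femvec{e}{\vec{B}}{i}}{2}^2$.

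The coupling terms are the main obstacle. The velocity error must enter only through $\lnorm{\femvec{e}{\vec{v}}{i}}{2}$, with no $\HB^1$ norm of $\femvec{e}{\vec{v}}{i}$, because the statement has none. For $J_3$ we use $\lnorm{\femvec{B}{h}{i-1}}{\infty}\le C$ from Lemma~\ref{lem:induction-assumptions-solution-estimates}. Together with Hölder this gives
\begin{equation*}
|J_3|\le C\tau(\lnorm{\femvec{E}{\vec{v}}{i}}{2}+\lnorm{\femvec{e}{\vec{v}}{i}}{2})\lnorm{\curl\femvec{e}{\vec{B}}{i}}{2},
\end{equation*}
and then $\lnorm{\femvec{E}{\vec{v}}{i}}{2}\le Ch^{\ell+1}$ from \eqref{eqn:stokes-proj-estimates-approximation}. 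Young's inequality yields $c(\varepsilon)\tau(h^{2(\ell+1)}+\lnorm{\femvec{e}{\vec{v}}{i}}{2}^2)+\varepsilon\tau\hnorm{\femvec{e}{\vec{B}}{i}}{1}^2$. For $J_4$ we use $\lnorm{\vec{v}^i}{\infty}\le C$ from \eqref{eqn:exact-solution-regularity} and $\lnorm{\femvec{E}{\vec{B}}{i-1}}{2}\le Ch^{k+1}$. This gives $c(\varepsilon)\tau(h^{2(k+1)}+\lnorm{\femvec{e}{\vec{B}}{i-1}}{2}^2)+\varepsilon\tau\hnorm{\femvec{e}{\vec{B}}{i}}{1}^2$.

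The point to verify is that the $L^\infty$ bound on $\femvec{B}{h}{i-1}$ is available for all $1\le i\le n$. It is, since it is only needed at index $i-1\le n-1$, which is covered by the inductive assumption \eqref{eqn:inductive-assumption} through Lemma~\ref{lem:induction-assumptions-solution-estimates}. Summing all the bounds, subtracting the halves from $J_2$, and grouping the terms gives the claimed inequality.
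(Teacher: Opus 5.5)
Your proposal is correct and matches the paper's proof step for step: same test function $\vec{\omega}_h = \femvec{e}{\vec{B}}{i}$, same five terms (your $J_1$--$J_5$ are the paper's $I_1$--$I_5$), bounded with the same ingredients (Maxwell and Stokes projection estimates, the $\LB^\infty$ bounds on $\femvec{B}{h}{i-1}$ and $\vec{v}^i$, Lemma~\ref{lem:truncation-estimates}), and closed the same way via Lemma~\ref{lem:curl-estimate} and subtracting the halves from $J_2$. Your remark on the constant convention is apt: as stated, Lemma~\ref{lem:curl-estimate} gives the lower bound with $C_\Omega^{-1}$ rather than $C_\Omega$, a harmless discrepancy the paper glosses over.
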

\begin{proof}
Setting $\vec{\omega}_h = \femvec{e}{\vec{B}}{i}$ in \eqref{eqn:error-magnetic-field}, multiplying the result by $\tau$, expanding $\tau \discreteDtau \femvec{\delta}{\vec{B}}{i}$ as in Lemma \ref{lem:inductive-step-velocity}, we obtain
\begin{align*}
    \lnorm{\femvec{e}{\vec{B}}{i}}{2}^2 + \tau \lnorm{\curl \femvec{e}{\vec{B}}{i}}{2}^2 + \tau \lnorms{\div \femvec{e}{\vec{B}}{i}}{2}^2  &= - \liprod{\femvec{E}{\vec{B}}{i}}{\femvec{e}{\vec{B}}{i}} + \liprod{\femvec{\delta}{\vec{B}}{i-1}}{\femvec{e}{\vec{B}}{i}} \\
    &\qquad + \tau \liprod{\femvec{\delta}{\vec{v}}{i} \times \femvec{B}{h}{i-1}}{\curl \femvec{e}{\vec{B}}{i}} \\
    &\qquad  + \tau \liprod{\vec{v}^i \times \femvec{\delta}{\vec{B}}{i-1}}{\curl \femvec{e}{\vec{B}}{i}} + \tau R_2(\femvec{e}{\vec{B}}{i}) \\
    &= I_1 + \cdots + I_5.
\end{align*}
In estimating these integrals, we make regular use of Hölder's inequality, the Sobolev embedding theorem, Young's inequality, the regularity of our exact solutions $\vec{v}, \vec{B}$ and $\vec{m}$ in \eqref{eqn:exact-solution-regularity} and Lemma \ref{lem:induction-assumptions-solution-estimates}. We bound $I_1$ and $I_2$ in the same manner as $I_1$ and $I_2$ in Lemma \ref{lem:inductive-step-velocity}, using \eqref{eqn:maxwell-proj-estimates-approximation}, to obtain
\begin{align*}
    |I_1| &\leq C \tau^{-1} h^{2(k+1)} + C \tau \lnorm{\femvec{e}{\vec{B}}{i}}{2}^2, \\
    |I_2| &\leq C \tau^{-1} h^{2(k+1)} + \frac{1}{2} \lnorm{\femvec{e}{\vec{B}}{i-1}}{2}^2 + \frac{1}{2} \lnorm{\femvec{e}{\vec{B}}{i}}{2}^2 + C \tau \lnorm{\femvec{e}{\vec{B}}{i}}{2}^2.
\end{align*}
Using Lemma \ref{lem:induction-assumptions-solution-estimates} and \eqref{eqn:stokes-proj-estimates-approximation}, we have for $I_3$
\begin{align*}
    |I_3| &:= \left|\tau \liprod{\femvec{\delta}{\vec{v}}{i} \times \femvec{B}{h}{i-1}}{\curl \femvec{e}{\vec{B}}{i}}\right|\\
    &\leq \tau \lnorm{\femvec{B}{h}{i-1}}{\infty} \lnorm{\femvec{\delta}{\vec{v}}{i}}{2} \lnorm{\curl \femvec{e}{\vec{B}}{i}}{2} \\
    &\leq C \tau \lnorm{\femvec{\delta}{\vec{v}}{i}}{2} \hnorm{\femvec{e}{\vec{B}}{i}}{1} \\
    &\leq C \tau \left(h^{\ell+1} + \lnorm{\femvec{e}{\vec{v}}{i}}{2}\right) \hnorm{\femvec{e}{\vec{B}}{i}}{1} \\
    &\leq c(\varepsilon) \tau h^{2(\ell+1)} + c(\varepsilon) \tau \lnorm{\femvec{e}{\vec{v}}{i}}{2}^2 + \varepsilon \tau \hnorm{\femvec{e}{\vec{B}}{i}}{1}^2.
\end{align*}
For $I_4$, we use \eqref{eqn:maxwell-proj-estimates-approximation} to obtain
\begin{align*}
    |I_4| &\leq c(\varepsilon) \tau h^{2(k+1)} + c(\varepsilon) \tau \lnorm{\femvec{e}{\vec{B}}{i-1}}{2}^2 + \varepsilon \tau \hnorm{\femvec{e}{\vec{B}}{i}}{1}^2.
\end{align*}
Lastly, by Lemma \ref{lem:truncation-estimates} we obtain
\begin{align*}
    |I_5| &\leq C \tau^3 + C \tau \lnorm{\femvec{e}{\vec{B}}{i}}{2}^2.
\end{align*}
The result is proved by summing up the estimates, using Lemma \ref{lem:curl-estimate}, subtracting
\begin{equation*}
    \frac{1}{2} \lnorm{\femvec{e}{\vec{B}}{i-1}}{2}^2 + \frac{1}{2} \lnorm{\femvec{e}{\vec{B}}{i}}{2}^2
\end{equation*}
from both sides and grouping the terms appropriately.
\end{proof}

Finally, we estimate the magnetisation error $\femvec{e}{\vec{m}}{i}$.

\begin{lemma} \label{lem:inductive-step-magnetisation}
Let $\varepsilon > 0$, $n \geq 2$, $1 \leq i \leq n$, and assume that \eqref{eqn:tau-growth} and \eqref{eqn:inductive-assumption} hold. The magnetisation error term $\femvec{e}{\vec{m}}{i}$ satisfies the estimate
\begin{align*}
    \lnorm{\femvec{e}{\vec{m}}{i}}{2}^2 - \lnorm{\femvec{e}{\vec{m}}{i-1}}{2}^2
    + 2 \tau \hnorm{\femvec{e}{\vec{m}}{i}}{1}^2 &\leq c(\varepsilon) \tau
    \left(\tau^2 + h^{2(\ell+1)} + h^{2(k+1)} + h^{2r} + \tau^{-2} h^{2(r+1)}\right)\\
    &\qquad + C \tau \left(\lnorm{\femvec{e}{\vec{v}}{i}}{2}^2 + \lnorm{\femvec{e}{\vec{B}}{i}}{2}^2\right) \\
    &\qquad + c(\varepsilon) \tau \hnorm{\femvec{e}{\vec{m}}{i}}{1}^2 + C \tau \hnorm{\femvec{e}{\vec{m}}{i-1}}{1}^2 \\
    &\qquad + \varepsilon \tau \lnorm{\Delta_h \femvec{e}{\vec{m}}{i}}{2}^2,
\end{align*}
where $c(\varepsilon)$ and $C$ may depend only on the exact solutions and the domain $\Omega$.
\end{lemma}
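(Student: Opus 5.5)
The plan mirrors the proofs of Lemmas \ref{lem:inductive-step-velocity} and \ref{lem:inductive-step-magnetic-field}. I set $\vec{\xi}_h = \femvec{e}{\vec{m}}{i}$ in \eqref{eqn:error-magnetisation} and multiply by $\tau$. I then expand $\tau \discreteDtau \femvec{\delta}{\vec{m}}{i} = \femvec{E}{\vec{m}}{i} + \femvec{e}{\vec{m}}{i} - \femvec{\delta}{\vec{m}}{i-1}$ and add $\tau\lnorm{\femvec{e}{\vec{m}}{i}}{2}^2$ to both sides. This gives an identity with $\lnorm{\femvec{e}{\vec{m}}{i}}{2}^2 + \tau\hnorm{\femvec{e}{\vec{m}}{i}}{1}^2$ on the left. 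The right-hand side is a sum of integrals $I_1,\dots,I_{N}$:
\begin{itemize}
\item the two projection/shift terms $-\liprod{\femvec{E}{\vec{m}}{i}}{\femvec{e}{\vec{m}}{i}}$ and $\liprod{\femvec{\delta}{\vec{m}}{i-1}}{\femvec{e}{\vec{m}}{i}}$;
\item the convection, cross-product, $|\nabla\vec{m}|^2$-type and $\vec{B}$-coupling terms, each multiplied by $\tau$;
\item $\tau R_3(\femvec{e}{\vec{m}}{i})$.
\end{itemize}

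The two projection/shift terms are handled exactly as $I_1, I_2$ before, now with \eqref{eqn:ritz-proj-estimates-approximation}. This yields $C\tau^{-1}h^{2(r+1)}$, $\tfrac12\lnorm{\femvec{e}{\vec{m}}{i-1}}{2}^2+\tfrac12\lnorm{\femvec{e}{\vec{m}}{i}}{2}^2$ and $C\tau\lnorm{\femvec{e}{\vec{m}}{i}}{2}^2$. The truncation term is bounded by Lemma \ref{lem:truncation-estimates}.

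The lower-order terms are handled by Hölder, Young, the regularity \eqref{eqn:exact-solution-regularity} and the $\LB^\infty$, $\W^{1,3}$ bounds on $\femvec{m}{h}{i-1},\femvec{B}{h}{i-1}$ from Lemma \ref{lem:induction-assumptions-solution-estimates}:
\begin{itemize}
\item $(\femvec{\delta}{\vec{v}}{i}\cdot\nabla)\femvec{m}{h}{i-1}$: I put $\nabla\femvec{m}{h}{i-1}\in\LB^3$, $\femvec{\delta}{\vec{v}}{i}\in\LB^2$ and $\femvec{e}{\vec{m}}{i}\in\LB^6$. This gives $C\tau(h^{2(\ell+1)}+\lnorm{\femvec{e}{\vec{v}}{i}}{2}^2)+c(\varepsilon)\tau\hnorm{\femvec{e}{\vec{m}}{i}}{1}^2$.
\item $(\vec{v}^i\cdot\nabla)\femvec{\delta}{\vec{m}}{i-1}$, $\femvec{\delta}{\vec{m}}{i-1}\times\Delta\vec{m}^i$ and $(\nabla\vec{m}^i\cdot\nabla\vec{m}^{i-1})\femvec{\delta}{\vec{m}}{i-1}$: these give $C\tau h^{2r}+C\tau\hnorm{\femvec{e}{\vec{m}}{i-1}}{1}^2+C\tau\lnorm{\femvec{e}{\vec{m}}{i}}{2}^2$.
\item The $\vec{B}$ terms $\femvec{m}{h}{i-1}\times\femvec{\delta}{\vec{B}}{i}$ and $\femvec{m}{h}{i-1}\times(\femvec{m}{h}{i-1}\times\femvec{\delta}{\vec{B}}{i})$: using \eqref{eqn:maxwell-proj-estimates-approximation}, these give $C\tau(h^{2(k+1)}+\lnorm{\femvec{e}{\vec{B}}{i}}{2}^2+\lnorm{\femvec{e}{\vec{m}}{i}}{2}^2)$. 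The terms with $\femvec{\delta}{\vec{m}}{i-1}$ multiplied by $\vec{B}^i$ are analogous.
\item The gradient terms $(\nabla\femvec{\delta}{\vec{m}}{i}\cdot\nabla\vec{m}^{i-1})\femvec{m}{h}{i-1}$ and $(\nabla\femvec{\delta}{\vec{m}}{i-1}\cdot\nabla\vec{m}^i)\femvec{m}{h}{i-1}$: these give $C\tau h^{2r}$ plus $C\tau\hnorm{\femvec{e}{\vec{m}}{i}}{1}^2$, respectively $C\tau\hnorm{\femvec{e}{\vec{m}}{i-1}}{1}^2$.
\item The nonlinear term $(\nabla\femvec{\delta}{\vec{m}}{i}\cdot\nabla\femvec{\delta}{\vec{m}}{i-1})\femvec{m}{h}{i-1}$: I use $\nabla\femvec{\delta}{\vec{m}}{i-1}\in\LB^3$ with bound $Ch^{1/2}$ from Lemma \ref{lem:induction-assumptions-error-estimates}, $\femvec{m}{h}{i-1}\in\LB^\infty$ and $\femvec{e}{\vec{m}}{i}\in\LB^6$. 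This produces only $C\tau h^{2r}+C\tau\hnorm{\femvec{e}{\vec{m}}{i}}{1}^2$.
\end{itemize}

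The main obstacle is the term $\tau\liprod{\femvec{m}{h}{i-1}\times\Delta_h\femvec{\delta}{\vec{m}}{i}}{\femvec{e}{\vec{m}}{i}}$. It contains the top-order quantity $\Delta_h\femvec{e}{\vec{m}}{i}$, which must be charged only as $\varepsilon\tau\lnorm{\Delta_h\femvec{e}{\vec{m}}{i}}{2}^2$. I split $\Delta_h\femvec{\delta}{\vec{m}}{i}=\Delta_h\femvec{E}{\vec{m}}{i}+\Delta_h\femvec{e}{\vec{m}}{i}$ by \eqref{eqn:discrete-laplacian-error-definition}. For the first piece, Lemma \ref{lem:elliptic-proj-estimates} gives $\Delta_h\femvec{E}{\vec{m}}{i}=\Delta\vec{m}^i-\Lproj\Delta\vec{m}^i$, which is $O(h^{r-1})$ in $\LB^2$ by Lemma \ref{lem:L2-proj-estimates}; the power drops because only $\vec{m}\in\HB^{r+1}$ is assumed. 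To recover $h^{r}$, I would instead use the $\LB^2$-orthogonality of $\Delta\vec{m}^i-\Lproj\Delta\vec{m}^i$. I subtract $\Lproj(\femvec{m}{h}{i-1}\times\femvec{e}{\vec{m}}{i})$, whose $\HB^1$-approximation error is $Ch\,\hnorm{\femvec{m}{h}{i-1}\times\femvec{e}{\vec{m}}{i}}{1}\le Ch\hnorm{\femvec{e}{\vec{m}}{i}}{1}$ using the $\W^{1,3}$ bound. This yields $C\tau h^{2r}+c(\varepsilon)\tau\hnorm{\femvec{e}{\vec{m}}{i}}{1}^2$. For the second piece, $\lnorm{\femvec{m}{h}{i-1}}{\infty}\le C$ and Young's inequality give $\varepsilon\tau\lnorm{\Delta_h\femvec{e}{\vec{m}}{i}}{2}^2+c(\varepsilon)\tau\lnorm{\femvec{e}{\vec{m}}{i}}{2}^2$.

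Finally I sum all the bounds and subtract $\tfrac12\lnorm{\femvec{e}{\vec{m}}{i-1}}{2}^2+\tfrac12\lnorm{\femvec{e}{\vec{m}}{i}}{2}^2$ from both sides. I then multiply by $2$ and absorb the $\lnorm{\cdot}{2}$ and $\hnorm{\cdot}{1}$ contributions of $\femvec{e}{\vec{m}}{i}$ into $c(\varepsilon)\tau\hnorm{\femvec{e}{\vec{m}}{i}}{1}^2$. This is permissible because the statement keeps that term on the right with a large constant. The result is the stated inequality.
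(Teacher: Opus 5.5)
Your proof follows the paper's argument almost line by line. You use the same test function $\femvec{e}{\vec{m}}{i}$ in \eqref{eqn:error-magnetisation}, the same splitting of terms, and the same H\"older/Young pairings, including the $Ch^{1/2}$ bound on $\nabla\femvec{\delta}{\vec{m}}{i-1}$ in $\LB^3$ for the quadratic gradient term. You also finish the same way, by subtracting $\tfrac12\lnorm{\femvec{e}{\vec{m}}{i-1}}{2}^2+\tfrac12\lnorm{\femvec{e}{\vec{m}}{i}}{2}^2$.

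The one real divergence is the projection part of $\tau\langle\femvec{m}{h}{i-1}\times\Delta_h\femvec{\delta}{\vec{m}}{i},\femvec{e}{\vec{m}}{i}\rangle$. The paper bounds it by $C\tau\lnorm{\Delta\vec{m}^i-\Lproj\Delta\vec{m}^i}{2}\lnorm{\femvec{e}{\vec{m}}{i}}{2}$ and asserts $\lnorm{\Delta\vec{m}^i-\Lproj\Delta\vec{m}^i}{2}\le Ch^{r+1}$. By Lemma \ref{lem:L2-proj-estimates} that would need $\Delta\vec{m}\in\HB^{r+1}(\Omega)$, i.e.\ $\vec{m}\in\HB^{r+3}(\Omega)$. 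Under \eqref{eqn:exact-solution-regularity} one only gets $O(h^{r-1})$, exactly as you point out.

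Your repair is correct. You use that $\Delta\vec{m}^i-\Lproj\Delta\vec{m}^i$ is $\LB^2$-orthogonal to $\V_{\vec{m}}$, move the cross product onto the test function, and gain one power of $h$ from $\lnorm{\vec{\phi}-\Lproj\vec{\phi}}{2}\le Ch\hnorm{\vec{\phi}}{1}$ with $\vec{\phi}=\femvec{m}{h}{i-1}\times\femvec{e}{\vec{m}}{i}$. The bound $\hnorm{\vec{\phi}}{1}\le C\hnorm{\femvec{e}{\vec{m}}{i}}{1}$ follows from the $\LB^\infty$ and $\W^{1,3}$ bounds of Lemma \ref{lem:induction-assumptions-solution-estimates}. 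This gives $C\tau h^{2r}+C\tau\hnorm{\femvec{e}{\vec{m}}{i}}{1}^2$, which the stated right-hand side absorbs.

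So the paper's route is shorter but tacitly uses two more derivatives of $\vec{m}$ than are assumed. Yours stays within the stated regularity. The price is a duality step that works here only because the test function $\femvec{e}{\vec{m}}{i}$ is controlled in $\HB^1$.
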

\begin{proof}
Setting $\vec{\xi}_h = \femvec{e}{\vec{m}}{i}$ in \eqref{eqn:error-magnetisation}, multiplying the result by $\tau$, expanding $\tau \discreteDtau \femvec{\delta}{\vec{m}}{i}$ as in Lemma \ref{lem:inductive-step-velocity}, and adding $ \tau \lnorm{\femvec{e}{\vec{m}}{i}}{2}^2$ to both sides, we obtain
\begin{align*}
    &\lnorm{\femvec{e}{\vec{m}}{i}}{2}^2 + \tau  \hnorm{\femvec{e}{\vec{m}}{i}}{1}^2 \\
    &\qquad = - \liprod{\femvec{E}{\vec{m}}{i}}{\femvec{e}{\vec{m}}{i}}+ \liprod{\femvec{\delta}{\vec{m}}{i-1}}{\femvec{e}{\vec{m}}{i}} \\
    &\qquad \qquad -\tau \liprod{(\femvec{\delta}{\vec{v}}{i} \cdot \nabla)\femvec{m}{h}{i-1}}{\femvec{e}{\vec{m}}{i}} - \tau \liprod{(\vec{v}^i \cdot \nabla) \femvec{\delta}{\vec{m}}{i-1}}{\femvec{e}{\vec{m}}{i}} \\
    &\qquad \qquad +  \tau \liprod{\femvec{\delta}{\vec{m}}{i-1} \times \Delta \vec{m}^i}{\femvec{e}{\vec{m}}{i}} +  \tau \liprod{\femvec{m}{h}{i-1} \times \Delta_h \femvec{\delta}{\vec{m}}{i}}{\femvec{e}{\vec{m}}{i}} \\
    &\qquad \qquad + \tau  \liprod{(\nabla \vec{m}^i \cdot \nabla \vec{m}^{i-1}) \femvec{\delta}{\vec{m}}{i-1}}{\femvec{e}{\vec{m}}{i}} + \tau  \liprod{(\nabla \femvec{\delta}{\vec{m}}{i} \cdot \nabla \vec{m}^{i-1})\femvec{m}{h}{i-1}}{\femvec{e}{\vec{m}}{i}}\\
    &\qquad \qquad  + \tau  \liprod{(\nabla \femvec{\delta}{\vec{m}}{i-1} \cdot \nabla \vec{m}^i) \femvec{m}{h}{i-1}}{\femvec{e}{\vec{m}}{i}} - \tau  \liprod{(\nabla \femvec{\delta}{\vec{m}}{i} \cdot \nabla \femvec{\delta}{\vec{m}}{i-1}) \femvec{m}{h}{i-1}}{\femvec{e}{\vec{m}}{i}}\\
    &\qquad \qquad  + \tau \liprod{\femvec{\delta}{\vec{m}}{i-1} \times \vec{B}^i}{\femvec{e}{\vec{m}}{i}} + \tau \liprod{\femvec{m}{h}{i-1} \times \femvec{\delta}{\vec{B}}{i}}{\femvec{e}{\vec{m}}{i}}\\
    &\qquad \qquad  + \tau \liprod{\femvec{\delta}{\vec{m}}{i-1} \times (\vec{m}^{i-1} \times \vec{B}^i)}{\femvec{e}{\vec{m}}{i}} + \tau \liprod{\femvec{m}{h}{i-1} \times (\femvec{\delta}{\vec{m}}{i-1} \times \vec{B}^i)}{\femvec{e}{\vec{m}}{i}}\\
    &\qquad \qquad  + \tau \liprod{\femvec{m}{h}{i-1} \times (\femvec{m}{h}{i-1} \times \femvec{\delta}{\vec{B}}{i})}{\femvec{e}{\vec{m}}{i}}  + \tau R_3(\femvec{e}{\vec{m}}{i}) +  \tau \lnorm{\femvec{e}{\vec{m}}{i}}{2}^2\\
    &\qquad = I_1 + \cdots + I_{16} +  \tau \lnorm{\femvec{e}{\vec{m}}{i}}{2}^2.
\end{align*}
In estimating these integrals, we make regular use of Hölder's inequality, the Sobolev embedding theorem, Young's inequality, the regularity of our exact solutions $\vec{v}, \vec{B}$ and $\vec{m}$ in \eqref{eqn:exact-solution-regularity} and Lemma \ref{lem:induction-assumptions-solution-estimates}. Similarly to $I_1$ and $I_2$ in Lemma \ref{lem:inductive-step-velocity} but using only Holder's inequality, and not the $\HB^{-1}(\Omega)$ estimate, we obtain
\begin{align*}
    |I_1| &\leq C \tau^{-1} h^{2(r+1)} + C \tau \lnorm{\femvec{e}{\vec{m}}{i}}{2}^2, \\
    |I_2| &\leq C \tau^{-1} h^{2(r+1)} + \frac{1}{2} \lnorm{\femvec{e}{\vec{m}}{i-1}}{2}^2 + \frac{1}{2} \lnorm{\femvec{e}{\vec{m}}{i}}{2}^2 + C \tau \lnorm{\femvec{e}{\vec{m}}{i}}{2}^2.
\end{align*}
Using the Sobolev embedding $\HB^1(\Omega) \hookrightarrow \LB^6(\Omega)$, \eqref{eqn:stokes-proj-estimates-approximation} and Lemma \ref{lem:induction-assumptions-solution-estimates} we obtain
\begin{align*}
    |I_3| &:= \left|\tau \liprod{(\femvec{\delta}{\vec{v}}{i} \cdot \nabla)\femvec{m}{h}{i-1}}{\femvec{e}{\vec{m}}{i}}\right|\\
    &\leq \tau \lnorm{\femvec{e}{\vec{m}}{i}}{6} \lnorm{\nabla \femvec{m}{h}{i-1}}{3} \lnorm{\femvec{\delta}{\vec{v}}{i}}{2} \\
    &\leq C \tau \hnorm{\femvec{e}{\vec{m}}{i}}{1} \lnorm{\femvec{\delta}{\vec{v}}{i}}{2} \\
    &\leq C \tau \left( h^{\ell+1} + \lnorm{\femvec{e}{\vec{v}}{i}}{2}\right) \hnorm{\femvec{e}{\vec{m}}{i}}{1} \\
    &\leq C \tau h^{2(\ell+1)} + C \tau \lnorm{\femvec{e}{\vec{v}}{i}}{2}^2 + C \tau \hnorm{\femvec{e}{\vec{m}}{i}}{1}^2.
\end{align*}
For $I_4$, we bound it similarly to $I_4$ in Lemma \ref{lem:inductive-step-velocity}
\begin{align*}
    |I_4| &\leq C \tau h^{2r} + C \tau \hnorm{\femvec{e}{\vec{m}}{i-1}}{1}^2 + C \tau \lnorm{\femvec{e}{\vec{m}}{i}}{2}^2,
\end{align*}
with $I_5$ also being estimated in the same manner to produce
\begin{align*}
    |I_5| &\leq C \tau h^{2(r+1)} + C \tau \lnorm{\femvec{e}{\vec{m}}{i-1}}{2}^2 + C \tau \lnorm{\femvec{e}{\vec{m}}{i}}{2}^2.
\end{align*}
Noting \eqref{eqn:discrete-laplacian-error-definition} and Lemma \ref{lem:L2-proj-estimates}, we see that
\begin{align*}
    |I_6| &:= \left|\tau \liprod{\femvec{m}{h}{i-1} \times \Delta_h \femvec{\delta}{\vec{m}}{i}}{\femvec{e}{\vec{m}}{i}}\right| \\
    &\leq \tau \lnorm{\femvec{m}{h}{i-1}}{\infty} \lnorm{\Delta_h \femvec{\delta}{\vec{m}}{i}}{2} \lnorm{\femvec{e}{\vec{m}}{i}}{2} \\
    &\leq C \tau \lnorm{\Delta \vec{m}^i - \Lproj \Delta \vec{m}^i}{2} \lnorm{\femvec{e}{\vec{m}}{i}}{2} + C \tau \lnorm{\Delta_h \femvec{e}{\vec{m}}{i}}{2} \lnorm{\femvec{e}{\vec{m}}{i}}{2} \\
    &\leq C \tau h^{r+1} \lnorm{\femvec{e}{\vec{m}}{i}}{2} + C \tau \lnorm{\femvec{e}{\vec{m}}{i}}{2} \lnorm{\Delta_h \femvec{e}{\vec{m}}{i}}{2} \\
    &\leq C \tau h^{2(r+1)} + c(\varepsilon) \tau \lnorm{\femvec{e}{\vec{m}}{i}}{2}^2 + \varepsilon \tau \lnorm{\Delta_h \femvec{e}{\vec{m}}{i}}{2}^2.
\end{align*}
The inner products $I_7$--$I_9$ can be estimated in the same manner as $I_4$ so that we obtain
\begin{align*}
    |I_7| &\leq C \tau h^{2(r+1)} + C \tau \lnorm{\femvec{e}{\vec{m}}{i-1}}{2}^2 + C \tau \lnorm{\femvec{e}{\vec{m}}{i}}{2}^2, \\
    |I_8| &\leq C \tau h^{2r} + C \tau \hnorm{\femvec{e}{\vec{m}}{i}}{1}^2, \\
    |I_9| &\leq C \tau h^{2r} + C \tau \lnorm{\femvec{e}{\vec{m}}{i}}{2}^2 + C \tau \hnorm{\femvec{e}{\vec{m}}{i-1}}{1}^2,
\end{align*}
while for $I_{10}$ we use the same analysis as for $I_3$ and $I_4$ to produce
\begin{align*}
    |I_{10}| &\leq C \tau h^{2r} + C \tau \hnorm{\femvec{e}{\vec{m}}{i}}{1}^2.
\end{align*}
With the help of \eqref{eqn:maxwell-proj-estimates-approximation}, the estimates for $I_{11}$--$I_{15}$ follow in the same manner as $I_4$ so that we have
\begin{align*}
    |I_{11}| &\leq C \tau h^{2(r+1)} + C \tau \lnorm{\femvec{e}{\vec{m}}{i-1}}{2}^2 + C \tau \lnorm{\femvec{e}{\vec{m}}{i}}{2}^2, \\
    |I_{12}| &\leq C \tau h^{2(k+1)} + C \tau \lnorm{\femvec{e}{\vec{B}}{i}}{2}^2 + C \tau \lnorm{\femvec{e}{\vec{m}}{i}}{2}^2, \\
    |I_{13}| &\leq C \tau h^{2(r+1)} + C \tau \lnorm{\femvec{e}{\vec{m}}{i-1}}{2}^2 + C \tau \lnorm{\femvec{e}{\vec{m}}{i}}{2}^2, \\
    |I_{14}| &\leq C \tau h^{2(r+1)} + C \tau \lnorm{\femvec{e}{\vec{m}}{i-1}}{2}^2 + C \tau \lnorm{\femvec{e}{\vec{m}}{i}}{2}^2, \\
    |I_{15}| &\leq C \tau h^{2(k+1)} + C \tau \lnorm{\femvec{e}{\vec{B}}{i}}{2}^2 + C \tau \lnorm{\femvec{e}{\vec{m}}{i}}{2}^2.
\end{align*}
Lastly, from Lemma \ref{lem:truncation-estimates} we see that
\begin{align*}
    |I_{16}| &\leq C \tau^3 + C \tau \lnorm{\femvec{e}{\vec{m}}{i}}{2}^2.
\end{align*}
The result is proved by summing up the estimates, subtracting
\begin{equation*}
    \frac{1}{2} \lnorm{\femvec{e}{\vec{m}}{i-1}}{2}^2 + \frac{1}{2} \lnorm{\femvec{e}{\vec{m}}{i}}{2}^2
\end{equation*}
from both sides and grouping the terms appropriately.
\end{proof}

We can obtain more estimates on the magnetisation error $\femvec{e}{\vec{m}}{i}$.

\begin{lemma} \label{lem:inductive-step-magnetisation-H1}
Let $\varepsilon > 0$, $n \geq 2$, $1 \leq i \leq n$, and assume that \eqref{eqn:tau-growth} and \eqref{eqn:inductive-assumption} hold. The magnetisation error term $\femvec{e}{\vec{m}}{i}$ satisfies the estimate
\begin{align*}
    \lnorm{\nabla \femvec{e}{\vec{m}}{i}}{2}^2 - \lnorm{\nabla
    \femvec{e}{\vec{m}}{i-1}}{2}^2 + 2 \tau \lnorm{\Delta_h
\femvec{e}{\vec{m}}{i}}{2}^2 &\leq c(\varepsilon) \tau \left(\tau^2 + h^{2\ell} + h^{2r} + \tau^{-2} h^{2(r+1)}\right)\\
    &\qquad + c(\varepsilon) \tau \left(\lnorm{\femvec{e}{\vec{v}}{i}}{2}^2 +  \lnorm{\femvec{e}{\vec{B}}{i}}{2}^2\right) \\
    &\qquad + C h^{1/2} \tau \hnorm{\femvec{e}{\vec{v}}{i}}{1}^2 + c(\varepsilon) \tau \hnorm{\femvec{e}{\vec{m}}{i}}{1}^2 \\
    &\qquad + c(\varepsilon) \tau \hnorm{\femvec{e}{\vec{m}}{i-1}}{1}^2 \\
    &\qquad + C (\varepsilon + h^{1/2}) \tau \lnorm{\Delta_h \femvec{e}{\vec{m}}{i}}{2}^2,
\end{align*}
where the constant $c(\varepsilon)$ and $C$ may depend only on the exact solution and the domain $\Omega$.
\end{lemma}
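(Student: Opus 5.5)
The plan is to test the magnetisation error equation \eqref{eqn:error-magnetisation} with $\vec{\xi}_h = -\Delta_h \femvec{e}{\vec{m}}{i}\in\V_{\vec{m}}$ and multiply by $\tau$. By the definition \eqref{eqn:discrete-laplacian} of $\Delta_h$, the diffusion term becomes $\tau\liprod{\nabla \femvec{e}{\vec{m}}{i}}{\nabla(-\Delta_h\femvec{e}{\vec{m}}{i})} = \tau\lnorm{\Delta_h \femvec{e}{\vec{m}}{i}}{2}^2$. For the time-difference term we expand $\tau\discreteDtau\femvec{\delta}{\vec{m}}{i} = \femvec{E}{\vec{m}}{i}-\femvec{E}{\vec{m}}{i-1} + \femvec{e}{\vec{m}}{i}-\femvec{e}{\vec{m}}{i-1}$. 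The $\vec{e}$-part gives
\[
\liprod{\nabla(\femvec{e}{\vec{m}}{i}-\femvec{e}{\vec{m}}{i-1})}{\nabla\femvec{e}{\vec{m}}{i}} = \tfrac12\big(\lnorm{\nabla\femvec{e}{\vec{m}}{i}}{2}^2-\lnorm{\nabla\femvec{e}{\vec{m}}{i-1}}{2}^2+\lnorm{\nabla(\femvec{e}{\vec{m}}{i}-\femvec{e}{\vec{m}}{i-1})}{2}^2\big).
\]
Dropping the last nonnegative term and multiplying by $2$ produces the left-hand side of the claim.

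The $\vec{E}$-part is handled with the Ritz projection, but the naive bound for it is too crude, so we proceed as follows. We write $\femvec{E}{\vec{m}}{i}-\femvec{E}{\vec{m}}{i-1} = \tau (I-\ellipticProj)\discreteDtau \vec{m}^i$ and bound it in $\LB^2$ by $C\tau h^{r+1}$ only if $\partial_t\vec m$ has the needed regularity. Otherwise we split it crudely as $\lnorm{\femvec{E}{\vec{m}}{i}}{2}+\lnorm{\femvec{E}{\vec{m}}{i-1}}{2}\lsim h^{r+1}$ and apply Young's inequality, $C h^{r+1}\lnorm{\Delta_h\femvec{e}{\vec{m}}{i}}{2}\le c(\varepsilon)\tau^{-1}h^{2(r+1)}+\varepsilon\tau\lnorm{\Delta_h\femvec{e}{\vec{m}}{i}}{2}^2$. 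This is exactly the source of the $\tau^{-2}h^{2(r+1)}$ term in $R$. The truncation term $\tau R_3(-\Delta_h\femvec e{\vec m}i)$ is bounded by Lemma \ref{lem:truncation-estimates} with $\varepsilon$.

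The remaining terms are estimated one by one. In each, the factor $\Delta_h\femvec{e}{\vec{m}}{i}$ is kept in $\LB^2$, and the other factors are controlled with Hölder's inequality, Lemma \ref{lem:induction-assumptions-solution-estimates} ($\LB^\infty$ and $\W^{1,3}$ bounds on $\femvec{m}{h}{i-1}$, $\femvec{v}{h}{i-1}$ and $\femvec{B}{h}{i-1}$), the projection Lemmas \ref{lem:L2-proj-estimates}--\ref{lem:maxwell-proj-estimates}, and Young's inequality with $\varepsilon$.
\begin{itemize}
\item For the convective term $(\femvec\delta{\vec v}i\cdot\nabla)\femvec m h{i-1}$, we split $\femvec\delta{\vec v}i = \femvec E{\vec v}i + \femvec e{\vec v}i$. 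Using $\lnorm{\nabla\femvec m h{i-1}}{3}\le C$ and $\HB^1\hookrightarrow\LB^6$ gives $\lnorm{\femvec e{\vec v}i}{6}\lnorm{\nabla\femvec m h{i-1}}{3}\lnorm{\Delta_h\femvec e{\vec m}i}{2}$. This is a product of first-order quantities, so we instead write $\nabla\femvec m h{i-1} = \nabla\vec m^{i-1}-\nabla\femvec\delta{\vec m}{i-1}$. The exact part gives $c(\varepsilon)\tau\lnorm{\femvec e{\vec v}i}{2}^2$. The $\femvec\delta{\vec m}{i-1}$ part, by \eqref{eqn:induction-estimate-linf-w13}, gives $Ch^{1/2}\tau(\hnorm{\femvec e{\vec v}i}{1}^2+\lnorm{\Delta_h\femvec e{\vec m}i}{2}^2)$.
\item The gyromagnetic term $\femvec m h{i-1}\times\Delta_h\femvec\delta{\vec m}i$ is the key cancellation. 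Writing $\Delta_h\femvec\delta{\vec m}i = (\Delta\vec m^i-\Lproj\Delta\vec m^i)+\Delta_h\femvec e{\vec m}i$, the $\vec e$-part vanishes because $(\vec a\times\vec b)\cdot\vec b=0$ pointwise. The remainder is $O(h^{r+1})$ by Lemmas \ref{lem:elliptic-proj-estimates} and \ref{lem:L2-proj-estimates}.
\item The gradient-coupling terms are split the same way. The term involving $\nabla\femvec\delta{\vec m}i\cdot\nabla\femvec\delta{\vec m}{i-1}$ uses the $\W^{1,3}$ bound in \eqref{eqn:induction-estimate-linf-w13}. This requires $\lnorm{\nabla\femvec e{\vec m}i}{6}$, which we control by the discrete elliptic estimate $\lnorm{\nabla\femvec e{\vec m}i}{6}\lsim\hnorm{\femvec e{\vec m}i}{1}+\lnorm{\Delta_h\femvec e{\vec m}i}{2}$, valid on the convex domain. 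This produces the $Ch^{1/2}\tau\lnorm{\Delta_h\femvec e{\vec m}i}{2}^2$ contribution.
\item The $\vec B$-terms and the lower-order $\femvec\delta{\vec m}{i-1}$ terms are routine and give $\lnorm{\femvec e{\vec B}i}{2}^2$, $\hnorm{\femvec e{\vec m}{i-1}}{1}^2$ and $h^{2r}$ contributions.
\item The convective term $(\vec v^i\cdot\nabla)\femvec\delta{\vec m}{i-1}$ gives $h^{2r}$ and $\hnorm{\femvec e{\vec m}{i-1}}{1}^2$. The $h^{2\ell}$ term arises from $\nabla\femvec E{\vec v}i$ only if we integrate by parts, so it is harmless.
\end{itemize}

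I expect the main obstacle to be the genuinely nonlinear and velocity-coupling terms: $(\femvec e{\vec v}i\cdot\nabla)\femvec m h{i-1}$ tested against $\Delta_h\femvec e{\vec m}i$, and $(\nabla\femvec\delta{\vec m}i\cdot\nabla\femvec\delta{\vec m}{i-1})\femvec m h{i-1}$. Both need the smallness $h^{1/2}$ from Lemma \ref{lem:induction-assumptions-error-estimates}, together with the discrete $\W^{1,6}$ bound via $\Delta_h$. In the discrete $\W^{1,6}$ step I would first try the inverse estimate as a fallback, which would cost $h^{-1}$ times $h^{1/2}$. Summing all bounds and collecting terms gives the stated inequality.
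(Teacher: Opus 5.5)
Your proposal is correct and follows the paper's proof essentially step by step. You test \eqref{eqn:error-magnetisation} with $-\Delta_h \femvec{e}{\vec{m}}{i}$, bound the projection part of $\tau \discreteDtau \femvec{\delta}{\vec{m}}{i}$ crudely via Young's inequality to get the $\tau^{-1}h^{2(r+1)}$ contribution, use $(\vec{a}\times\vec{b})\cdot\vec{b}=0$ to remove the $\Delta_h \femvec{e}{\vec{m}}{i}$ part of the gyromagnetic term, split $\nabla \femvec{m}{h}{i-1} = \nabla \vec{m}^{i-1} - \nabla \femvec{\delta}{\vec{m}}{i-1}$ to gain the factor $h^{1/2}$ from Lemma~\ref{lem:induction-assumptions-error-estimates}, and control the quadratic gradient term with the discrete bound $\lnorm{\nabla \vec{u}_h}{6} \lesssim \lnorm{\Delta_h \vec{u}_h}{2}$, exactly as the paper does. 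The differences (the polarization identity instead of Young's inequality on $\langle \nabla \femvec{e}{\vec{m}}{i-1}, \nabla \femvec{e}{\vec{m}}{i}\rangle$, and an extra $\hnorm{\femvec{e}{\vec{m}}{i}}{1}$ term in your discrete $\W^{1,6}$ bound) are cosmetic, but the inverse-estimate fallback you mention would not close, because it leaves a factor $h^{-1}$ in front of $\tau \lnorm{\nabla \femvec{e}{\vec{m}}{i}}{2}^2$.
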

\begin{proof}
Setting $\vec{\xi}_h = -\Delta_h \femvec{e}{\vec{m}}{i}$ in \eqref{eqn:error-magnetisation}, multiplying the resulting equation by $\tau$, expanding $\tau \discreteDtau \femvec{\delta}{\vec{m}}{i}$ as in Lemma \ref{lem:inductive-step-velocity}, and using \eqref{eqn:discrete-laplacian}, we obtain
\begin{align*}
    &\lnorm{\nabla \femvec{e}{\vec{m}}{i}}{2}^2 + \tau  \lnorm{\Delta_h \femvec{e}{\vec{m}}{i}}{2}^2 \\
    &\qquad = \liprod{\femvec{E}{\vec{m}}{i}}{\Delta_h \femvec{e}{\vec{m}}{i}} - \liprod{\femvec{\delta}{\vec{m}}{i-1}}{\Delta_h \femvec{e}{\vec{m}}{i}} \\
    &\qquad \qquad  + \tau \liprod{(\femvec{\delta}{\vec{v}}{i} \cdot \nabla)\femvec{m}{h}{i-1}}{\Delta_h \femvec{e}{\vec{m}}{i}} + \tau \liprod{(\vec{v}^i \cdot \nabla) \femvec{\delta}{\vec{m}}{i-1}}{\Delta_h \femvec{e}{\vec{m}}{i}} \\
    &\qquad \qquad -  \tau \liprod{\femvec{\delta}{\vec{m}}{i-1} \times \Delta \vec{m}^i}{\Delta_h \femvec{e}{\vec{m}}{i}} -  \tau \liprod{\femvec{m}{h}{i-1} \times \Delta_h \femvec{\delta}{\vec{m}}{i}}{\Delta_h \femvec{e}{\vec{m}}{i}} \\
    &\qquad \qquad - \tau  \liprod{(\nabla \vec{m}^i \cdot \nabla \vec{m}^{i-1}) \femvec{\delta}{\vec{m}}{i-1}}{\Delta_h \femvec{e}{\vec{m}}{i}} - \tau  \liprod{(\nabla \femvec{\delta}{\vec{m}}{i} \cdot \nabla \vec{m}^{i-1})\femvec{m}{h}{i-1}}{\Delta_h \femvec{e}{\vec{m}}{i}}\\
    &\qquad \qquad  - \tau  \liprod{(\nabla \femvec{\delta}{\vec{m}}{i-1} \cdot \nabla \vec{m}^i) \femvec{m}{h}{i-1}}{\Delta_h \femvec{e}{\vec{m}}{i}} + \tau  \liprod{(\nabla \femvec{\delta}{\vec{m}}{i} \cdot \nabla \femvec{\delta}{\vec{m}}{i-1}) \femvec{m}{h}{i-1}}{\Delta_h \femvec{e}{\vec{m}}{i}}\\
    &\qquad \qquad  - \tau \liprod{\femvec{\delta}{\vec{m}}{i-1} \times \vec{B}^i}{\Delta_h \femvec{e}{\vec{m}}{i}} + \tau \liprod{\femvec{m}{h}{i-1} \times \femvec{\delta}{\vec{B}}{i}}{\Delta_h \femvec{e}{\vec{m}}{i}}\\
    &\qquad \qquad  - \tau \liprod{\femvec{\delta}{\vec{m}}{i-1} \times (\vec{m}^{i-1} \times \vec{B}^i)}{\Delta_h \femvec{e}{\vec{m}}{i}} - \tau \liprod{\femvec{m}{h}{i-1} \times (\femvec{\delta}{\vec{m}}{i-1} \times \vec{B}^i)}{\Delta_h \femvec{e}{\vec{m}}{i}}\\
    &\qquad \qquad  - \tau \liprod{\femvec{m}{h}{i-1} \times (\femvec{m}{h}{i-1} \times \femvec{\delta}{\vec{B}}{i})}{\Delta_h \femvec{e}{\vec{m}}{i}} + \tau R_3(-\Delta_h \femvec{e}{\vec{m}}{i}) \\
    &\qquad = I_1 + \cdots + I_{16}.
\end{align*}
In estimating these integrals, we make regular use of Hölder's inequality, the Sobolev embedding theorem, Young's inequality, the regularity of our exact solutions $\vec{v}, \vec{B}$ and $\vec{m}$ in \eqref{eqn:exact-solution-regularity} and Lemma \ref{lem:induction-assumptions-solution-estimates}. As in Lemma \ref{lem:inductive-step-magnetisation} we have
\begin{align*}
    |I_1| &\leq c(\varepsilon) \tau^{-1} h^{2(r+1)} + \varepsilon \tau \lnorm{\Delta_h \femvec{e}{\vec{m}}{i}}{2}^2,
\end{align*}
but for $I_2$ we use \eqref{eqn:discrete-laplacian} to obtain
\begin{align*}
    I_2 &:= -\liprod{\femvec{\delta}{\vec{m}}{i-1}}{\Delta_h \femvec{e}{\vec{m}}{i}} \\
    &= -\liprod{\femvec{E}{\vec{m}}{i-1}}{\Delta_h \femvec{e}{\vec{m}}{i}} + \liprod{\nabla \femvec{e}{\vec{m}}{i-1}}{\nabla \femvec{e}{\vec{m}}{i}},
\end{align*}
and so,
\begin{align*}
    |I_2| &\leq c(\varepsilon) \tau^{-1} h^{2(r+1)} + \varepsilon \tau \lnorm{\Delta_h \femvec{e}{\vec{m}}{i}}{2}^2 + \frac{1}{2} \lnorm{\nabla \femvec{e}{\vec{m}}{i-1}}{2}^2 + \frac{1}{2} \lnorm{\nabla \femvec{e}{\vec{m}}{i}}{2}^2.
\end{align*}
Using the Sobolev embedding $\HB^1(\Omega) \hookrightarrow \LB^6(\Omega)$, \eqref{eqn:stokes-proj-estimates-approximation} and Lemmas \ref{lem:induction-assumptions-error-estimates}--\ref{lem:induction-assumptions-solution-estimates} we obtain
\begin{align*}
    |I_3| &:= \left|\tau \liprod{(\femvec{\delta}{\vec{v}}{i} \cdot \nabla)\femvec{m}{h}{i-1}}{\Delta_h \femvec{e}{\vec{m}}{i}}\right| \\
    &\leq \left|\tau \liprod{(\femvec{\delta}{\vec{v}}{i} \cdot \nabla)\vec{m}^i}{\Delta_h \femvec{e}{\vec{m}}{i}}\right| + \left|\tau \liprod{(\femvec{\delta}{\vec{v}}{i} \cdot \nabla)\femvec{\delta}{\vec{m}}{i-1}}{\Delta_h \femvec{e}{\vec{m}}{i}}\right| \\
    &\leq \tau \lnorm{\nabla \vec{m}^{i-1}}{\infty} \lnorm{\femvec{\delta}{\vec{v}}{i}}{2} \lnorm{\Delta_h \femvec{e}{\vec{m}}{i}}{2} + \tau \lnorm{\femvec{\delta}{\vec{v}}{i}}{6} \lnorm{\nabla \femvec{\delta}{\vec{m}}{i-1}}{3} \lnorm{\Delta_h \femvec{e}{\vec{m}}{i}}{2} \\
    &\leq C \tau \lnorm{\femvec{\delta}{\vec{v}}{i}}{2} \lnorm{\Delta_h \femvec{e}{\vec{m}}{i}}{2} + C h^{1/2} \tau \hnorm{\femvec{\delta}{\vec{v}}{i}}{1} \lnorm{\Delta_h \femvec{e}{\vec{m}}{i}}{2} \\
    &\leq C \tau \left(h^{\ell+1} + \lnorm{\femvec{e}{\vec{v}}{i}}{2}\right) \lnorm{\Delta_h \femvec{e}{\vec{m}}{i}}{2} + C h^{1/2} \tau \left(h^{\ell} + \hnorm{\femvec{e}{\vec{v}}{i}}{1}\right) \lnorm{\Delta_h \femvec{e}{\vec{m}}{i}}{2} \\
    &\leq c(\varepsilon) \tau h^{2(\ell+1)} + C \tau h^{2 \ell} + c(\varepsilon) \tau \lnorm{\femvec{e}{\vec{v}}{i}}{2}^2 + C h^{1/2} \tau \hnorm{\femvec{e}{\vec{v}}{i}}{1}^2 + C (\varepsilon + h^{1/2}) \tau \lnorm{\Delta_h \femvec{e}{\vec{m}}{i}}{2}^2,
\end{align*}
For $I_4$ and $I_5$, we use \eqref{eqn:ritz-proj-estimates-approximation} and bound it similarly to $I_4$ in Lemma \ref{lem:inductive-step-velocity} to obtain
\begin{align*}
    |I_4| &\leq c(\varepsilon) \tau h^{2r} + c(\varepsilon) \tau \hnorm{\femvec{e}{\vec{m}}{i-1}}{1}^2 + \varepsilon \tau \lnorm{\Delta_h \femvec{e}{\vec{m}}{i}}{2}^2, \\
    |I_5| &\leq C \tau h^{2(r+1)} + C \tau \lnorm{\femvec{e}{\vec{m}}{i-1}}{2}^2 + \varepsilon \tau \lnorm{\Delta_h \femvec{e}{\vec{m}}{i}}{2}^2.
\end{align*}
Recalling \eqref{eqn:discrete-laplacian-error-definition}, Lemma \eqref{lem:L2-proj-estimates} and Lemma \ref{lem:induction-assumptions-solution-estimates}, we see that
\begin{align*}
    |I_6| &:= \left|\tau \liprod{\femvec{m}{h}{i-1} \times \Delta_h \femvec{\delta}{\vec{m}}{i}}{\Delta_h \femvec{e}{\vec{m}}{i}}\right|\\
    &\leq \tau \lnorm{\femvec{m}{h}{i-1}}{\infty} \lnorm{\Delta \vec{m}^i - \Lproj \Delta \vec{m}^i}{2} \lnorm{\Delta_h \femvec{e}{\vec{m}}{i}}{2} \\
    &\leq C \tau h^{r+1} \lnorm{\Delta_h \femvec{e}{\vec{m}}{i}}{2} \\
    &\leq c(\varepsilon) \tau h^{2(r+1)} + \varepsilon \tau \lnorm{\Delta_h \femvec{e}{\vec{m}}{i}}{2}^2.
\end{align*}
The estimates for $I_7$--$I_9$ follow in the same manner as $I_4$ so that we obtain
\begin{align*}
    |I_7| &\leq c(\varepsilon) \tau h^{2(r+1)} + c(\varepsilon) \tau \lnorm{\femvec{e}{\vec{m}}{i-1}}{2}^2 + \varepsilon \tau \lnorm{\Delta_h \femvec{e}{\vec{m}}{i}}{2}^2, \\
    |I_8| &\leq c(\varepsilon) \tau h^{2r} + c(\varepsilon) \tau \hnorm{\femvec{e}{\vec{m}}{i}}{1}^2 + \varepsilon \tau \lnorm{\Delta_h \femvec{e}{\vec{m}}{i}}{2}^2, \\
    |I_9| &\leq c(\varepsilon) \tau h^{2r} + c(\varepsilon) \tau \hnorm{\femvec{e}{\vec{m}}{i-1}}{1}^2 + \varepsilon \tau \lnorm{\Delta_h \femvec{e}{\vec{m}}{i}}{2}^2.
\end{align*}
Estimating $I_{10}$ is similar to estimating $I_3$ but requires the estimate (see e.g., \cite{gui2022-article})
\begin{equation} \label{eqn:discrete-laplacian-estimate}
    \lnorm{\nabla \vec{u}_h}{6} \leq C \lnorm{\Delta_h \vec{u}_h}{2}, \qquad \forall \vec{u}_h \in \V_h^m(\Omega)
\end{equation}
so that we have
\begin{align*}
    |I_{10}| &:= \left|\tau  \liprod{(\nabla \femvec{\delta}{\vec{m}}{i} \cdot \nabla \femvec{\delta}{\vec{m}}{i-1}) \femvec{m}{h}{i-1}}{\Delta_h \femvec{e}{\vec{m}}{i}}\right|\\
    &\leq \tau \lnorm{\femvec{m}{h }{i-1}}{\infty} \lnorm{\nabla \femvec{\delta}{\vec{m}}{i}}{6} \lnorm{\nabla \femvec{\delta}{\vec{m}}{i-1}}{3} \lnorm{\Delta_h \femvec{e}{\vec{m}}{i}}{2} \\
    &\leq C h^{1/2} \tau \lnorm{\nabla \femvec{\delta}{\vec{m}}{i}}{6} \lnorm{\Delta_h \femvec{e}{\vec{m}}{i}}{2} \\
    &\leq C h^{1/2} \tau \left(h^r + \lnorm{\nabla \femvec{e}{\vec{m}}{i}}{6}\right) \lnorm{\Delta_h \femvec{e}{\vec{m}}{i}}{2} \\
    &\leq C \tau h^r \lnorm{\Delta_h \femvec{e}{\vec{m}}{i}}{2} + C h^{1/2} \tau \lnorm{\Delta_h \femvec{e}{\vec{m}}{i}}{2}^2, \\
    &\leq C \tau h^{2r} + C(\varepsilon +  h^{1/2}) \tau \lnorm{\Delta_h \femvec{e}{\vec{m}}{i}}{2}^2.
\end{align*}
The inner products $I_{11}$--$I_{15}$ can be estimated using the same procedure as $I_4$, using \eqref{eqn:maxwell-proj-estimates-approximation} in addition, to obtain
\begin{align*}
    |I_{11}| &\leq c(\varepsilon) \tau h^{2(r+1)} +c(\varepsilon) \tau \lnorm{\femvec{e}{\vec{m}}{i-1}}{2}^2 + \varepsilon \tau \lnorm{\Delta_h \femvec{e}{\vec{m}}{i}}{2}^2, \\
    |I_{12}| &\leq c(\varepsilon) \tau h^{2(k+1)} + c(\varepsilon) \tau \lnorm{\femvec{e}{\vec{B}}{i}}{2}^2 + \varepsilon \tau \lnorm{\Delta_h \femvec{e}{\vec{m}}{i}}{2}^2, \\
    |I_{13}| &\leq c(\varepsilon) \tau h^{2(r+1)} + c(\varepsilon) \tau \lnorm{\femvec{e}{\vec{m}}{i-1}}{2}^2 + \varepsilon \tau \lnorm{\Delta_h \femvec{e}{\vec{m}}{i}}{2}^2, \\
    |I_{14}| &\leq c(\varepsilon) \tau h^{2(r+1)} + c(\varepsilon) \tau \lnorm{\femvec{e}{\vec{m}}{i-1}}{2}^2 + \varepsilon \tau \lnorm{\Delta_h \femvec{e}{\vec{m}}{i}}{2}^2, \\
    |I_{15}| &\leq c(\varepsilon) \tau h^{2(k+1)} + c(\varepsilon) \tau \lnorm{\femvec{e}{\vec{B}}{i}}{2}^2 + \varepsilon \tau \lnorm{\Delta_h \femvec{e}{\vec{m}}{i}}{2}^2.
\end{align*}
Lastly, from Lemma \ref{lem:truncation-estimates} we see that
\begin{align*}
    |I_{16}| &\leq c(\varepsilon) \tau^3 + \varepsilon \tau \lnorm{\Delta_h \femvec{e}{\vec{m}}{i}}{2}^2.
\end{align*}
The result is proved by summing up the estimates and subtracting
\begin{equation*}
    \frac{1}{2} \lnorm{\nabla \femvec{e}{\vec{m}}{i-1}}{2}^2 + \frac{1}{2} \lnorm{\nabla \femvec{e}{\vec{m}}{i}}{2}^2
\end{equation*}
from both sides.
\end{proof}

The following estimates are technical and are required for the proof of Theorem \ref{thm:main-result} in Section \ref{sec:theorem-proof}. We establish estimates for the finite element solutions $\femvec{\dot{v}}{h}{n}$, $\femvec{\dot{B}}{h}{n}$, and $\femvec{\dot{m}}{h}{n}$, obtained from the homogeneous linear system \eqref{eqn:homo-discrete-formulation-velocity}--\eqref{eqn:homo-discrete-formulation-magnetisation}. As we did in Section \ref{sec:theorem-proof}, we use the notation $\femvec{\dot{u}}{h}{n}$ to denote the finite element solution of the homogeneous linear system \eqref{eqn:homo-discrete-formulation-velocity}--\eqref{eqn:homo-discrete-formulation-magnetisation}, so that it is distinguished from the finite element solutions of the non-homogeneous linear system \eqref{eqn:discrete-velocity-formulation}--\eqref{eqn:discrete-magnetisation-formulation}.

\begin{lemma} \label{lem:homo-estimates}
Let $\varepsilon > 0$ and $n \geq 1$. Then the solutions
$(\femvec{\dot{v}}{h}{n}, \femvec{\dot{B}}{h}{n}, \femvec{\dot{m}}{h}{n})$ of
the homogeneous system \eqref{eqn:homo-discrete-formulation-velocity}--\eqref{eqn:homo-discrete-formulation-magnetisation} satisfy the estimates
\begin{align}
    &\lnorm{\femvec{\dot{v}}{h}{n}}{2}^2 + \tau \lnorm{\femvec{\dot{v}}{h}{n}}{2}^2 \leq c(\varepsilon) \tau \lnorm{\femvec{\dot{v}}{h}{n}}{2}^2+ C (\varepsilon + h^{1/2}) \tau \left(\hnorm{\femvec{\dot{v}}{h}{n}}{1}^2 + \hnorm{\femvec{\dot{B}}{h}{n}}{1}^2\right) \nonumber \\
    &\qquad \qquad \qquad \qquad \qquad \qquad + C \tau \hnorm{\femvec{\dot{m}}{h}{n}}{1}^2 + C (\varepsilon + h^{1/2}) \tau \lnorm{\Delta_h \femvec{\dot{m}}{h}{n}}{2}^2, \label{eqn:homo-estimate-v} \\[2ex]
    &\lnorm{\femvec{\dot{B}}{h}{n}}{2}^2 + \tau \hnorm{\femvec{\dot{B}}{h}{n}}{1}^2 \leq c(\varepsilon) \tau \left(\lnorm{\femvec{\dot{v}}{h}{n}}{2}^2 + \lnorm{\femvec{\dot{B}}{h }{n}}{2}^2\right) + \varepsilon \tau \hnorm{\femvec{\dot{B}}{h}{n}}{1}^2, \label{eqn:homo-estimate-B} \\[2ex]
    &\lnorm{\femvec{\dot{m}}{h}{n}}{2}^2 + \tau \hnorm{\femvec{\dot{m}}{h}{n}}{1}^2 \leq C \tau \lnorm{\femvec{\dot{B}}{h}{n}}{2}^2 + c(\varepsilon) \tau \hnorm{\femvec{\dot{m}}{h}{n}}{1}^2 + \varepsilon \tau \hnorm{\femvec{\dot{v}}{h}{n}}{1}^2 \nonumber \\
    &\qquad \qquad \qquad \qquad \qquad \qquad \qquad   + \varepsilon \tau \lnorm{\Delta_h \femvec{\dot{m}}{h}{n}}{2}^2, \label{eqn:homo-estimate-m-L2} \\[2ex]
    &\lnorm{\nabla \femvec{\dot{m}}{h}{n}}{2}^2 + \tau \lnorm{\Delta_h \femvec{\dot{m}}{h}{n}}{2}^2 \leq c(\varepsilon) \tau \left(\lnorm{\femvec{\dot{v}}{h}{n}}{2}^2 + \lnorm{\femvec{B}{h }{n}}{2}^2\right) + c(\varepsilon) \tau \hnorm{\femvec{\dot{m}}{h}{n}}{1}^2 \nonumber \\
    &\qquad \qquad \qquad \qquad \qquad \qquad \qquad \qquad + C h^{1/2} \tau \hnorm{\femvec{\dot{v}}{h}{n}}{1}^2 + C (\varepsilon + h^{1/2}) \tau \lnorm{\Delta_h \femvec{\dot{m}}{h}{n}}{2}^2, \label{eqn:homo-estimate-m-H1}
\end{align}
where the $c(\varepsilon)$ and $C$ may depend only on the exact solution and the domain $\Omega$.
\end{lemma}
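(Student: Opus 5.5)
We test each equation of the homogeneous system \eqref{eqn:homo-discrete-formulation-velocity}--\eqref{eqn:homo-discrete-formulation-magnetisation} with the unknown itself, exactly as in Lemmas \ref{lem:inductive-step-velocity}--\ref{lem:inductive-step-magnetisation-H1}. Since the right-hand sides vanish, there are no projection or truncation terms and no $\dot{\vec{u}}^{n-1}$ terms. The coefficients $\femvec{v}{h}{n-1},\femvec{B}{h}{n-1},\femvec{m}{h}{n-1}$ are controlled in $\LB^\infty\cap\W^{1,3}$ by Lemma \ref{lem:induction-assumptions-solution-estimates}; for $n=1$ they are the projections of the initial data, and \eqref{eqn:ritz-proj-estimates-stability}, \eqref{eqn:stokes-proj-estimates-stability} and \eqref{eqn:maxwell-proj-estimates-stability} give the same bound. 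Where a $h^{1/2}$ factor is needed, we write $\femvec{m}{h}{n-1}=\vec{m}^{n-1}-\femvec{\delta}{\vec{m}}{n-1}$ and use Lemma \ref{lem:induction-assumptions-error-estimates}.

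\textbf{Velocity and magnetic field.} For \eqref{eqn:homo-estimate-v}, take $\vec{\varphi}_h=\femvec{\dot{v}}{h}{n}$ and $q_h=\dot{\tilde p}_h^n$. The pressure term vanishes by \eqref{eqn:homo-discrete-formulation-div-velocity}, the two skew convective terms cancel, and the diffusion gives $\tau\|\nabla\femvec{\dot v}{h}{n}\|^2$. We then add $\tau\|\femvec{\dot v}{h}{n}\|^2$ to obtain the full $\HB^1$ norm.
\begin{itemize}
\item The Lorentz term is bounded by $\tau\|\femvec{B}{h}{n-1}\|_{\infty}\|\curl\femvec{\dot B}{h}{n}\|\,\|\femvec{\dot v}{h}{n}\|$, then by Young's inequality.
\item The $(\femvec{B}{h}{n-1}\cdot\nabla)\femvec{\dot m}{h}{n}$ term gives $C\tau\hnorm{\femvec{\dot m}{h}{n}}{1}^2$ plus an $L^2$ term.
\item For $\nabla\femvec{m}{h}{n-1}\odot\Delta_h\femvec{\dot m}{h}{n}$, split $\nabla\femvec{m}{h}{n-1}=\nabla\vec{m}^{n-1}-\nabla\femvec{\delta}{\vec m}{n-1}$. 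The smooth part gives $c(\varepsilon)\tau\|\femvec{\dot v}{h}{n}\|^2+\varepsilon\tau\|\Delta_h\femvec{\dot m}{h}{n}\|^2$. The error part is bounded via $\LB^6\times\LB^3\times\LB^2$ Hölder and \eqref{eqn:induction-estimate-linf-w13}, giving $Ch^{1/2}\tau(\hnorm{\femvec{\dot v}{h}{n}}{1}^2+\|\Delta_h\femvec{\dot m}{h}{n}\|^2)$. This is the same treatment as $I_{13}$ in Lemma \ref{lem:inductive-step-velocity}.
\end{itemize}
For \eqref{eqn:homo-estimate-B}, take $\vec\omega_h=\femvec{\dot B}{h}{n}$. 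Lemma \ref{lem:curl-estimate} turns the curl and divergence terms into $C_\Omega\tau\hnorm{\cdot}{1}^2$ (after adding $\tau\|\femvec{\dot B}{h}{n}\|^2$ if needed). The coupling term is $\le C\tau\|\femvec{\dot v}{h}{n}\|\,\hnorm{\femvec{\dot B}{h}{n}}{1}$, which Young's inequality handles.

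\textbf{Magnetisation.} For \eqref{eqn:homo-estimate-m-L2}, take $\vec\xi_h=\femvec{\dot m}{h}{n}$.
\begin{itemize}
\item The transport term is bounded by $\tau\|\femvec{\dot v}{h}{n}\|_{6}\|\nabla\femvec{m}{h}{n-1}\|_3\|\femvec{\dot m}{h}{n}\|_2$, giving $\varepsilon\tau\hnorm{\femvec{\dot v}{h}{n}}{1}^2+c(\varepsilon)\tau\hnorm{\femvec{\dot m}{h}{n}}{1}^2$.
\item The term $\femvec{m}{h}{n-1}\times\Delta_h\femvec{\dot m}{h}{n}$ gives $\varepsilon\tau\|\Delta_h\femvec{\dot m}{h}{n}\|^2+c(\varepsilon)\tau\|\femvec{\dot m}{h}{n}\|^2$.
\item The term $(\nabla\femvec{\dot m}{h}{n}\cdot\nabla\femvec{m}{h}{n-1})\femvec{m}{h}{n-1}$ is bounded with $\LB^2\times\LB^3\times\LB^6$, giving $C\tau\hnorm{\femvec{\dot m}{h}{n}}{1}^2$.
\item The $\vec B$ terms use only $\LB^\infty$ bounds.
\end{itemize}
For \eqref{eqn:homo-estimate-m-H1}, take $\vec\xi_h=-\Delta_h\femvec{\dot m}{h}{n}$ and use \eqref{eqn:discrete-laplacian} on the first two terms.
\begin{itemize}
\item The cross term with $\Delta_h\femvec{\dot m}{h}{n}$ vanishes pointwise, since $(\vec a\times\vec b)\cdot\vec b=0$.
\item For the transport term, split $\femvec{m}{h}{n-1}$ into exact part plus error, as for $I_3$ of Lemma \ref{lem:inductive-step-magnetisation-H1}. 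This produces $c(\varepsilon)\tau\|\femvec{\dot v}{h}{n}\|^2+Ch^{1/2}\tau\hnorm{\femvec{\dot v}{h}{n}}{1}^2+C(\varepsilon+h^{1/2})\tau\|\Delta_h\femvec{\dot m}{h}{n}\|^2$.
\item The term $(\nabla\femvec{\dot m}{h}{n}\cdot\nabla\femvec{m}{h}{n-1})\femvec{m}{h}{n-1}$ is split the same way. Its exact part gives $c(\varepsilon)\tau\hnorm{\femvec{\dot m}{h}{n}}{1}^2+\varepsilon\tau\|\Delta_h\femvec{\dot m}{h}{n}\|^2$. Its error part is bounded by $\|\nabla\femvec{\dot m}{h}{n}\|_6\|\nabla\femvec{\delta}{\vec m}{n-1}\|_3\|\Delta_h\femvec{\dot m}{h}{n}\|_2$, and with \eqref{eqn:discrete-laplacian-estimate} this becomes $Ch^{1/2}\tau\|\Delta_h\femvec{\dot m}{h}{n}\|^2$.
\item The $\vec B$ terms give $c(\varepsilon)\tau\|\femvec{\dot B}{h}{n}\|^2$.
\end{itemize}

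\textbf{Main obstacle.} The delicate point is every term in which $\Delta_h\femvec{\dot m}{h}{n}$ or $\hnorm{\femvec{\dot v}{h}{n}}{1}$ multiplies a gradient of the previous iterate. These terms cannot be absorbed with $O(1)$ coefficients. They are closed only by splitting the previous iterate into exact solution plus error and invoking the $h^{1/2}$ smallness of Lemma \ref{lem:induction-assumptions-error-estimates}, which in turn rests on \eqref{eqn:tau-growth}. Making sure that every such term acquires either an $\varepsilon$ or an $h^{1/2}$ factor is what the bookkeeping must guarantee, so that the combined estimate can later be absorbed for small $\varepsilon$ and $h_0$.
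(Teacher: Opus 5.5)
Your proposal is correct and follows essentially the same route as the paper: test each homogeneous equation with the unknown itself (and with $-\Delta_h \dot{\vec{m}}_h^n$ for the gradient estimate), and control the frozen coefficients via Lemma~\ref{lem:induction-assumptions-solution-estimates}. The crucial $h^{1/2}$ factors come, as in the paper, from splitting $\vec{m}_h^{n-1} = \vec{m}^{n-1} - \vec{\delta}_{\vec{m}}^{n-1}$ and using Lemma~\ref{lem:induction-assumptions-error-estimates} with \eqref{eqn:discrete-laplacian-estimate}; your $\vec{\xi}_h = -\Delta_h \dot{\vec{m}}_h^n$ has the correct sign (the paper's $+\Delta_h \dot{\vec{m}}_h^n$ only flips signs), and your handling of $n=1$ via projection stability matches the $i=0$ cases of those lemmas.
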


\begin{proof} \mbox{}

\noindent \underline{Proof of \eqref{eqn:homo-estimate-v}}. Setting $\vec{\varphi}_h = \femvec{\dot{v}}{h}{n}$ in \eqref{eqn:homo-discrete-formulation-velocity} and $q_h = \tilde{p}_h^n$ \eqref{eqn:homo-discrete-formulation-div-velocity}, and adding $ \tau \lnorm{\femvec{\dot{v}}{h}{n}}{2}^2$ to both sides, we obtain
\begin{align*}
    \lnorm{\femvec{\dot{v}}{h}{n}}{2}^2 +  \tau \hnorm{\femvec{\dot{v}}{h}{n}}{1}^2 &= -\frac{\tau}{2} \liprod{(\femvec{v}{h}{n-1} \cdot \nabla) \femvec{\dot{v}}{h}{n}}{\femvec{\dot{v}}{h}{n}} + \frac{\tau}{2} \liprod{(\femvec{v}{h}{n-1} \cdot \nabla)\femvec{\dot{v}}{h}{n}}{\femvec{\dot{v}}{h}{n}} \\
    &\qquad + \tau \liprod{\curl \femvec{\dot{B}}{h}{n} \times \femvec{B}{h}{n-1}}{\femvec{\dot{v}}{h}{n}} - \tau \liprod{(\femvec{B}{h}{n-1} \cdot \nabla)\femvec{\dot{m}}{h}{n}}{\femvec{\dot{v}}{h}{n}} \\
    &\qquad - \tau \liprod{\nabla \femvec{m}{h}{n-1} \odot \Delta_h \femvec{\dot{m}}{h}{n}}{\vec{\varphi}_h} \\
    &= \tau \liprod{\curl \femvec{\dot{B}}{h}{n} \times \femvec{B}{h}{n-1}}{\femvec{\dot{v}}{h}{n}} - \tau \liprod{(\femvec{B}{h}{n-1} \cdot \nabla)\femvec{\dot{m}}{h}{n}}{\femvec{\dot{v}}{h}{n}} \\
    &\qquad - \tau \liprod{\nabla \femvec{m}{h}{n-1} \odot \Delta_h \femvec{\dot{m}}{h}{n}}{\vec{\varphi}_h} \\
    &= I_1 + \cdots + I_3 +  \tau \lnorm{\femvec{\dot{v}}{h}{n}}{2}^2.
\end{align*}
We regularly use Hölder's inequality and Young's inequality throughout our estimates. Estimating $I_1$ using Lemma \ref{lem:induction-assumptions-solution-estimates} we have
\begin{align*}
    |I_1| &:= \left|\tau \liprod{\curl \femvec{\dot{B}}{h}{n} \times \femvec{B}{h}{n-1}}{\femvec{\dot{v}}{h}{n}}\right| \\
    &\leq \tau \lnorm{\femvec{B}{h}{n-1}}{\infty} \lnorm{\femvec{\dot{v}}{h}{n}}{2} \lnorm{\curl \femvec{\dot{B}}{h}{n}}{2} \\
    &\leq C \tau \lnorm{\femvec{\dot{v}}{h}{n}}{2} \hnorm{\femvec{\dot{B}}{h}{n}}{1} \\
    &\leq c(\varepsilon) \tau \lnorm{\femvec{\dot{v}}{h}{n}}{2}^2 + \varepsilon \tau \hnorm{\femvec{\dot{B}}{h}{n}}{1}^2,
\end{align*}
and similarly,
\begin{align*}
    |I_2| &\leq C \tau \lnorm{\femvec{\dot{v}}{h}{n}}{2}^2 + C \tau \hnorm{\femvec{\dot{m}}{h}{n}}{1}^2.
\end{align*}
The inner product $I_3$ requires greater care and we estimate it as such using the Sobolev embedding $\HB^1(\Omega) \hookrightarrow \LB^6(\Omega)$ and \eqref{eqn:induction-estimate-linf-w13}
\begin{align*}
    |I_3| &:= \left|\tau \liprod{\nabla \femvec{m}{h}{n-1} \odot \Delta_h \femvec{\dot{m}}{h}{n}}{\vec{\varphi}_h}\right|\\
    &\leq \tau \left|\liprod{\nabla \vec{m}^{n-1} \odot \Delta_h \femvec{\dot{m}}{h}{n}}{\femvec{\dot{v}}{h}{n}}\right| + \tau \left|\liprod{\nabla \femvec{\delta}{\vec{m}}{n-1} \odot \Delta_h \femvec{\dot{m}}{h}{n}}{\femvec{\dot{v}}{h}{n}}\right| \\
    &\leq \tau \lnorm{\nabla \vec{m}^{n-1}}{\infty} \lnorm{\femvec{\dot{v}}{h}{n}}{2} \lnorm{\Delta_h \femvec{\dot{m}}{h}{n}}{2} + \tau \lnorm{\femvec{\dot{v}}{h}{n}}{6} \lnorm{\nabla \femvec{\delta}{\vec{m}}{n-1}}{3} \lnorm{\Delta_h \femvec{\dot{m}}{h}{n}}{2} \\
    &\leq C \tau \lnorm{\femvec{\dot{v}}{h}{n}}{2} \lnorm{\Delta_h \femvec{\dot{m}}{h}{n}}{2} + C h^{1/2} \tau \hnorm{\femvec{\dot{v}}{h}{n}}{1} \lnorm{\Delta_h \femvec{\dot{m}}{h}{n}}{2} \\
    &\leq c(\varepsilon) \tau \lnorm{\femvec{\dot{v}}{h}{n}}{2}^2 + C h^{1/2} \tau \hnorm{\femvec{\dot{v}}{h}{n}}{1}^2 + C (\varepsilon + h^{1/2}) \tau \lnorm{\Delta_h \femvec{\dot{m}}{h}{n}}{2}^2.
\end{align*}
The result is proved by summing up these estimates. \\

\noindent \underline{Proof of \eqref{eqn:homo-estimate-B}}. Setting $\vec{\omega}_h = \femvec{\dot{B}}{h}{n}$ in \eqref{eqn:homo-discrete-formulation-magnetic-field}, adding $ \tau \lnorm{\femvec{\dot{B}}{h}{n}}{2}^2$ to both sides and using Lemma \ref{lem:curl-estimate} we have
\begin{align*}
    \lnorm{\femvec{\dot{B}}{h}{n}}{2}^2 + \tau \hnorm{\femvec{\dot{B}}{h}{n}}{1}^2 &\leq C \tau \left|\liprod{\femvec{\dot{v}}{h}{n} \times \femvec{B}{h}{n-1}}{\curl \femvec{\dot{B}}{h}{n}}\right| + C \tau \lnorm{\femvec{\dot{B}}{h}{n}}{2}^2\\
    &= I_1 + C \tau \lnorm{\femvec{\dot{B}}{h}{n}}{2}^2.
\end{align*}
Using Hölder's inequality, Young's inequality and Lemma \ref{lem:induction-assumptions-solution-estimates} we have
\begin{align*}
    |I_1| &\leq \tau \lnorm{\femvec{B}{h}{n-1}}{\infty} \lnorm{\femvec{\dot{v}}{h}{n}}{2} \lnorm{\curl \femvec{\dot{B}}{h}{n}}{2} \\
    &\leq C \tau \lnorm{\femvec{\dot{v}}{h}{n}}{2} \hnorm{\femvec{\dot{B}}{h}{n}}{1} \\
    &\leq c(\varepsilon) \tau \lnorm{\femvec{\dot{v}}{h}{n}}{2}^2 + \varepsilon
    \tau \hnorm{\femvec{\dot{B}}{h}{n}}{1}^2,
\end{align*}
which implies \eqref{eqn:homo-estimate-B}.  \\

\noindent \underline{Proof of \eqref{eqn:homo-estimate-m-L2}}. Setting $\vec{\xi}_h = \femvec{\dot{m}}{h}{n}$ in \eqref{eqn:homo-discrete-formulation-magnetisation} and adding $ \tau \lnorm{\femvec{\dot{m}}{h}{n}}{2}^2$ to both sides, we obtain
\begin{align*}
    \lnorm{\femvec{\dot{m}}{h}{n}}{2}^2 +  \tau \hnorm{\femvec{\dot{m}}{h}{n}}{1}^2 &= -\tau \liprod{(\femvec{\dot{v}}{h}{n} \cdot \nabla)\femvec{m}{h}{n-1}}{\femvec{\dot{m}}{h}{n}} - \tau \liprod{\femvec{m}{h}{n-1} \times \Delta_h \femvec{\dot{m}}{h}{n}}{\femvec{\dot{m}}{h}{n}} \\
    &\qquad + \tau \liprod{(\nabla \femvec{\dot{m}}{h}{n} \cdot \nabla \femvec{m}{h}{n-1})\femvec{m}{h}{n-1}}{\femvec{\dot{m}}{h}{n}} + \tau \liprod{\femvec{m}{h}{n-1} \times \femvec{\dot{B}}{h}{n}}{\femvec{\dot{m}}{h}{n}} \\
    &\qquad + \tau \liprod{\femvec{m}{h}{n-1} \times (\femvec{m}{h}{n-1} \times \femvec{\dot{B}}{h}{n})}{\femvec{\dot{m}}{h}{n}} \\
    &= I_1 + \cdots + I_5 +  \tau \lnorm{\femvec{\dot{m}}{h}{n}}{2}^2.
\end{align*}
We regularly use Hölder's inequality and Young's inequality throughout our estimates. To estimate $I_1$, we use the Sobolev embedding $\HB^1(\Omega) \hookrightarrow \LB^6(\Omega)$ and Lemma \ref{lem:induction-assumptions-solution-estimates} to obtain
\begin{align*}
    |I_1| &:= \left|\tau \liprod{(\femvec{\dot{v}}{h}{n} \cdot \nabla)\femvec{m}{h}{n-1}}{\femvec{\dot{m}}{h}{n}}\right| \\
    &\leq \tau \lnorm{\femvec{\dot{v}}{h}{n}}{6} \lnorm{\nabla \femvec{m}{h}{n-1}}{3} \lnorm{\femvec{\dot{m}}{h}{n}}{2} \\
    &\leq C \tau \lnorm{\femvec{\dot{m}}{h}{n}}{2} \hnorm{\femvec{\dot{v}}{h}{n}}{1} \\
    &\leq c(\varepsilon) \tau \lnorm{\femvec{\dot{m}}{h}{n}}{2}^2 + \varepsilon \tau \hnorm{\femvec{\dot{v}}{h}{n}}{1}^2.
\end{align*}
Similarly, we obtain
\begin{align*}
    |I_2| &\leq c(\varepsilon) \tau \lnorm{\femvec{\dot{m}}{h}{n}}{2}^2 + \varepsilon \tau \lnorm{\Delta_h \femvec{\dot{m}}{h}{n}}{2}^2,
\end{align*}
and
\begin{align*}
    |I_3| &:= \left|\tau \liprod{\femvec{m}{h}{n-1} \times (\femvec{m}{h}{n-1} \times \femvec{\dot{B}}{h}{n})}{\femvec{\dot{m}}{h}{n}}\right| \\
    &\leq C \tau \lnorm{\femvec{m}{h}{n-1}}{\infty} \lnorm{\femvec{\dot{m}}{h}{n}}{6} \lnorm{\nabla \femvec{m}{h}{n-1}}{3} \lnorm{\nabla \femvec{\dot{m}}{h}{n}}{2} \\
    &\leq C \tau \hnorm{\femvec{\dot{m}}{h}{n}}{1}^2. 
\end{align*}
Lastly, it is easy to see that
\begin{align*}
    |I_4| &\leq C \tau \lnorm{\femvec{\dot{B}}{h}{n}}{2}^2 + C \tau \lnorm{\femvec{\dot{m}}{h}{n}}{2}^2, \\
    |I_5| &\leq C \tau \lnorm{\femvec{\dot{B}}{h}{n}}{2}^2 + C \tau \lnorm{\femvec{\dot{m}}{h}{n}}{2}^2.
\end{align*}
The result is proved by summing up these estimates. \\

\noindent \underline{Proof of \eqref{eqn:homo-estimate-m-H1}}. Setting $\vec{\xi}_h = \Delta_h \femvec{\dot{m}}{h}{n}$ in \eqref{eqn:homo-discrete-formulation-magnetisation} we obtain
\begin{align*}
    &\lnorm{\nabla \femvec{\dot{m}}{h}{n}}{2}^2 +  \tau \lnorm{\Delta_h \femvec{\dot{m}}{h}{n}}{2}^2 \\
    &\qquad = -\tau \liprod{(\femvec{\dot{v}}{h}{n} \cdot \nabla)\femvec{m}{h}{n-1}}{\Delta_h \femvec{\dot{m}}{h}{n}} - \tau \liprod{\femvec{m}{h}{n-1} \times \Delta_h \femvec{\dot{m}}{h}{n}}{\Delta_h \femvec{\dot{m}}{h}{n}} \\
    &\qquad \qquad + \tau \liprod{(\nabla \femvec{\dot{m}}{h}{n} \cdot \nabla \femvec{m}{h}{n-1})\femvec{m}{h}{n-1}}{\Delta_h \femvec{\dot{m}}{h}{n}} + \tau \liprod{\femvec{m}{h}{n-1} \times \femvec{\dot{B}}{h}{n}}{\Delta_h \femvec{\dot{m}}{h}{n}} \\
    &\qquad \qquad + \tau \liprod{\femvec{m}{h}{n-1} \times (\femvec{m}{h}{n-1} \times \femvec{\dot{B}}{h}{n})}{\Delta_h \femvec{\dot{m}}{h}{n}} \\
    &\qquad = I_1 + \cdots + I_5.
\end{align*}
We regularly use Hölder's inequality and Young's inequality throughout our estimates. To estimate $I_1$, we use the Sobolev embedding $\HB^1(\Omega) \hookrightarrow \LB^6(\Omega)$ and Lemma \ref{lem:induction-assumptions-error-estimates} to obtain
\begin{align*}
    |I_1| &:= \left|\tau \liprod{(\femvec{\dot{v}}{h}{n} \cdot \nabla)\femvec{m}{h}{n-1}}{\Delta_h \femvec{\dot{m}}{h}{n}}\right| \\
    &\leq \left|\tau \liprod{(\femvec{\dot{v}}{h}{n} \cdot \nabla)\femvec{\delta}{\vec{m}}{n-1}}{\Delta_h \femvec{\dot{m}}{h}{n}}\right| + \left|\tau \liprod{(\femvec{\dot{v}}{h}{n} \cdot \nabla)\vec{m}^n}{\Delta_h \femvec{\dot{m}}{h}{n}}\right| \\
    &\leq \tau \lnorm{\femvec{\dot{v}}{h}{n}}{6} \lnorm{\nabla \femvec{\delta}{\vec{m}}{n-1}}{3} \lnorm{\Delta_h \femvec{\dot{m}}{h}{n}}{2} + \tau \lnorm{\nabla \vec{m}^{n-1}}{\infty} \lnorm{\femvec{\dot{v}}{h}{n}}{2} \lnorm{\Delta_h \femvec{\dot{m}}{h}{n}}{2} \\
    &\leq C h^{1/2} \tau \hnorm{\femvec{\dot{v}}{h}{n}}{1} \lnorm{\Delta_h \femvec{\dot{m}}{h}{n}}{2} + C \tau \lnorm{\femvec{\dot{v}}{h}{n}}{2} \lnorm{\Delta_h \femvec{\dot{m}}{h}{n}}{2} \\
    &\leq c(\varepsilon) \tau \lnorm{\femvec{\dot{v}}{h}{n}}{2}^2 + C h^{1/2} \tau \hnorm{\femvec{\dot{v}}{h}{n}}{1}^2 + C (\varepsilon + h^{1/2})
     \tau \lnorm{\Delta_h \femvec{\dot{m}}{h}{n}}{2}^2.
\end{align*}
We now estimate $I_3$ since $I_2 = 0$. Using \eqref{eqn:discrete-laplacian-estimate} and the same Sobolev embedding as for $I_1$ we have
\begin{align*}
    |I_3| &:= \left|\tau \liprod{(\nabla \femvec{\dot{m}}{h}{n} \cdot \nabla \femvec{m}{h}{n-1})\femvec{m}{h}{n-1}}{\Delta_h \femvec{\dot{m}}{h}{n}}\right| \\
    &\leq \left|\tau \liprod{(\nabla \femvec{\dot{m}}{h}{n} \cdot \nabla \femvec{\delta}{\vec{m}}{n-1})\femvec{m}{h}{n-1}}{\Delta_h \femvec{\dot{m}}{h}{n}}\right| + \left|\tau \liprod{(\nabla \femvec{\dot{m}}{h}{n} \cdot \nabla \vec{m}^{n-1})\femvec{m}{h}{n-1}}{\Delta_h \femvec{\dot{m}}{h}{n}}\right| \\
    &\leq  \tau \lnorm{\femvec{m}{h}{n-1}}{\infty} \lnorm{\nabla \femvec{\dot{m}}{h}{n}}{6} \lnorm{\nabla \femvec{\delta}{\vec{m}}{n-1}}{3} \lnorm{\Delta_h \femvec{\dot{m}}{h}{n}}{2} \\
    &\qquad +  \tau \lnorm{\femvec{m}{h}{n-1}}{\infty} \lnorm{\nabla \vec{m}^{n-1}}{\infty} \lnorm{\nabla \femvec{\dot{m}}{h}{n}}{2} \lnorm{\Delta_h \femvec{\dot{m}}{h}{n}}{2} \\
    &\leq C h^{1/2} \tau \lnorm{\Delta_h \femvec{\dot{m}}{h}{n}}{2}^2 + C \tau \hnorm{\femvec{\dot{m}}{h}{n}}{1} \lnorm{\Delta_h \femvec{\dot{m}}{h}{n}}{2} \\
    &\leq c(\varepsilon) \tau \hnorm{\femvec{\dot{m}}{h}{n}}{1}^2 + C (\varepsilon + h^{1/2}) \tau \lnorm{\Delta_h \femvec{\dot{m}}{h}{n}}{2}^2.
\end{align*}
The estimate for $I_4$ and $I_5$ follow easily so that we have
\begin{align*}
    |I_4| &\leq c(\varepsilon) \tau \lnorm{\femvec{\dot{B}}{h}{n}}{2}^2 + \varepsilon \tau \lnorm{\Delta_h \femvec{\dot{m}}{h}{n}}{2}^2, \\
    |I_5| &\leq c(\varepsilon) \tau \lnorm{\femvec{\dot{B}}{h}{n}}{2}^2 + C \varepsilon \tau \lnorm{\Delta_h \femvec{\dot{m}}{h}{n}}{2}^2.
\end{align*}
The result is proved by summing up the estimates.
\end{proof}

\section*{Acknowledgements}

The first author is supported by the Australian Government's Research Training Program Scholarship awarded at the University of New South Wales, Sydney. The second author is partially supported by the Australian Research Council under grant number DP220101811.

The first author would like to acknowledge that numerical experiments were performed on the UNSW Compute Cluster Katana (DOI: 10.26190/669X-A286), and would like to thank Michael Ndjinga at Université Paris-Saclay for his insightful comments that helped identify and resolve a bug in the code for our experiments. Additionally, he would also like to thank ministry apprentice Jaison Jacob for the many conversations that have kept him encouraged while pursuing research.

\bibliographystyle{abbrv}
\bibliography{Bibliography2}

\end{document}